\documentclass[12pt]{amsart}
\usepackage[margin=1in]{geometry}
\usepackage{amsmath}
\usepackage{amsthm}
\usepackage{amsbsy}
\usepackage{amssymb}
\usepackage{hyperref}
\usepackage{amsfonts}
\usepackage{color}
\usepackage{constants}

\usepackage{enumitem}
\numberwithin{equation}{section}
\newtheorem{theorem}{Theorem}[section]
\newtheorem{lemma}[theorem]{Lemma}
\newtheorem{proposition}[theorem]{Proposition}
\newtheorem{corollary}[theorem]{Corollary}

\newtheorem*{hypothesis*}{The generalized Ramanujan conjecture}
\newtheorem*{definition*}{Definition}

\theoremstyle{remark}
\newtheorem*{remark}{Remark}

\theoremstyle{remark}
\newtheorem*{remark1}{Remarks}

\theoremstyle{remark}

\newcommand{\Z}{\mathbb{Z}}
\newcommand{\R}{\mathbb{R}}
\newcommand{\Q}{\mathbb{Q}}

\newcommand{\p}{\mathfrak{p}}

\renewcommand{\v}{\mathfrak{v}}

\newcommand{\re}{\textup{Re}}
\newcommand{\im}{\textup{Im}}

\renewcommand{\a}{\mathfrak{a}}

\newcommand{\q}{\mathfrak{q}}

\renewcommand{\pmod}[1]{\left(\mathrm{mod}\,\,#1\right)}

\newconstantfamily{abcon}{symbol=c}

\title[Effective log-free zero density estimates]{Effective log-free zero density estimates for automorphic $L$-functions and the Sato-Tate conjecture}
\author{Robert J. Lemke Oliver}
\address{Department of Mathematics, Tufts University, Medford, MA 02155}
\email{robert.lemke\_oliver@tufts.edu}
\author{Jesse Thorner}
\address{Department of Mathematics, Stanford University, Stanford, CA 94305}
\email{jthorner@stanford.edu}
\date{\today}
\thanks{The first author was supported by an NSF Mathematical Sciences Postdoctoral Research Fellowship and is presently supported by NSF grant DMS-1601398.  The second author is supported by an NSF Mathematical Sciences Postdoctoral Research Fellowship.}
\begin{document}

\begin{abstract}
Let $K/\Q$ be a number field.  Let $\pi$ and $\pi^\prime$ be cuspidal automorphic representations of $\textup{GL}_d(\mathbb{A}_K)$ and $\textup{GL}_{d^\prime}(\mathbb{A}_K)$, and suppose that either both $d$ and $d'$ are at most 2 or at least one of $\pi$ and $\pi^\prime$ is self-dual.  When $d=d^\prime=2$, we prove an unconditional and effective log-free zero density estimate for the Rankin-Selberg $L$-function $L(s,\pi\otimes\pi^\prime,K)$.  For other choices of $d$ and $d^\prime$, we obtain similar results assuming that either $\pi$ or $\pi^\prime$ satisfies the generalized Ramanujan conjecture.  With these density estimates, we make effective the Hoheisel phenomenon of Moreno regarding primes in short intervals and extend it to the context of the Sato-Tate conjecture; additionally, we bound the least prime in the Sato-Tate conjecture in analogy with Linnik's theorem on the least prime in an arithmetic progression.  When $K=\Q$, we also prove an effective log-free density estimate for $L(s,\pi\otimes\pi^\prime,\Q)$ averaged over twists by Dirichlet characters.  With this second density estimate, we prove an averaged form of the prime number theorem in short intervals for $L(s,\pi\otimes\tilde{\pi},\Q)$ when $\pi$ is a cuspidal automorphic representation of $\textup{GL}_2(\mathbb{A}_{\Q})$.
\end{abstract}

\maketitle

\section{Introduction and statement of results}
\label{sec:Section1}

The classical prime number theorem asserts that
\[
\sum_{n\leq x} \Lambda(n) \sim x,
\]
where $\Lambda(n)$ is the von Mangoldt function.  Depending on the quality of the error term, it is possible to deduce from this a prime number theorem for short intervals, in the form
\begin{equation}
\label{eqn:pnt_short}
\sum_{x<n\leq x+h} \Lambda(n) \sim h,
\end{equation}
provided that $h$ is not too small; with the presently best known error terms, we may take $h$ a bit smaller than $x$ divided by any power of $\log x$, but not as small as $x^{1-\delta}$ for any $\delta>0$.  Improving the error bound in the prime number theorem to allow for $h$ to be of size $x^{1-\delta}$ is a monumentally hard task, known as the quasi-Riemann hypothesis, and amounts to showing that there are no zeros of the Riemann zeta function $\zeta(s)$ in the region $\Re(s) >1-\delta$. Nevertheless, in 1930, Hoheisel \cite{Hoheisel} made the remarkable observation that, with Littlewood's improved zero-free region for $\zeta(s)$, if there are simply \emph{not too many} zeros in this region, then one can deduce \eqref{eqn:pnt_short} with $h = x^{1-\delta}$.  In particular, it turns out that
\begin{equation}
\label{eqn:hoheisel_zerodensity}
N(\sigma,T) := \# \{ \rho=\beta+i\gamma\colon\zeta(\rho)=0, \beta \geq \sigma, |\gamma|\leq T \} \ll T^{c(1-\sigma)}(\log T)^{c'},
\end{equation}
where $c>2$ and $c'>0$ are absolute constants; this is a so-called {\bf zero density estimate}. (In this section, $c$ and $c'$ will always denote positive absolute constants, though they may represent different values in each occurrence.)  Recall that there are about $\frac{T}{\pi} \log \frac{T}{2\pi e}$ zeros of $\zeta(s)$ with $|\gamma|\leq T$, so that a vanishingly small proportion of zeros have real part close to $1$.  An explicit version of \eqref{eqn:hoheisel_zerodensity} enabled Hoheisel to prove the prime number theorem in short intervals \eqref{eqn:pnt_short} for $h=x^{1-\delta}$ in the range $0\leq \delta \leq 1/33000$; it is now known that we may take $0\leq \delta\leq\frac{5}{12}$, due to Huxley \cite{Huxley} and Heath-Brown \cite{Heath-Brown1988}.

Another classical problem in analytic number theory is to determine the least prime in an arithmetic progression $a\pmod{q}$ with $(a,q)=1$.  Linnik \cite{Linnik} was able to show that the least such prime is no bigger than $q^A$, where $A$ is an absolute constant; the best known value of $A$ is 5, due to Xylouris \cite{Xylouris} in his Ph.D. thesis.  Modern treatments of Linnik's theorem typically use a simplification due to Fogels \cite{Fogels}, which involves proving a more general version of \eqref{eqn:hoheisel_zerodensity} for Dirichlet $L$-functions $L(s,\chi)$.  Specifically, if we define
\[
N_\chi(\sigma,T) := \#\{ \rho=\beta+i\gamma\colon L(\rho,\chi)=0, \beta\geq \sigma, \text{ and } |\gamma|\leq T\},
\]
then Fogels showed that
\begin{equation}
\label{eqn:fogels}
\sum_{\chi\pmod{q}} N_\chi(\sigma,T) \ll T^{c(1-\sigma)}
\end{equation}
when $T\geq q$.  Due to the absence of a $\log T$ term as compared to \eqref{eqn:hoheisel_zerodensity}, it is standard to call such a result a {\bf log-free zero density estimate}.  In this paper, we are interested in analogous log-free zero density estimates for automorphic $L$-functions and their arithmetic applications, specifically to analogues of Hoheisel's and Linnik's theorems.  

We consider the following general setup.  Let $K/\Q$ be a number field with ring of adeles $\mathbb{A}_K$, and let $\mathcal{A}_d(K)$ denote the set of all cuspidal automorphic representations of $\mathrm{GL}_d(\mathbb{A}_K)$ with unitary central character. If $\pi\in\mathcal{A}_d(K)$, then there is an $L$-function $L(s,\pi,K)$ attached to $\pi$ whose  Dirichlet series and Euler product are given by
\begin{equation*}
\label{auto-L-func}
L(s,\pi,K)=\sum_{\a}\frac{\lambda_{\pi}(\a)}{\mathrm{N}\a^s}=\prod_{\p}\prod_{j=1}^d(1-\alpha_{\pi}(j,\p)\mathrm{N}\p^{-s})^{-1},
\end{equation*}
where the sum runs over the non-zero integral ideals of $K$, the product runs over the prime ideals of $K$, and $\mathrm{N}\a=\mathrm{N}_{K/\mathbb{Q}}\a$ denotes the norm of the ideal $\a$.

Let $\pi\in\mathcal{A}_d(K)$ and $\pi^\prime\in\mathcal{A}_{d^\prime}(K)$.  The Rankin-Selberg convolution
\begin{equation*}
L(s,\pi\otimes\pi',K)=\sum_{\a}\frac{\lambda_{\pi\otimes\pi'}(\a)}{\mathrm{N}\a^s}=\prod_{\p}\prod_{j_1=1}^{d} \prod_{j_2=1}^{d'}(1-\alpha_{\pi}(j_1,\p)\alpha_{\pi'}(j_2,\p)\mathrm{N}\p^{-s})^{-1}
\end{equation*}
is itself an $L$-function with an analytic continuation and a functional equation.  Define $\Lambda_{\pi\otimes\pi'}(\mathfrak{a})$ by the Dirichlet series identity
\begin{equation*}
-\frac{L^\prime}{L}(s,{\pi\otimes\pi'},K) = \sum_{\mathfrak{a}} \frac{\Lambda_{\pi\otimes\pi'}(\mathfrak{a})}{\mathrm{N} \mathfrak{a}^{s}}.
\end{equation*}
If $\tilde{\pi}$ is the representation which is contragredient to $\pi$, then it follows from standard Rankin-Selberg theory and the Wiener-Ikehara Tauberian theorem that we have a prime number theorem for $L(s,\pi\otimes\tilde{\pi},K)$ in the form
\[
\sum_{\mathrm{N}\a \leq x} \Lambda_{\pi\otimes\tilde{\pi}}(\a) \sim x.
\]

It is reasonable to expect (for example, it follows from the generalized Riemann hypothesis) that there is some small $\delta>0$ such that for $x$ sufficiently large and any $h\geq x^{1-\delta}$, we have
\begin{equation}
\label{eqn:auto_hoheisel}
\sum_{x<\mathrm{N}\a\leq x+h}\Lambda_{\pi\otimes\tilde{\pi}}(\a) \sim h.
\end{equation}
Unfortunately, a uniform analogue of Littlewood's improved zero-free region does not yet exist for all automorphic $L$-functions, so it seems that \eqref{eqn:auto_hoheisel} is currently inaccessible except in special situations.  However, it follows from the work of Moreno \cite{Moreno} that if $L(s,\pi\otimes\tilde{\pi},K)$ has a ``standard'' zero-free region (one of a quality similar to Hadamard's and de la Vall\'ee Poussin's for $\zeta(s)$, see Lemma \ref{1.9}), and if there is a log-free zero density estimate of the form
\begin{equation*}
\label{eqn:Npichi}
N_{\pi\otimes\pi^\prime}(\sigma,T):=\#\{\rho=\beta+i\gamma\colon L(\rho,\pi\otimes\pi^\prime,K)=0,\beta\geq \sigma,|\gamma|\leq T\}\ll T^{c_{\pi,\pi'}(1-\sigma)}
\end{equation*}
for $L(s,\pi\otimes\tilde{\pi},K)$, then for any $0<\delta<1/c_{\pi,\tilde{\pi}}$ and any $h\geq x^{1-\delta}$, one has
\begin{equation}
\label{eqn:twisted_PNT_pi}
\sum_{x<\mathrm{N}\a\leq x+h}\Lambda_{\pi\otimes\tilde{\pi}}(\a)\gg h,
\end{equation}
which Moreno called the {\bf Hoheisel phenomenon}.    However, at the time of Moreno's work, such log-free zero density estimates only existed in special cases.  Moreover, in general, it is only known that $L(s,\pi\otimes\tilde{\pi},K)$ has a standard zero-free region if $\pi$ is self-dual.  

Recall that $\pi\in\mathcal{A}_d(K)$ and $\pi^\prime\in\mathcal{A}_{d^\prime}(K)$.  Suppose that $K=\mathbb{Q}$ and that either $d$ and $d'$ are both at most 2 or that one of $\pi$ and $\pi'$ is self-dual.  Building on the work of Fogels, Akbary and Trudgian \cite{AT} proved in this case that if one has a certain amount of control over the Dirichlet coefficients of $L(s,\pi\otimes\tilde{\pi},\Q)$ and $L(s,\pi^\prime\otimes\tilde{\pi}^\prime,\Q)$ in short intervals (see Hypothesis 1.1 of \cite{AT}) and $T$ is sufficiently large in terms of $\pi$ and $\pi'$, then
\[
N_{\pi\otimes\pi'}(\sigma,T)\leq T^{c_{d,d'}(1-\sigma)},
\]
where $c_{d,d'}>2$ is a constant depending on $d$ and $d'$.  This allowed them to prove a variant of the Hoheisel phenomenon for $L(s,\pi\otimes\tilde{\pi},\Q)$ when $\pi$ is self-dual.  Unfortunately, the constant $c_{d,d'}$ was not made effective, whence also the length of the interval in their variant of the Hoheisel phenomenon.  This makes their result difficult to use in situations where uniformity in parameters over several $L$-functions is required, especially when the $L$-functions in question vary in degree.  Furthermore, the range of $T$ for which their bound holds is also not made effective.  This is necessary to obtain analogues of Linnik's theorem.

Effective log-free zero density estimates have been proven for certain natural families of $L$-functions.  Weiss \cite{weiss} proved an effective analogue of \eqref{eqn:fogels} for the Hecke $L$-functions of ray class characters, which enabled him to access prime ideals of $K$ satisfying splitting conditions in a finite Galois extension $M/K$.  Additionally, Kowalski and Michel \cite{KM} obtained a log-free zero density estimate for $L$-functions associated to any family of automorphic representations of $\mathrm{GL}_d(\mathbb{A}_{\Q})$ satisfying certain conditions, including the generalized Ramanujan conjecture (see Hypothesis \ref{GRC}).  Their result works best when $T$ is essentially constant, which is useful for variants of Linnik's theorem but not for the Hoheisel phenomenon.

We prove several log-free zero density estimates for Rankin-Selberg $L$-functions $L(s,\pi\otimes\pi',K)$ with effective dependence on $\pi$, $\pi'$, and $K$.  This dependence is most naturally stated in terms of the analytic conductors $\q(\pi)$ and $\q(\pi^\prime)$ of $\pi$ and $\pi'$, respectively (see \eqref{eqn:analytic_conductor}).

\begin{theorem}
\label{main-theorem}
Let $K$ be a number field with absolute discriminant $D_K$.  Let $\pi\in\mathcal{A}_d(K)$ and $\pi'\in\mathcal{A}_{d^\prime}(K)$.  Suppose either that at least one of $d$ and $d'$ equals one or that at least one of $\pi$ and $\pi'$ is self-dual, and suppose that $\pi'$ satisfies the generalized Ramanujan conjecture (GRC).  Let $\mathcal{Q}=\mathcal{Q}(\pi,\pi^\prime,K)$ be defined by
\[
\mathcal{Q}=\begin{cases}
\max\{\q(\pi),D_K[K:\Q]^{[K:\Q]}\}&\mbox{if $\pi^\prime$ is the trivial representation of $\mathrm{GL}_1(\mathbb{A}_K)$},\\
\max\{\q(\pi)\q(\pi^\prime),D_K[K:\Q]^{[K:\Q]}\}&\mbox{otherwise},
\end{cases}
\]
let
\[
\mathcal{D}=\mathcal{D}(\pi,\pi')=\begin{cases}
d^2&\mbox{if $d=d'$ and both $\pi$ and $\pi'$ are self-dual},\\
d^4&\mbox{if $\pi^\prime$ is the trivial representation of $\mathrm{GL}_1(\mathbb{A}_K)$},\\
(d+d')^4&\mbox{otherwise,}
\end{cases}
\]
and let $T\geq1$.  There exists an absolute constant $\Cl[abcon]{c1}>0$ such that if $\frac{1}{2}\leq\sigma\leq 1$, then\footnote{Unless mentioned otherwise, the implied constant in an asymptotic inequality is absolute and computable.}
\[
N_{\pi\otimes\pi'}(\sigma,T)\ll (d')^2 (\mathcal{Q}T^{[K:\Q]})^{\Cr{c1} \mathcal{D}(1-\sigma)}.
\]
\end{theorem}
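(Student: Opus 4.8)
\emph{Strategy.} The plan is to adapt the classical log-free density machinery of Fogels, Jutila, and Montgomery --- in the automorphic form developed by Kowalski--Michel, Akbary--Trudgian, and (over number fields) Weiss --- to the Rankin--Selberg $L$-function $L(s,\pi\otimes\pi',K)$, keeping every constant explicit. The standing inputs are: the analytic continuation and functional equation of $L(s,\pi\otimes\pi',K)$; the bound $\q(\pi\otimes\pi')\ll\mathcal{Q}^{O(d+d')}$ for its analytic conductor, from the Bushnell--Henniart conductor estimate together with control of the archimedean factors; the convexity bound for $L$ on the critical line; an effective bound toward the Ramanujan conjecture for $\pi$ (Luo--Rudnick--Sarnak in general, Kim--Sarnak when $d=2$) together with GRC for $\pi'$; and an effective mean-square bound $\sum_{\mathrm{N}\a\le x}|\lambda_{\pi\otimes\pi'}(\a)|^2\ll(d')^2\,x(\log x)^{O(1)}$, which follows from $\lambda_{\pi\otimes\pi'}(\p)=\lambda_\pi(\p)\lambda_{\pi'}(\p)$ and GRC for $\pi'$ and which is the source of the factor $(d')^2$ in the conclusion. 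This last bound, and the effective zero-free region used below, are exactly where the hypothesis ``$d=1$ or one of $\pi,\pi'$ is self-dual'' enters: it forces $L(s,\pi\otimes\tilde\pi,K)$, $L(s,\pi'\otimes\tilde\pi',K)$, and the auxiliary $L$-functions controlling $|\lambda_{\pi\otimes\pi'}|^2$ to be products of $L$-functions with meromorphic continuation, a pole of known order at $s=1$, and logarithmic derivatives with non-negative Dirichlet coefficients.

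\emph{Splitting the range.} Write $W=\mathcal{Q}T^{[K:\Q]}$. When $1-\sigma$ exceeds a suitable absolute multiple of $1/\mathcal{D}$, the claim follows from the trivial count $\ll dd'[K:\Q]\,T\log(\q(\pi\otimes\pi')(2+T))$ of all zeros with $|\gamma|\le T$, which is $\ll W^{\Cr{c1}\mathcal{D}(1-\sigma)}$ once $\Cr{c1}$ is large (using $\q(\pi\otimes\pi')\ll\mathcal{Q}^{O(d+d')}$ and $\mathcal{Q}\ge\q(\pi)\q(\pi')$ --- or $\q(\pi)$ in the $\mathrm{GL}_1$ case --- and $\mathcal{Q}\ge D_K[K:\Q]^{[K:\Q]}$). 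When $1-\sigma$ is below a suitable small multiple of $1/(\mathcal{D}\log W)$, it follows from an effective standard-quality zero-free region for $L(s,\pi\otimes\pi',K)$ with at most $O((d')^2)$ exceptional real zeros --- established by exploiting the non-negativity of the coefficients of $-\tfrac{L'}{L}(s,\pi\otimes\tilde\pi,K)$ to control a possible Landau--Siegel zero --- so that near $\sigma=1$ one has $N_{\pi\otimes\pi'}(\sigma,T)\ll(d')^2\le(d')^2 W^{\Cr{c1}\mathcal{D}(1-\sigma)}$. It remains to treat the intermediate range, roughly $1/(\mathcal{D}\log W)\ll 1-\sigma\ll 1/\mathcal{D}$, by a genuine density argument.

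\emph{Zero detection and counting.} For a zero $\rho=\beta+i\gamma$ with $\beta\ge\sigma$, $|\gamma|\le T$, let $M_X(s)=\sum_{\mathrm{N}\a\le X}\mu_{\pi\otimes\pi'}(\a)\mathrm{N}\a^{-s}$, so that $L(s,\pi\otimes\pi',K)M_X(s)=1+\sum_{\mathrm{N}\a>X}b_\a\mathrm{N}\a^{-s}$. Evaluating $\frac{1}{2\pi i}\int_{(2)}L(\rho+w,\pi\otimes\pi',K)M_X(\rho+w)\Gamma(w)Y^w\,dw$ by expanding the Dirichlet series gives $e^{-1/Y}+\sum_{\mathrm{N}\a>X}b_\a e^{-\mathrm{N}\a/Y}\mathrm{N}\a^{-\rho}$; shifting the contour to $\Re(w)=\tfrac12-\beta$ produces no residue at $w=0$ (since $L(\rho,\pi\otimes\pi',K)=0$) and leaves a critical-line integral controlled, via the convexity bound $|L(\tfrac12+it,\pi\otimes\pi',K)|\ll(\q(\pi\otimes\pi')(1+|t|)^{[K:\Q]dd'})^{1/4+\varepsilon}$ and the trivial bound for $M_X$, by a fixed power of $X\,\q(\pi\otimes\pi')(1+T)^{[K:\Q]dd'}$ times $Y^{1/2-\sigma}$. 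Comparing, and truncating the smoothed tail at $\mathrm{N}\a\le Z$, each such zero satisfies
\[
\Big|\sum_{X<\mathrm{N}\a\le Z}b_\a e^{-\mathrm{N}\a/Y}\mathrm{N}\a^{-\rho}\Big|\gg 1
\quad\text{or}\quad
\int_{\R}\big|L(\tfrac12+iv,\pi\otimes\pi',K)M_X(\tfrac12+iv)\big|e^{-|v-\gamma|}\,dv\gg Y^{\sigma-1/2}.
\]
The first class I would count by grouping zeros into unit intervals in $\gamma$ and feeding the mean-square bound $\sum_{X<\mathrm{N}\a\le Z}|b_\a|^2\mathrm{N}\a^{-2\sigma}\ll(d')^2 X^{1-2\sigma}Z^{\varepsilon}$ into the hybrid large-sieve inequality for Dirichlet polynomials over integral ideals of $K$, arranged by the Hal\'asz--Montgomery method (together with a Gallagher-type mean-value lemma) so as not to lose a factor $\log W$; the second by a moment bound for $L$ on the critical line over $|v|\le T+1$, or, in the lower part of the range, by its reflection via the approximate functional equation --- which is how $\mathcal{Q}$ enters the final exponent with a factor proportional to $1-\sigma$. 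Choosing the mollifier length $X$ as a suitable power of $W$ and optimizing as in Montgomery's density theorem then collapses every exponent to an absolute multiple of $\mathcal{D}(1-\sigma)$. The value $\mathcal{D}=(d+d')^4$ reflects that the degree-type quantities $d+d'$ and $dd'\le\tfrac14(d+d')^2$ govern the conductor bound, the length $Z$, and the moment estimates, and are effectively squared in the optimization; in the self-dual equal-degree case $L(s,\pi\otimes\pi',K)$ factors through $\mathrm{Sym}^2$ and $\wedge^2$ data of dimension $\le d^2$, and in the $\mathrm{GL}_1$ case through $\pi\otimes\tilde\pi$ of dimension $d^2$, which is why $(d+d')^4$ improves to $d^2$ and $d^4$ there. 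Since $\mathcal{Q}\ge D_K[K:\Q]^{[K:\Q]}$ absorbs losses of shape $D_K^{O(dd')}$, $[K:\Q]^{O([K:\Q]dd')}$, and powers of $dd'$, one arrives at $N_{\pi\otimes\pi'}(\sigma,T)\ll(d')^2 W^{\Cr{c1}\mathcal{D}(1-\sigma)}$.

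\emph{Main obstacle.} The crux is effectivity, concentrated in two places. First, the effective standard-quality zero-free region with an explicit, polynomial-in-$d'$ bound on the number of exceptional zeros: unconditionally controlling (or quarantining) a Landau--Siegel zero of $L(s,\pi\otimes\pi',K)$ is the single most delicate ingredient, and it seems genuinely out of reach without the hypothesis that $d=1$ or one of $\pi,\pi'$ is self-dual, since only then is $L(s,\pi\otimes\tilde\pi,K)$ a product of $L$-functions whose non-negative coefficients can be leveraged. Second, the zero-detection step must deliver a Dirichlet polynomial of length only $W^{O(\mathcal{D})}$ with completely explicit constants, which forces effective forms of the Rankin--Selberg functional equation, the conductor bound, the convexity bound, the bound toward Ramanujan for $\pi$, and the mean-square estimate for $\lambda_{\pi\otimes\pi'}$. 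Everything else --- the large sieve over ideals, the Hal\'asz--Montgomery rearrangement, the reflection step, and the optimization of $X$ --- is the lengthy but routine bookkeeping required to see that all exponents collapse to a single multiple of $\mathcal{D}(1-\sigma)$ and that every dependence on $d,d',\q(\pi),\q(\pi'),D_K,[K:\Q]$ is swallowed by $\mathcal{Q}$ and the factor $(d')^2$.
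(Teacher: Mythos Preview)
Your approach is genuinely different from the paper's. The paper does not use a mollifier or a Montgomery/Kowalski--Michel zero-detection inequality at all. Instead it follows Gallagher and Weiss and applies Tur\'an's power-sum method directly to the logarithmic derivative $F(s)=\frac{L_S'}{L_S}(s,\pi\otimes\pi',K)$. The key technical proposition states: if $\rho_0$ is a zero with $|\rho_0-(1+i\tau)|\le\eta$ (not a Siegel zero), then
\[
\frac{y^{c\eta}}{(\log y)^3}\int_y^{y^{c'}}\Big|\sum_{y<\mathrm{N}\p\le u}\frac{\Lambda_{\pi\otimes\pi'}(\p)}{\mathrm{N}\p^{1+i\tau}}\Big|^2\frac{du}{u}\gg1,\qquad y=e^{c''\mathcal{L}}.
\]
This comes from matching upper and lower bounds for $\frac{\eta^{k+1}}{k!}|F^{(k)}(1+\eta+i\tau)|$ at some $k\asymp\eta\mathcal{L}$: the lower bound via Tur\'an's inequality $|\sum z_j^k|\ge(|z_1|/50)^k$ applied to $z_j=(\xi-\rho_j)^{-1}$, the upper bound by differentiating the Dirichlet series for $F$ termwise. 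The theorem follows by integrating over $|\tau|\le T$ and applying Weiss's large-sieve inequality for prime-supported Dirichlet polynomials over $K$; GRC for $\pi'$ is invoked exactly once, to replace $|\lambda_{\pi'}(\p)|^2$ by $(d')^2$, and the remaining sum $\sum_\p|\lambda_\pi(\p)|^2(\log\mathrm{N}\p)^2/\mathrm{N}\p$ is controlled via $-\frac{L'}{L}(1+\eta,\pi\otimes\tilde\pi,K)$ with no appeal to GRC for $\pi$.

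Two substantive contrasts. First, the Tur\'an method is \emph{intrinsically} log-free: the power-sum bound yields $(100)^{-k}$ with $k\asymp\eta\log y$, so the detector strength is already $y^{-O(\eta)}$ with no stray logarithm to remove. In your mollifier scheme the phrase ``arranged by Hal\'asz--Montgomery so as not to lose $\log W$'' is exactly the crux of the theorem, and your sketch gives no indication of how you would actually carry this out here; Akbary--Trudgian, whom you cite, work along lines closer to yours and do \emph{not} obtain effective constants, and Kowalski--Michel assume GRC throughout. Second, the paper never evaluates $L(s,\pi\otimes\pi',K)$ on the critical line, so no convexity bound and no control of the mollifier coefficients $\mu_{\pi\otimes\pi'}(\a)$ are needed. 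Your route requires a trivial bound on $M_X(\tfrac12+it)$, and without GRC for $\pi$ the local roots can be as large as $\mathrm{N}\p^{1/2-1/(d^2+1)}$, which makes that bound, and hence your optimization of $X$, far from routine.

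One correction: the zero-free region (Lemma \ref{1.9} in the paper) gives at most \emph{one} exceptional real zero, not $O((d')^2)$ of them. The factor $(d')^2$ in the theorem arises solely from the mean-value step via $|\lambda_{\pi'}(\p)|\le d'$, not from any count near $\sigma=1$. Likewise, the case distinction in $\mathcal{D}$ in the paper comes entirely from the constant in the zero-free region (which feeds into $\mathcal{L}$ and hence into $y$), not from a factorization of $L(s,\pi\otimes\pi',K)$ through $\mathrm{Sym}^2$/$\wedge^2$ as you suggest.
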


The next result follows unconditionally from Theorem \ref{main-theorem} by letting $\pi^\prime$ be the trivial representation of $\mathrm{GL}_1(\mathbb{A}_K)$.

\begin{corollary}
\label{cor:lfzde}
Let $K$ be a number field, and let $\pi\in\mathcal{A}_d(K)$.  If $T\geq1$ and $\frac{1}{2}\leq\sigma\leq 1$, then
\[
N_{\pi}(\sigma,T)\ll (\mathcal{Q}T^{[K:\Q]})^{\Cr{c1}d^4(1-\sigma)}.
\]
\end{corollary}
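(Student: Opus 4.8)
The plan is to deduce Corollary \ref{cor:lfzde} directly from Theorem \ref{main-theorem} by specializing $\pi'$ to be the trivial representation $\mathbf{1}$ of $\mathrm{GL}_1(\mathbb{A}_K)$. First I would record the three facts that make this specialization legitimate: (i) $\mathbf{1}\in\mathcal{A}_1(K)$, and its Satake parameters $\alpha_{\mathbf{1}}(1,\p)=1$ all have absolute value $1$, so $\mathbf{1}$ trivially satisfies GRC; (ii) since $d'=1$, the hypothesis ``at least one of $d$ and $d'$ equals one'' in Theorem \ref{main-theorem} holds automatically, regardless of $\pi$, so the theorem applies to every $\pi\in\mathcal{A}_d(K)$; and (iii) from the definition of the Rankin--Selberg Euler product we have $\alpha_{\pi\otimes\mathbf{1}}(j,\p)=\alpha_\pi(j,\p)$ for every $\p$ and every $j$, hence $L(s,\pi\otimes\mathbf{1},K)=L(s,\pi,K)$ as Dirichlet series, and therefore $N_{\pi\otimes\mathbf{1}}(\sigma,T)=N_\pi(\sigma,T)$ for all $\sigma$ and $T$.

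Next I would check the bookkeeping of the quantities appearing in Theorem \ref{main-theorem} under this choice of $\pi'$. With $\pi'=\mathbf{1}$ the trivial representation of $\mathrm{GL}_1(\mathbb{A}_K)$, the first case in the definition of $\mathcal{Q}=\mathcal{Q}(\pi,\mathbf{1},K)$ applies, giving $\mathcal{Q}=\max\{\q(\pi),D_K[K:\Q]^{[K:\Q]}\}$, which is exactly the quantity $\mathcal{Q}$ in the statement of the corollary. Likewise, the second case in the definition of $\mathcal{D}=\mathcal{D}(\pi,\mathbf{1})$ applies, giving $\mathcal{D}=d^4$. Finally, $d'=1$ makes the leading factor $(d')^2$ in the conclusion of Theorem \ref{main-theorem} equal to $1$.

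Combining these observations, Theorem \ref{main-theorem} yields, for $T\geq 1$ and $\tfrac12\leq\sigma\leq 1$,
\[
N_\pi(\sigma,T)=N_{\pi\otimes\mathbf{1}}(\sigma,T)\ll (d')^2\,(\mathcal{Q}T^{[K:\Q]})^{\Cr{c1}\mathcal{D}(1-\sigma)}=(\mathcal{Q}T^{[K:\Q]})^{\Cr{c1}d^4(1-\sigma)},
\]
with the same absolute constant $\Cr{c1}$, which is precisely the asserted bound. There is no substantive obstacle here: the only point requiring care is confirming that the trivial representation legitimately plays the role of a GRC-satisfying $\pi'\in\mathcal{A}_{d'}(K)$ with $d'=1$ in the hypotheses of Theorem \ref{main-theorem}, and that the conventions for $L(s,\pi\otimes\pi',K)$, its logarithmic derivative, and its zeros reduce correctly to those for $L(s,\pi,K)$ when $\pi'=\mathbf{1}$; everything else is a direct substitution.
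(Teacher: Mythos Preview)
Your proposal is correct and follows exactly the paper's approach: the paper states that the corollary ``follows unconditionally from Theorem \ref{main-theorem} by letting $\pi^\prime$ be the trivial representation of $\mathrm{GL}_1(\mathbb{A}_K)$,'' and you have simply spelled out the bookkeeping behind that one-line deduction.
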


\begin{remark1}
1.  We impose the self-duality condition in Theorem \ref{main-theorem} in order to ensure that $L(s,\pi\otimes\pi',K)$ has a standard zero-free region; see Lemma \ref{1.9}.

2.  Corollary \ref{cor:lfzde} is the first unconditional log-free zero density estimate for all automorphic $L$-functions $L(s,\pi,K)$.  Recall that Akbary and Trudgian's result requires $K=\mathbb{Q}$ and is conditional on a hypothesis on the Dirichlet coefficients of $L(s,\pi,\mathbb{Q})$ in short intervals.  In fact, using Theorem \ref{main-theorem} and Corollary \ref{cor:lfzde} (whose proofs do not require this hypothesis), we can show that the hypothesis is satisfied in many cases.  See the remarks after Theorem \ref{effective-hoheisel} and equations \eqref{eqn:shakedown_street}-\eqref{eqn:bound_sum_zeros_LFZDE}.
\end{remark1}

In addition to density estimates of the form \eqref{eqn:fogels}, Jutila \cite{Jutila} proved a ``hybrid'' density estimate of the form
\begin{equation}
\label{eqn:large_sieve_density}
\sum_{q\leq Q}~\sideset{}{^\star}\sum_{\chi\bmod q}N_{\chi}(\sigma,T)\ll (Q^2 T)^{c(1-\sigma)}(\log QT)^{c'},
\end{equation}
where the $^\star$ on the summation indicates it is to be taken over primitive characters.  Montgomery \cite{MR0249375} improved upon Jutila's work to show that one may take $c=\frac{5}{2}$.  This simultaneously generalizes \eqref{eqn:fogels} and Bombieri's large sieve density estimate \cite{Bombieri}.  As a consequence of \eqref{eqn:large_sieve_density}, one sees that the average value of $N_{\chi}(\sigma,T)$ is noticeably smaller than what is given by \eqref{eqn:fogels}.  Furthermore, \eqref{eqn:large_sieve_density} can be used to prove versions of the Bombieri-Vinogradov theorem in both long and short intervals.

Gallagher \cite{Gallagher2} proved that
\begin{equation}
\label{eqn:Gallagher}
\sum_{q\leq T}~\sideset{}{^\star}\sum_{\chi\bmod q}N_{\chi}(\sigma,T)\ll T^{c(1-\sigma)},\qquad T\geq 1,
\end{equation}
providing a mutual refinement of \eqref{eqn:fogels} and \eqref{eqn:large_sieve_density}.  Gallagher's refinement can be also used to prove Linnik's bound on the least prime in an arithmetic progression.  Our second result generalizes \eqref{eqn:Gallagher} to consider twists of Rankin-Selberg $L$-functions associated to automorphic representations over $\Q$.

\begin{theorem}
\label{main-theorem-2}
Under the notation and hypotheses of Theorem \ref{main-theorem} with $K=\Q$, there exists an absolute constant $\Cl[abcon]{c1'}>0$ such that
\[
\sum_{q\leq T}~\sideset{}{^\star}\sum_{\chi\bmod q}N_{(\pi\otimes\chi)\otimes\pi'}(\sigma,T)\ll (d')^2 (\mathcal{Q}T)^{\Cr{c1'}\mathcal{D}(1-\sigma)}.
\]
\end{theorem}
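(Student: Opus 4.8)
The plan is to rerun the zero-detection argument behind Theorem~\ref{main-theorem}, but to estimate the resulting sum of large values of Dirichlet polynomials by a \emph{hybrid} large-sieve inequality that averages simultaneously over the moduli $q\le T$ and over the primitive characters $\chi\bmod q$, exactly as Gallagher~\cite{Gallagher2} deduces \eqref{eqn:Gallagher} from \eqref{eqn:fogels}. Write $\lambda=1-\sigma$. For a primitive character $\chi$ modulo $q\le T$ one has $\pi\otimes\chi\in\mathcal{A}_d(\Q)$ and $L(s,(\pi\otimes\chi)\otimes\pi')=L(s,(\pi\otimes\pi')\otimes\chi)$, whose analytic conductor is $\mathcal{Q}$ times a bounded power of $q$, and whose Dirichlet coefficients equal $\Lambda_{\pi\otimes\pi'}(n)\chi(n)$ outside the finitely many Euler factors ramified at $q$. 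Twisting by $\chi$ changes neither the shape of the archimedean factor nor the self-duality of $(\pi\otimes\pi')\otimes\widetilde{(\pi\otimes\pi')}$ (the character cancels against its conjugate on the good Euler factors), so every analytic input used to prove Theorem~\ref{main-theorem} applies to $L(s,(\pi\otimes\chi)\otimes\pi')$ uniformly in $q$ and $\chi$: the functional equation, the standard zero-free region of Lemma~\ref{1.9}, the non-negativity of the Dirichlet coefficients of $-\tfrac{L'}{L}(s,(\pi\otimes\pi')\otimes\widetilde{(\pi\otimes\pi')})$, and the bound for the residue of that $L$-function at $s=1$ yielding the essentially log-free mean-square control of $\Lambda_{\pi\otimes\pi'}$. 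Via Lemma~\ref{1.9} and the repulsion argument for exceptional zeros used in the proof of Theorem~\ref{main-theorem}, it suffices to treat the range $\lambda\gg 1/(\mathcal{D}\log(\mathcal{Q}T))$, the remaining $\sigma$ (edge and exceptional zeros) being handled there.

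\emph{Zero detection.} Fix $q\le T$, a primitive $\chi\bmod q$, and a zero $\rho=\beta+i\gamma$ of $L(s,(\pi\otimes\chi)\otimes\pi')$ with $\beta\ge\sigma$ and $|\gamma|\le T$. Repeating the mollified smooth-explicit-formula argument of Theorem~\ref{main-theorem}, with mollifier $\sum_{m\le X}\mu_{\pi\otimes\pi'}(m)\chi(m)m^{-s}$, produces a smoothing length $Y=(\mathcal{Q}T)^{O(\mathcal{D})}$ and coefficients $a_n$ supported on $n\le Y$ with $|a_n|\le\Lambda_{\pi\otimes\pi'}(n)\log n$ (all independent of $q$ and $\chi$) such that
\[
\Big|\sum_{n\le Y}\frac{a_n\chi(n)}{n^{\rho}}\Big|\gg 1 .
\]
The sole difference from the untwisted case treated in Theorem~\ref{main-theorem} is the factor $\chi(n)$.

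\emph{Hybrid large sieve and conclusion.} Square this inequality and sum it over the zeros counted by $N_{(\pi\otimes\chi)\otimes\pi'}(\sigma,T)$, then over the primitive $\chi\bmod q$, then over $q\le T$; the left side is then $\gg\sum_{q\le T}\sideset{}{^\star}\sum_{\chi\bmod q}N_{(\pi\otimes\chi)\otimes\pi'}(\sigma,T)$. Within each $(q,\chi)$ pass to a $1$-separated set of ordinates $\gamma$, normalize the coefficients to the line $\Re s=\sigma$ (legitimate since $\beta\ge\sigma$), and use a Gallagher-type (Sobolev) inequality to replace the sum over these ordinates by an integral in $t$; the right side is then dominated by the multiplicative hybrid large-sieve bound
\[
\sum_{q\le T}~\sideset{}{^\star}\sum_{\chi\bmod q}\int_{-T}^{T}\Big|\sum_{n\le Y}\frac{a_n\chi(n)}{n^{\sigma+it}}\Big|^{2}\,dt\ \ll\ \big(T^{3}+Y\big)\sum_{n\le Y}\frac{|a_n|^{2}}{n^{2\sigma}},
\]
obtained by combining the large sieve for primitive characters with the mean value theorem for Dirichlet polynomials of Montgomery and Vaughan (this is Gallagher's device for avoiding a logarithmic loss). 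Inserting the mean-square bound for $\Lambda_{\pi\otimes\pi'}$ — this is where the factor $(d')^{2}$ enters, through the order of the pole of $L(s,(\pi\otimes\pi')\otimes\widetilde{(\pi\otimes\pi')})$ at $s=1$ — and then carrying out the same optimization of $X$, $Y$, and the remaining parameters as in the proof of Theorem~\ref{main-theorem} (absorbing the $T^{3}$ of the large sieve and the logarithmic factors into the exponent as there, the exponent $\mathcal{D}$ arising since $\log Y\asymp\mathcal{D}\log(\mathcal{Q}T)$) gives
\[
\sum_{q\le T}~\sideset{}{^\star}\sum_{\chi\bmod q}N_{(\pi\otimes\chi)\otimes\pi'}(\sigma,T)\ \ll\ (d')^{2}(\mathcal{Q}T)^{\Cr{c1'}\mathcal{D}(1-\sigma)} .
\]

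\textbf{Main obstacle.} Relative to Theorem~\ref{main-theorem}, the one genuinely new point is the averaging step: one must arrange the zero-detection so that the character average over $(q,\chi)$ and the $t$-integration are absorbed into a \emph{single} hybrid large sieve with no loss of a factor of $\log(\mathcal{Q}T)$ — Gallagher's log-removal — and this must be done with coefficients $a_n$ that, unlike for $\zeta(s)$, are not bounded but only controlled in mean square. That control is exactly what the pole at $s=1$ of $L(s,(\pi\otimes\pi')\otimes\widetilde{(\pi\otimes\pi')})$, together with the non-negativity of its logarithmic-derivative coefficients, supplies; every other step is inherited essentially verbatim from the proof of Theorem~\ref{main-theorem}.
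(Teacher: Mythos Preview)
Your high-level plan---rerun the zero-detection from Theorem~\ref{main-theorem} and replace the single mean-value estimate by a hybrid large sieve averaging over $q\le T$ and primitive $\chi\bmod q$---is exactly what the paper does. But your description of the detection step is not what the paper's proof of Theorem~\ref{main-theorem} actually is, and the mismatch matters.

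The paper does \emph{not} use a mollifier $\sum_{m\le X}\mu_{\pi\otimes\pi'}(m)m^{-s}$ or any smooth explicit formula of Montgomery--Selberg type. It uses Tur\'an's power-sum method (Lemma~\ref{4.1}) applied to high derivatives of $\tfrac{L_S'}{L_S}$ at $\xi=1+\eta+i\tau$; see Proposition~\ref{4.2} and Lemmas~\ref{lem:lower_bound_F}--\ref{lem:upper_bound_F}. The output is a Dirichlet polynomial
\[
S_{y,u}(\tau,(\pi\otimes\chi)\otimes\pi')=\sum_{y<p\le u}\frac{\Lambda_{\pi\otimes\pi'}(p)\chi(p)}{p^{1+i\tau}}
\]
supported on \emph{primes} in $(y,u]$ with $y=e^{\Cr{yexp1}\mathcal{L}}$. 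This prime support is essential: the relevant hybrid input is not the general $(T^3+Y)\sum|a_n|^2 n^{-2\sigma}$ you invoke, but Part~2 of Lemma~\ref{main-inequality}, which rests on Gallagher's Theorem~4 \cite{Gallagher2} (see \eqref{eqn:sum_unram}) with its $\log(Q/q)$ weight. That weight, together with $y$ exceeding all ramified primes and $T^5\ll p$, is precisely what kills the logarithm. Your generic hybrid bound would not obviously yield a log-free exponent without further work.

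There is also a genuine problem with your mollifier description. Coefficients built from $\mu_{\pi\otimes\pi'}$ are not known to be controllable under the paper's hypotheses (GRC is assumed only for $\pi'$); indeed, one of the points of the Tur\'an approach here is that it needs only $|\Lambda_{\pi\otimes\pi'}|$-information, handled by Cauchy--Schwarz and the pole of $L(s,\pi\otimes\tilde{\pi})$ (Lemma~\ref{1.11}). So your sketch, taken literally, appeals to an argument the paper neither contains nor could afford under its hypotheses. The correction is simply to point to Proposition~\ref{4.2} for the detection and to Part~2 of Lemma~\ref{main-inequality} for the averaging; the paper itself says no more than this.
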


As with Theorem \ref{main-theorem}, we immediately obtain the following unconditional corollary by letting $\pi^\prime$ be the trivial representation of $\mathrm{GL}_1(\mathbb{A}_{\Q})$.

\begin{corollary}
\label{cor:lfzde2}
Under the notation and hypotheses of Corollary \ref{cor:lfzde} with $K=\Q$, we have that
\[
\sum_{q\leq T}~\sideset{}{^\star}\sum_{\chi\bmod q}N_{\pi\otimes\chi}(\sigma,T)\ll (\mathcal{Q}T)^{\Cr{c1'}d^4(1-\sigma)}.
\]
\end{corollary}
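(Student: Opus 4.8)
The plan is to deduce this immediately from Theorem \ref{main-theorem-2} by specializing $\pi'$ to the trivial representation $\mathbf{1}$ of $\mathrm{GL}_1(\mathbb{A}_{\Q})$, exactly as Corollary \ref{cor:lfzde} was deduced from Theorem \ref{main-theorem}. First I would observe that $\mathbf{1}$ is self-dual and satisfies the generalized Ramanujan conjecture trivially (its only Satake parameter is $\alpha_{\mathbf{1}}(1,p)=1$ at every prime), so the hypotheses of Theorem \ref{main-theorem-2} are met for the pair $(\pi,\mathbf{1})$ with no additional assumption on $\pi$ beyond $\pi\in\mathcal{A}_d(\Q)$; this is what makes the corollary unconditional. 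Next, for any Dirichlet character $\chi$, the Rankin-Selberg convolution $(\pi\otimes\chi)\otimes\mathbf{1}$ is just $\pi\otimes\chi$ itself, since tensoring with the trivial representation of $\mathrm{GL}_1$ does not change the underlying $L$-function: $L(s,(\pi\otimes\chi)\otimes\mathbf{1},\Q)=L(s,\pi\otimes\chi,\Q)$. Hence $N_{(\pi\otimes\chi)\otimes\mathbf{1}}(\sigma,T)=N_{\pi\otimes\chi}(\sigma,T)$ for every $\chi$, and the left-hand side of the desired bound equals the left-hand side of Theorem \ref{main-theorem-2} in this case.

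It remains to check that the right-hand side of Theorem \ref{main-theorem-2}, evaluated at $\pi'=\mathbf{1}$, simplifies to the claimed form. Here $d'=1$, so the factor $(d')^2$ equals $1$ and can be dropped. For the quantity $\mathcal{Q}=\mathcal{Q}(\pi,\mathbf{1},\Q)$, the case $\pi'=\mathbf{1}$ of the definition gives $\mathcal{Q}=\max\{\q(\pi),D_{\Q}[\Q:\Q]^{[\Q:\Q]}\}=\max\{\q(\pi),1\}=\q(\pi)$, which matches the $\mathcal{Q}$ appearing in Corollary \ref{cor:lfzde} (restricted to $K=\Q$). For the exponent, the quantity $\mathcal{D}=\mathcal{D}(\pi,\mathbf{1})$ falls into the second case of the definition of $\mathcal{D}$, namely ``$\pi'$ is the trivial representation of $\mathrm{GL}_1(\mathbb{A}_K)$,'' so $\mathcal{D}=d^4$. (One should note that the first case — $d=d'$ with both self-dual — cannot apply unless $d=1$, in which case $d^2=d^4=1$ and there is no conflict; the definition is consistent.) Finally $T^{[K:\Q]}=T$ since $K=\Q$. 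Substituting all of this into the conclusion of Theorem \ref{main-theorem-2} yields
\[
\sum_{q\leq T}~\sideset{}{^\star}\sum_{\chi\bmod q}N_{\pi\otimes\chi}(\sigma,T)\ll (\q(\pi)T)^{\Cr{c1'}d^4(1-\sigma)},
\]
which is precisely the assertion of Corollary \ref{cor:lfzde2}, with $\Cr{c1'}$ the same absolute constant as in Theorem \ref{main-theorem-2}.

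There is essentially no obstacle here: the corollary is a direct unwinding of definitions from a theorem assumed available. The only points requiring a moment's care are the verification that the trivial representation satisfies GRC and is self-dual (so that the hypotheses of Theorem \ref{main-theorem-2} genuinely apply with no constraint on $\pi$), and the bookkeeping that the piecewise definitions of $\mathcal{Q}$ and $\mathcal{D}$ collapse correctly when $\pi'=\mathbf{1}$ and $K=\Q$. Both are routine. $\qed$
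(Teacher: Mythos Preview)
Your proposal is correct and matches the paper's approach exactly: the paper states that Corollary \ref{cor:lfzde2} follows immediately from Theorem \ref{main-theorem-2} by letting $\pi'$ be the trivial representation of $\mathrm{GL}_1(\mathbb{A}_{\Q})$, and your write-up simply spells out that specialization together with the routine bookkeeping for $\mathcal{Q}$, $\mathcal{D}$, and $d'$.
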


We can sometimes circumvent the need for GRC or self-duality in Theorem \ref{main-theorem} by using Corollary \ref{cor:lfzde} along with certain advances toward the Langlands program.  For example, our next result shows that there is always a log-free zero density estimate for $L(s,\pi\otimes\pi^\prime,K)$ whenever $\pi,\pi^\prime \in \mathcal{A}_2(K)$, with no additional hypotheses needed.

\begin{theorem}
\label{thm:no_GRC}
	Let $K$ be a number field.  Let $\pi\in\mathcal{A}_d(K)$ and $\pi'\in\mathcal{A}_{d'}(K)$ with $d,d'\in\{1,2\}$.  Define $\mathcal{Q}$ as in Theorem \ref{main-theorem}, and let $T\geq1$.  If $\frac{1}{2}\leq\sigma\leq1$, then
	\[
	N_{\pi\otimes\pi'}(\sigma,T)\ll (\mathcal{Q}T^{[K:\Q]})^{64\Cr{c1}(1-\sigma)}.
	\]
	If $K=\Q$, then
	\[
	\sum_{q\leq T}~\sideset{}{^\star}\sum_{\chi\bmod q}N_{(\pi\otimes\chi)\otimes\pi'}(\sigma,T)\ll (\mathcal{Q}T^{[K:\Q]})^{64\Cr{c1'}(1-\sigma)}.
	\]
\end{theorem}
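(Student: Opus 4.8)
The plan is to reduce, case by case, to the unconditional estimates of Corollary~\ref{cor:lfzde} (and, for the $K=\Q$ statement, Corollary~\ref{cor:lfzde2}) applied to automorphic representations of $\mathrm{GL}_m(\mathbb{A}_K)$ with $m\le 4$, using known instances of Langlands functoriality to express $L(s,\pi\otimes\pi',K)$ through such $L$-functions. Suppose first that $\min(d,d')=1$. Since $L(s,\pi\otimes\pi',K)=L(s,\pi'\otimes\pi,K)$ we may relabel so that $\pi'\in\mathcal{A}_1(K)$; then $\pi\otimes\pi'$ is the representation of $\mathrm{GL}_d(\mathbb{A}_K)$ obtained by twisting $\pi$ by the idele class character $\pi'$, which again lies in $\mathcal{A}_d(K)$ and has analytic conductor $\ll\q(\pi)\q(\pi')^d\ll\mathcal{Q}^2$. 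Corollary~\ref{cor:lfzde} then gives, since $d\le2$, $N_{\pi\otimes\pi'}(\sigma,T)\ll(\mathcal{Q}^2T^{[K:\Q]})^{\Cr{c1}d^4(1-\sigma)}\le(\mathcal{Q}T^{[K:\Q]})^{2\Cr{c1}d^4(1-\sigma)}\le(\mathcal{Q}T^{[K:\Q]})^{32\Cr{c1}(1-\sigma)}$, which is stronger than claimed.

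The substantive case is $d=d'=2$. Here I would invoke Ramakrishnan's theorem that $\Pi:=\pi\boxtimes\pi'$ is an isobaric automorphic representation of $\mathrm{GL}_4(\mathbb{A}_K)$ with $L(s,\Pi,K)=L(s,\pi\otimes\pi',K)$, so $N_{\pi\otimes\pi'}(\sigma,T)=N_\Pi(\sigma,T)$, and that $\Pi$ is cuspidal unless $\pi'\cong\pi\otimes\chi$ for some idele class character $\chi$. In the exceptional situation, $\pi\boxtimes(\pi\otimes\chi)=(\mathrm{Sym}^2\pi\otimes\chi)\boxplus(\omega_\pi\chi)$, whence $L(s,\pi\otimes\pi',K)=L(s,\mathrm{Sym}^2\pi\otimes\chi,K)\,L(s,\omega_\pi\chi,K)$; if in addition $\pi$ is dihedral, then $\mathrm{Sym}^2\pi$ is an isobaric sum of idele class characters and dihedral representations in $\mathcal{A}_2(K)$, all of which satisfy GRC, so $L(s,\pi\otimes\pi',K)$ becomes a product of standard $L$-functions of representations in $\mathcal{A}_1(K)\cup\mathcal{A}_2(K)$ of analytic conductor $\ll\mathcal{Q}^{O(1)}$, and adding up the estimates of Corollary~\ref{cor:lfzde} over these factors settles this subcase with room to spare. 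Two subcases survive and demand more: either $\Pi$ is cuspidal on $\mathrm{GL}_4$, or $\pi$ is non-dihedral and $\mathrm{Sym}^2\pi$ is cuspidal on $\mathrm{GL}_3$ (Gelbart--Jacquet), so that the relevant factor of $L(s,\pi\otimes\pi',K)$ is the $L$-function of a cuspidal representation $\Pi$ of degree $3$ or $4$.

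For these last subcases, applying Corollary~\ref{cor:lfzde} off the shelf is too lossy---it would contribute exponents $81\Cr{c1}(1-\sigma)$ and $256\Cr{c1}(1-\sigma)$---so instead I would revisit the proof of Theorem~\ref{main-theorem}. The hypothesis that $\pi'$ satisfies GRC is used there only to bound the Dirichlet coefficients of the Rankin--Selberg square $L(s,\Pi\otimes\widetilde\Pi,K)$ and the archimedean parameters of $\Pi$; for a cuspidal $\Pi$ of degree $3$ or $4$ that is a functorial transfer of data from $\mathcal{A}_2(K)$ (namely $\Pi=\pi\boxtimes\pi'$ or $\Pi=\mathrm{Sym}^2\pi\otimes\chi$), the local parameters of $\Pi$ satisfy $|\alpha_\Pi(j,\p)|\le \mathrm{N}\p^{7/32}$ by the Kim--Sarnak bounds for $\mathrm{GL}_2$ (which rest on the work of Kim and Kim--Shahidi), the Rankin--Selberg square has its standard analytic properties by Jacquet--Piatetski-Shapiro--Shalika, and $\q(\Pi)\ll\mathcal{Q}^{O(1)}$; feeding these unconditional inputs into the argument of Theorem~\ref{main-theorem} in place of GRC yields $N_\Pi(\sigma,T)\ll(\mathcal{Q}T^{[K:\Q]})^{64\Cr{c1}(1-\sigma)}$, which with the earlier cases proves the first assertion.

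The $K=\Q$ assertion is obtained by the identical argument with Corollary~\ref{cor:lfzde2} and Theorem~\ref{main-theorem-2} in the roles of Corollary~\ref{cor:lfzde} and Theorem~\ref{main-theorem}, because every functorial operation used above commutes with twisting by a Dirichlet character $\chi$---for instance $(\pi\otimes\chi)\boxtimes\pi'=(\pi\boxtimes\pi')\otimes\chi$ and $\mathrm{Sym}^2(\pi\otimes\chi)=\mathrm{Sym}^2\pi\otimes\chi^2$, and $\pi\otimes\chi$ remains dihedral when $\pi$ is---so the sum over $q\le T$ and primitive $\chi\bmod q$ of $N_{(\pi\otimes\chi)\otimes\pi'}(\sigma,T)$ reduces, subcase by subcase, to the analogous twisted sums for the lower-degree auxiliary $L$-functions, which are controlled by Corollary~\ref{cor:lfzde2} and, in the two hard subcases, by the twisted form of the argument of Theorem~\ref{main-theorem-2}. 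The hard part throughout is this last step: verifying that re-running the proof of Theorem~\ref{main-theorem} for these structured $\mathrm{GL}_3$ and $\mathrm{GL}_4$ representations, with the Kim--Sarnak bounds standing in for GRC, produces exactly the exponent $64\Cr{c1}(1-\sigma)$ uniformly, with every auxiliary analytic conductor controlled by a fixed power of $\mathcal{Q}$.
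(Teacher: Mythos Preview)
Your overall strategy matches the paper's: reduce via known functoriality (Gelbart--Jacquet for $\mathrm{Sym}^2$, Ramakrishnan for $\mathrm{GL}_2\times\mathrm{GL}_2$) to standard $L$-functions of cuspidal representations of degree at most $4$, and then invoke the unconditional Corollary~\ref{cor:lfzde} (respectively Corollary~\ref{cor:lfzde2}).  The paper's argument is slightly more economical in its case division: it first disposes of the case where either $\pi$ or $\pi'$ satisfies GRC by applying Theorem~\ref{main-theorem} directly, and then observes that if neither does, both must be non-monomial representations in $\mathcal{A}_2(K)$, so Ramakrishnan's theorem immediately gives either $\pi\boxtimes\pi'\in\mathcal{A}_4(K)$ cuspidal or the $\mathrm{Sym}^2\pi\otimes\chi$ factorization with $\mathrm{Sym}^2\pi\in\mathcal{A}_3(K)$ cuspidal.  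The paper then simply applies Corollary~\ref{cor:lfzde} to the resulting $\mathrm{GL}_3$ or $\mathrm{GL}_4$ representation and declares the result.

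The main divergence is your treatment of the constant.  You are right that applying Corollary~\ref{cor:lfzde} off the shelf to a cuspidal $\Pi\in\mathcal{A}_4(K)$ yields exponent $4^4\Cr{c1}=256\Cr{c1}$ rather than the stated $64\Cr{c1}$; the paper does not address this discrepancy and simply appeals to the corollary, so the ``$64$'' in the statement appears to be either a loose use of the absolute constant or a minor slip---it is in any case irrelevant for the applications.  Your proposed remedy (re-running the proof of Theorem~\ref{main-theorem} with the Kim--Sarnak bounds standing in for GRC) is therefore unnecessary, and I would caution that it is not obvious it recovers exactly $64\Cr{c1}$ either: the sole use of GRC is inside Lemma~\ref{main-inequality}, where it lets one bound $|\lambda_{\pi'}(\p)|^2$ by $(d')^2$ and reduce to the Rankin--Selberg mean value for $\pi\otimes\tilde\pi$; substituting $|\alpha_{\pi'}(j,\p)|\le\mathrm{N}\p^{7/64}$ leaves an extra factor of $\mathrm{N}\p^{7/32}$ inside the sum, which would have to be absorbed somewhere and does not obviously interact with $\mathcal{D}$ in the way you suggest.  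If you wish to pursue the sharper exponent, that step needs a genuine argument; otherwise, the clean route is the paper's---accept the larger absolute constant coming from Corollary~\ref{cor:lfzde} and move on.
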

\begin{remark}
	In particular, Theorem \ref{thm:no_GRC} applies to $L(s,\pi\otimes\pi',K)$, where $\pi,\pi'\in\mathcal{A}_2(K)$ each correspond with Hecke-Maass forms for which GRC is not known.  The special case where $K=\Q$, $\pi$ corresponds to a Hecke-Maass form, and $\pi'\cong\tilde{\pi}$ was proved by Motohashi \cite{Motohashi} using methods different from our own.
\end{remark}

We now turn to the applications of Theorems \ref{main-theorem} and \ref{main-theorem-2} and their corollaries.  We begin by considering a version of \eqref{eqn:twisted_PNT_pi} with effective bounds on the size of the intervals for $L$-functions satisfying the generalized Ramanujan conjecture.  

\begin{theorem}
\label{effective-hoheisel}
Assume the above notation.  Let $\pi\in\mathcal{A}_d(K)$ be a self-dual representation which satisfies GRC.  There exists a positive absolute constant $\Cl[abcon]{hoheiselc}>0$ such that if
\[
\delta\leq\frac{\Cr{hoheiselc}}{d^{4}[K:\Q]\log(3d)},
\]
$x$ is sufficiently large, and $h\geq x^{1-\delta}$, then
\[
\sum_{x<\mathrm{N}\a\leq x+h}\Lambda_{\pi\otimes\tilde{\pi}}(\a)\asymp h,
\]
where the implied constant depends on $\pi$ and $K$.  If $\pi\in\mathcal{A}_2(K)$, then the result is unconditional and depends on neither GRC nor self-duality.
\end{theorem}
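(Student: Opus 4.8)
The plan is to deduce the Hoheisel-type asymptotic from the log-free zero density estimate of Corollary \ref{cor:lfzde} (applied to $L(s,\pi\otimes\tilde\pi,K)$, which is the relevant case of Theorem \ref{main-theorem} with $\pi'=\tilde\pi$) together with a standard zero-free region. First I would recall that because $\pi$ is self-dual, $L(s,\pi\otimes\tilde\pi,K)$ factors as $\zeta_K(s)$ times the (still automorphic, or at least admissible in the sense of having the requisite analytic properties) $L$-function, and it possesses a classical Hadamard--de la Vallée Poussin zero-free region by Lemma \ref{1.9}: there is an absolute $c>0$ and a possible exceptional real zero $\beta_1$ such that, apart from $\beta_1$, there are no zeros with $\re(s)\ge 1-c/(\mathcal{D}^?\log(\mathfrak{q}(\pi\otimes\tilde\pi)(|t|+3)))$ for a suitable power of the dimension; one must keep the $d$-dependence of this region explicit, which is why $d^4$ and $\log(3d)$ appear in the statement. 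The exceptional zero, if present, is handled by the usual Deuring--Heilbronn phenomenon or simply absorbed since we only claim $\asymp h$ with constants depending on $\pi$ and $K$; for $\pi\in\mathcal{A}_2(K)$ the zero-free region is known unconditionally (Hoffstein--Ramakrishnan and related work), giving the last sentence.

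Next I would run the standard explicit-formula argument. Writing $\psi_{\pi\otimes\tilde\pi}(x)=\sum_{\mathrm{N}\a\le x}\Lambda_{\pi\otimes\tilde\pi}(\a)$, the truncated explicit formula gives
\[
\psi_{\pi\otimes\tilde\pi}(x+h)-\psi_{\pi\otimes\tilde\pi}(x) = h - \sum_{|\gamma|\le T}\frac{(x+h)^{\rho}-x^{\rho}}{\rho} + O\!\left(\frac{x(\log x)^2}{T}\right),
\]
valid for a parameter $T$ to be chosen as a small power of $x$. One bounds each term $\frac{(x+h)^{\rho}-x^{\rho}}{\rho}\ll h x^{\beta-1}$, so the sum over zeros is $\ll h\sum_{|\gamma|\le T}x^{\beta-1}=h\sum_{|\gamma|\le T}x^{-(1-\beta)}$. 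I would then split this sum over zeros dyadically in $1-\sigma$, estimate $\#\{\rho:\beta\ge\sigma,|\gamma|\le T\}$ by Corollary \ref{cor:lfzde} as $\ll(\mathcal{Q}T^{[K:\Q]})^{c_1 d^4(1-\sigma)}$, and use the zero-free region to start the sum at $\sigma = 1-c/(d^4[K:\Q]\log(\mathcal{Q}T))$. Combining, the sum over zeros is bounded by a geometric-type series in which the generic term is $h\cdot(\mathcal{Q}T^{[K:\Q]}x^{-1/(1-\sigma)})^{\text{stuff}}$; choosing $T=x^{\eta}$ for a small absolute $\eta$ and $h\ge x^{1-\delta}$, the crucial inequality is that $c_1 d^4[K:\Q](1+\eta)\delta < 1$ after accounting for the $\log$ savings from the zero-free region, which forces exactly the constraint $\delta \le \Cr{hoheiselc}/(d^4[K:\Q]\log(3d))$ once one tracks that the zero-free region contributes a factor $\log(3d)$ in the denominator. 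The error term $x(\log x)^2/T = x^{1-\eta}(\log x)^2$ is $o(h)$ since $h\ge x^{1-\delta}$ with $\delta<\eta$.

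The main obstacle is bookkeeping the explicit $d$- and $[K:\Q]$-dependence uniformly through the explicit formula: one needs the number of zeros in a unit-height strip, $N_{\pi\otimes\tilde\pi}(0,T+1)-N_{\pi\otimes\tilde\pi}(0,T)\ll \mathcal{D}\log(\mathcal{Q}(T+3)^{[K:\Q]})$, to control the tail $O(x(\log x)^2/T)$ — really $O(x(\log\mathcal{Q}x)\cdot\mathcal{D}\log(\mathcal{Q}x)/T)$ — and one needs the zero-free region with its dimension dependence precisely as in Lemma \ref{1.9}. The interplay of these two (the density exponent $c_1 d^4$ versus the zero-free region width $\sim 1/(d^4\log(3d))$) is what produces the clean bound $\delta\ll 1/(d^4[K:\Q]\log 3d)$; verifying that no extra powers of $d$ sneak in from the contour shift or from the number of Archimedean gamma factors ($\ll d^2[K:\Q]$ of them) is the delicate point. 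Once that is pinned down, the lower bound $\psi(x+h)-\psi(x)\gg h$ and the matching upper bound follow immediately, and the exceptional zero (if it exists) only shifts the implied constant, not the range of $\delta$, because its contribution $h\,x^{\beta_1-1}$ is $\le h$ and with opposite considerations handled via Landau's theorem on the rarity of such zeros — or, in the $\mathcal{A}_2$ case, simply does not arise.
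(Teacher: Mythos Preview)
Your overall strategy---combine the log-free density estimate with the zero-free region of Lemma \ref{1.9} and an explicit formula to bound $\sum_{|\gamma|\le T}x^{\beta-1}$---is exactly the paper's, and your accounting for the $d^4[K:\Q]\log(3d)$ dependence is on target. The difference lies in how the explicit formula is obtained, and there is a genuine gap in your treatment of the unconditional $\mathcal{A}_2(K)$ claim.

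You invoke the ``standard'' truncated explicit formula with error $O(x(\log x)^2/T)$. That error, however, comes from the Perron truncation and requires bounding $\sum_{|\mathrm{N}\a-x|\le x/H}|\Lambda_{\pi\otimes\tilde\pi}(\a)|$ for an auxiliary $H$. Without GRC there is no usable pointwise bound on $|\Lambda_{\pi\otimes\tilde\pi}(\a)|$; controlling this sum in short intervals is precisely Hypothesis~1.1 of Akbary--Trudgian, which is essentially equivalent to the theorem being proved. Under GRC (the main hypothesis) your approach is fine, since $|\Lambda_{\pi\otimes\tilde\pi}(\a)|\le d^2\Lambda_K(\a)$. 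But the last sentence of the theorem, dropping GRC for $\pi\in\mathcal{A}_2(K)$, does not follow from your argument as written.

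The paper avoids this circularity by never invoking truncated Perron. It works instead with the once-integrated sum
\[
\psi_{\pi\otimes\tilde\pi}(x)=\int_0^x\sum_{\mathrm{N}\a\le t}\Lambda_{\pi\otimes\tilde\pi}(\a)\,dt,
\]
whose Mellin inversion has kernel $x^{s+1}/(s(s+1))$ and hence an absolutely convergent contour integral with no truncation error. The key input is Rankin--Selberg positivity $\Lambda_{\pi\otimes\tilde\pi}(\a)\ge 0$: this makes $t\mapsto\sum_{\mathrm{N}\a\le t}\Lambda_{\pi\otimes\tilde\pi}(\a)$ monotone, so the mean value theorem sandwiches $\sum_{\mathrm{N}\a\le x}\Lambda_{\pi\otimes\tilde\pi}(\a)$ between $(\psi_{\pi\otimes\tilde\pi}(x\pm y)-\psi_{\pi\otimes\tilde\pi}(x))/(\pm y)$ for a smoothing parameter $y$. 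After the residue calculation and a judicious choice of $y$, one obtains an explicit formula whose error is itself expressed as a sum over zeros, with no reference whatsoever to the size of individual $\Lambda_{\pi\otimes\tilde\pi}(\a)$. This is the device you are missing; once it is in place, the rest of your sketch (including absorbing the possible Siegel zero as $o(1)$ in $x$) goes through. For the $\mathcal{A}_2(K)$ case one also needs the unconditional density estimate of Theorem~\ref{thm:no_GRC} (via the Gelbart--Jacquet and Ramakrishnan factorization of $\pi\otimes\tilde\pi$), not merely an unconditional zero-free region.
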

\begin{remark}
%The key to removing the assumption of GRC in the cases mentioned is the identity
%\begin{equation}
%\label{eqn:factor_RS}
%L(s,\mathrm{Sym}^n \pi\otimes\mathrm{Sym}^n \tilde{\pi},K)=L(s,\omega,K)\prod_{j=1}^n L(s,\mathrm{Sym}^{2j}\pi,K),
%\end{equation}
%where $\omega$ is the central character; an analogous factorization holds when the representations are twisted by Hecke characters.  Now, instead of applying Theorem \ref{main-theorem}, which requires that one of the representations has an $L$-function satisfying GRC, we apply Corollary \ref{cor:lfzde}, which is unconditional, to each of the factors of $L(s,\mathrm{Sym}^n \pi\otimes\mathrm{Sym}^n \tilde{\pi},K)$.  We now rely on the fact that if $\pi\in\mathcal{A}_2(K)$, then $\mathrm{Sym}^2\pi\in\mathcal{A}_3(K)$ and $\mathrm{Sym}^4\pi\in \mathcal{A}_5(K)$ by the work of Gelbart and Jacquet \cite{GJ} and Kim \cite{Kim}, respectively.  In fact, w
When $L(s,\pi\otimes\tilde{\pi},K)$ factors as a product of $L$-functions of cuspidal automorphic representations, then our proof of Theorem \ref{effective-hoheisel} {\it confirms} Hypothesis 1.1 of \cite{AT}.  This is particularly interesting when $\pi$ is associated to a Hecke-Maass form over $K$, where GRC is not known.  In this case, however, when $K=\mathbb{Q}$, Motohashi \cite{Motohashi} proved a version of Theorem \ref{effective-hoheisel} using his aforementioned log-free zero density estimate.
\end{remark}

It is of course somewhat unsatisfying that we are not able to obtain an asymptotic formula in Theorem \ref{effective-hoheisel} to provide a true short interval analogue of \eqref{eqn:twisted_PNT_pi}.  As remarked earlier, this is due to the lack of a strong zero-free region for general automorphic $L$-functions and seems unavoidable at present.  Good zero-free regions of a quality better than Littlewood's exist for Dedekind zeta functions (for example, due to Mitsui \cite{Mitsui}), which enabled Balog and Ono \cite{BO} to prove a prime number theorem for prime ideals in Chebotarev sets lying in short intervals.

Even though versions of Theorem \ref{effective-hoheisel} with asymptotic equality are only known in special cases, we can use Theorem \ref{main-theorem-2} to show that the predicted asymptotic holds on average.  We prove the following generalization of \cite[Theorem 7]{Gallagher2}; to obtain unconditional results, we restrict ourselves to consider automorphic representations of $\mathrm{GL}_2(\mathbb{A}_{\Q})$.

\begin{theorem}
\label{thm:auto_BV}
Let $\pi\in\mathcal{A}_2(\Q)$.  There exist absolute constants $\Cl[abcon]{BV1}>0$ and $\Cl[abcon]{BV2}>0$ such that if $\exp(\sqrt{\log x})\leq Q\leq x^{\Cr{BV1}}$ and $x/Q\leq h\leq x$, then
\begin{align*}
\sum_{q\leq Q}~\sideset{}{^\star}\sum_{\chi\bmod q}\left|\sum_{x<n\leq x+h}\Lambda_{\pi\otimes\tilde{\pi}}(n)\chi(n)-\delta(\chi)h+\delta_{q,*}(\chi)h \xi^{\beta_{\mathrm{exc}}-1}\right|\ll h\exp\Big(- \frac{\Cr{BV2}\log x}{\log(Q\q(\pi))}\Big)
\end{align*}
for some $\xi\in[x,x+h]$.  Here, $\delta(\chi)=1$ if $\chi$ is the trivial character and is zero otherwise, and $\beta_{\mathrm{exc}}$ denotes the Siegel zero associated to an exceptional real Dirichlet character $\chi^* \pmod{q}$ if it exists.   We set $\delta_{q,*}(\chi)=1$ if $\chi=\chi^*$ and zero otherwise.  The implied constant depends on at most $\q(\pi)$.  
\end{theorem}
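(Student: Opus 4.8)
The plan is to reduce the averaged short-interval sum to a sum over zeros of twisted $\mathrm{GL}_3$ $L$-functions and feed the result into Corollary \ref{cor:lfzde2}. First I would apply the Gelbart--Jacquet identity $\pi\otimes\tilde{\pi}\cong\mathbf{1}\boxplus\mathrm{Ad}\,\pi$, valid for every $\pi\in\mathcal{A}_2(\Q)$, so that $\Lambda_{\pi\otimes\tilde{\pi}}(n)=\Lambda(n)+\Lambda_{\mathrm{Ad}\,\pi}(n)$ and hence, for each primitive $\chi\bmod q$,
\[
\sum_{x<n\le x+h}\Lambda_{\pi\otimes\tilde{\pi}}(n)\chi(n)=\sum_{x<n\le x+h}\Lambda(n)\chi(n)+\sum_{x<n\le x+h}\Lambda_{\mathrm{Ad}\,\pi}(n)\chi(n).
\]
Averaged over $q\le Q$ and over primitive $\chi\bmod q$, the first sum on the right is handled by the same method as \cite[Theorem 7]{Gallagher2}, in its short-interval form: the truncated explicit formula for $L(s,\chi)$, the log-free estimate \eqref{eqn:Gallagher}, and the de la Vall\'ee Poussin zero-free region together produce the main term $\delta(\chi)h$ and isolate the only real zero near $s=1$ uncovered by that region --- the Siegel zero $\beta_{\mathrm{exc}}$ of $L(s,\chi^*)$ --- whose contribution is exactly $-\delta_{q,*}(\chi)h\xi^{\beta_{\mathrm{exc}}-1}$ for the $\xi\in[x,x+h]$ with $\xi^{\beta_{\mathrm{exc}}-1}=\frac1h\int_x^{x+h}u^{\beta_{\mathrm{exc}}-1}\,du$. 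It therefore remains to bound $\sum_{q\le Q}\sideset{}{^\star}\sum_{\chi\bmod q}\big|\sum_{x<n\le x+h}\Lambda_{\mathrm{Ad}\,\pi}(n)\chi(n)\big|$, which is the new content. (If $\pi$ is dihedral then $\mathrm{Ad}\,\pi$ is not cuspidal and $L(s,\mathrm{Ad}\,\pi\otimes\chi)$ factors into Hecke $L$-functions of a quadratic field; then I would run the argument below with Weiss's effective log-free estimate \cite{weiss}, together with the classical treatment of the exceptional zeros of those Hecke $L$-functions, in place of Corollary \ref{cor:lfzde2}. So assume henceforth that $\mathrm{Ad}\,\pi\in\mathcal{A}_3(\Q)$.)

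For the adjoint piece I would apply the truncated explicit formula to $L(s,\mathrm{Ad}\,\pi\otimes\chi,\Q)$, which is entire of analytic conductor $\ll\q(\pi)^{O(1)}q^3$, obtaining
\[
\sum_{x<n\le x+h}\Lambda_{\mathrm{Ad}\,\pi}(n)\chi(n)=-\sum_{\rho:\,|\gamma|\le T}\frac{(x+h)^{\rho}-x^{\rho}}{\rho}+O\!\left(\frac{x\big(\log(\q(\pi)qx)\big)^2}{T}\right),
\]
the sum running over the nontrivial zeros $\rho=\beta+i\gamma$ of $L(s,\mathrm{Ad}\,\pi\otimes\chi)$ (the coefficients $\Lambda_{\mathrm{Ad}\,\pi}(n)$ obey the bounds needed for this truncation). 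Summing the error over the $\asymp Q^2$ pairs $(q,\chi)$ and using $h\ge x/Q$, I would choose
\[
T=T(Q,x)\asymp Q^2\,(x/h)\,(\log x)^2\exp\!\Big(\tfrac{c\log x}{\log(Q\q(\pi))}\Big)
\]
so that the total error is $\ll h\exp\big(-c\log x/\log(Q\q(\pi))\big)$. Since $Q\ge\exp(\sqrt{\log x})$ we have $\log x/\log(Q\q(\pi))\le\sqrt{\log x}\le\log Q$, so this forces $\log T\ll\log(Q\q(\pi))$; and since $Q\le x^{\Cr{BV1}}$ with $\Cr{BV1}$ small, $T$ remains below a fixed small power of $x$, hence $Q\le T$. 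Therefore Corollary \ref{cor:lfzde2}, applied with the cuspidal representation $\mathrm{Ad}\,\pi\in\mathcal{A}_3(\Q)$, gives
\[
\sum_{q\le T}\sideset{}{^\star}\sum_{\chi\bmod q}N_{\mathrm{Ad}\,\pi\otimes\chi}(\sigma,T)\ \ll\ \big(\q(\pi)^{O(1)}T\big)^{81\Cr{c1'}(1-\sigma)}.
\]

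Finally I would estimate $\sum_{q\le Q}\sideset{}{^\star}\sum_{\chi\bmod q}\sum_{\rho:\,|\gamma|\le T}\big|(x+h)^{\rho}-x^{\rho}\big|/|\rho|$. Writing $\frac{(x+h)^{\rho}-x^{\rho}}{\rho}=\int_x^{x+h}u^{\rho-1}\,du$, bounding it by $\min\big(hx^{\beta-1},\,x^{\beta}/(1+|\gamma|)\big)$, splitting the range of $|\gamma|$ at $x/h$ (which is $\le Q$), and using partial summation against the density estimate above, this sum is
\[
\ll\ h\,\big(\log(\q(\pi)x)\big)^{O(1)}\max_{\theta\ge\theta_0}\Big(\big(\q(\pi)^{O(1)}T\big)^{81\Cr{c1'}\theta}\,x^{-\theta}\Big),
\]
where $\theta_0$ is the edge of the zero-free region of $L(s,\mathrm{Ad}\,\pi\otimes\chi)$ for $q\le Q$, $|\gamma|\le T$. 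Here I would invoke the nonexistence of Landau--Siegel zeros for cuspidal $\mathrm{GL}_3$ $L$-functions (Hoffstein--Ramakrishnan, and Banks for twisted symmetric squares) to conclude that $L(s,\mathrm{Ad}\,\pi\otimes\chi)$ has a standard zero-free region \emph{with no exceptional zero}, so that $\theta_0\gg 1/\log(\q(\pi)^{O(1)}q^3T)\gg 1/\log(Q\q(\pi))$. Since $\Cr{BV1}$ is small, $\big(\q(\pi)^{O(1)}T\big)^{81\Cr{c1'}}\le x^{1/2}$ once $x$ is large, so the maximum occurs at $\theta=\theta_0$ and equals $\exp\big(-\theta_0\log\big(x/(\q(\pi)^{O(1)}T)^{81\Cr{c1'}}\big)\big)\ll\exp\big(-c\log x/\log(Q\q(\pi))\big)$; absorbing the power of $\log(\q(\pi)x)$ into a slightly smaller $\Cr{BV2}$ and adding the classical piece completes the proof. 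The main obstacle is precisely this final calibration: $T$ must be taken as a function of $Q$ rather than a fixed power of $x$, so that $\log T\ll\log(Q\q(\pi))$ --- this is what forces the hypothesis $Q\ge\exp(\sqrt{\log x})$ and what keeps $\theta_0$ large enough to dominate the density exponent --- and the absence of an exceptional zero of $L(s,\mathrm{Ad}\,\pi\otimes\chi)$ is indispensable here, since the ineffective Siegel-type lower bound for the distance of such a zero from $s=1$ would be far too weak to serve in its place.
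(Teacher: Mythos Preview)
Your approach is correct but differs from the paper's in its organizing principle.  The paper does not decompose $\pi\otimes\tilde\pi$ at the outset; instead it works directly with the Rankin--Selberg $L$-function $L(s,(\pi\otimes\chi)\otimes\tilde\pi,\Q)$, applies the integrated explicit formula from the proof of Theorem~\ref{effective-hoheisel} (using the nonnegativity of $\Lambda_{\pi\otimes\tilde\pi}$ to avoid any delicate truncation), and then feeds the resulting sum over zeros into the hybrid estimate of Theorem~\ref{thm:no_GRC} rather than Corollary~\ref{cor:lfzde2}.  The Siegel-zero bookkeeping is handled in one stroke by Ramakrishnan--Wang \cite{RW}, which says that any Siegel zero of $L(s,(\pi\otimes\chi)\otimes\tilde\pi,\Q)$ is inherited from $L(s,\chi)$; this is of course equivalent, via the factorization you use, to the Banks-type input you invoke for the adjoint twist.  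The paper's calibration is also simpler: it takes $T=Q^5$ flat, so that the truncation error is $hQ^4/T=h/Q$ and $\log T\asymp\log Q$ automatically, with no need to solve for $T$ in terms of $h$.  Your decomposition buys transparency (the exceptional-zero discussion becomes the classical one for Dirichlet $L$-functions, and the dihedral case is visibly isolated), at the cost of a slightly worse density exponent ($81\Cr{c1'}$ versus $64\Cr{c1'}$) and some extra care in the truncated explicit formula for $\Lambda_{\mathrm{Ad}\,\pi}$, where you should either pass through $|\Lambda_{\mathrm{Ad}\,\pi}(n)|\le\Lambda_{\pi\otimes\tilde\pi}(n)+\Lambda(n)$ and invoke the short-interval bound from Theorem~\ref{effective-hoheisel}, or simply run the explicit formula for the full Rankin--Selberg $L$-function and split into Dirichlet and adjoint zeros afterwards.
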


Unlike the previous log-free zero density estimates for general automorphic $L$-functions discussed earlier, Theorem \ref{main-theorem} allows us to handle questions where maintaining uniformity in parameters is crucial. One famous example of such an application is the Sato-Tate conjecture, which concerns the distribution of the quantities $\lambda_\pi(\mathfrak{p})$ attached to representations $\pi\in\mathcal{A}_2(K)$, where $K$ is a totally real field;  for generalizations to higher degree representations, see, for example, Serre \cite{SerreLectures}.  Suppose that $\pi$ has trivial central character, does not have complex multiplication, and is \emph{genuine} (see Section \ref{subsec:sato-tate} for a definition).  Suppose further that $\pi$ satisfies GRC.  Then $|\lambda_\pi(\mathfrak{p})| \leq 2$ at all unramified $\p$.  We may thus write $\lambda_{\pi}(\mathfrak{p})=2\cos\theta_{\p}$ for some angle $\theta_{\p}\in[0,\pi]$.  The Sato-Tate conjecture predicts that if $I=[a,b]\subset[-1,1]$ is a fixed subinterval, then
\begin{equation*}
\label{eqn:satotate}
\lim_{x\to\infty}\frac{1}{\pi_K(x)} \#\{\mathrm{N}\mathfrak{p} \leq x\colon \cos \theta_{\p} \in I\} = \frac{2}{\pi} \int_I \sqrt{1-t^2}\,dt =: \mu_{\mathrm{ST}}(I),
\end{equation*}
where $\pi_K(x):=\#\{\mathfrak{p}\colon\mathrm{N}\mathfrak{p}\leq x\}$.  The Sato-Tate conjecture is now a theorem for large classes of $\pi$.  For newforms over $\Q$ and elliptic curves over totally real fields, this was proved by Barnet-Lamb, Geraghty, Harris, and Taylor \cite{Sato-Tate}, and for Hilbert modular forms, this was done by Barnet-Lamb, Gee, and Geraghty \cite{BGG}.  The proofs rely upon showing that the symmetric power $L$-functions $L(s,\mathrm{Sym}^n \pi,K)$ are all potentially automorphic, that is, there exists a finite, totally real Galois extension $L/K$ such that $\mathrm{Sym}^n \pi$ is automorphic over $L$.  It is expected that $L(s,\mathrm{Sym}^n \pi,K)\in\mathcal{A}_{n+1}(K)$ for each $n\geq1$, but as of right now, this is known in general only for $n\leq 4$ (see \cite{GJ,Kim,KS1,KS2}).  By recent work of Clozel and Thorne \cite{ClozelThorne}, if $\pi$ is associated to a classical modular form, and $K\cap\Q(e^{2\pi i/35})=\Q$, then $L(s,\mathrm{Sym}^n\pi,K)\in\mathcal{A}_{n+1}(K)$ for $n\leq 8$.  Despite this recent progress, because of our limited knowledge of automorphy, the number of symmetric powers needed to access the interval $I$ is particularly important in the sorts of analytic problems considered in this paper.

Recall that the Chebyshev polynomials $U_n(t)$, defined by
\[
\sum_{n=0}^\infty U_n(t) x^n = \frac{1}{1-2tx+x^2},
\]
form an orthnormal basis for $L^2([-1,1],\mu_{\mathrm{ST}})$.  If $\pi_\mathfrak{p}$ is unramified, then $U_n(\cos\theta_{\p})$ is the Dirichlet coefficient of $L(s,\mathrm{Sym}^n \pi,K)$ at the prime $\mathfrak{p}$.  We say that a subset $I\subseteq[-1,1]$ can be {\bf $\mathrm{\mathbf{Sym}^N}$-minorized} if there exist constants $b_0,\dots, b_N\in\mathbb{R}$ with $b_0>0$ such that
\begin{equation}
\label{eqn:symn-minor1}
{\bf 1}_{I}(t) \geq \sum_{n=0}^N b_n U_n(t)
\end{equation}
for all $t\in[-1,1]$, where ${\bf 1}_{I}(\cdot)$ denotes the indicator function of $I$.  Note that if $I$ can be $\mathrm{Sym}^N$-minorized, then it is the union of intervals which individually need not be $\mathrm{Sym}^N$-minorizable.  We prove the following result.

\begin{theorem} \label{thm:hoheisel_sato_tate}
Assume the above notation.  Let $K$ be a totally real number field, and let $\pi\in\mathcal{A}_2(K)$ be a non-CM genuine representation which satisfies GRC and has trivial central character.  Suppose that a fixed subset $I\subseteq[-1,1]$ can be $\mathrm{Sym}^N$-minorized and that $\mathrm{Sym}^{n} \pi\in\mathcal{A}_{n+1}(K)$ for each $n\leq N$.  Let $B=\max_{0\leq n\leq N}|b_n|/b_0$, where $b_0,\ldots,b_N$ are as in \eqref{eqn:symn-minor1}.  There exists an absolute constant $\Cl[abcon]{satotatec1}>0$ such that if
\[
\delta \leq \frac{\Cr{satotatec1}}{N^4 [K:\Q]\log(3BN)},
\]
$x$ is sufficiently large, and $h\geq x^{1-\delta}$, then
\[
\sum_{\substack{x<\mathrm{N}\p\leq x+h \\ \textup{$\pi_{\p}$ unramified}}} {\bf 1}_{I}(\cos \theta_{\p}) \log\mathrm{N}\p\asymp h,
\]
where the implied constant depends on $B$, $I$, and $K$.  In particular, if $I$ can be $\mathrm{Sym}^4$-minorized, or if $I$ can be $\mathrm{Sym}^8$-minorized and $\pi$ is a Hecke newform over $\mathbb{Q}$, then this is unconditional.\end{theorem}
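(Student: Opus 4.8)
The plan is to combine the $\mathrm{Sym}^N$-minorization of $I$ with the Hoheisel-type argument underlying Theorem~\ref{effective-hoheisel}, applied to each symmetric power $L$-function $L(s,\mathrm{Sym}^n\pi,K)$ with $0\le n\le N$ in place of $L(s,\pi\otimes\tilde\pi,K)$. The required inputs are available uniformly in $n$: since $\pi$ has trivial central character it is self-dual, hence so is every $\mathrm{Sym}^n\pi$, and together with the hypothesis $\mathrm{Sym}^n\pi\in\mathcal{A}_{n+1}(K)$ this lets Lemma~\ref{1.9} (applied with $\mathrm{Sym}^n\pi$ self-dual and the trivial representation of $\mathrm{GL}_1(\mathbb{A}_K)$) supply a standard zero-free region for $L(s,\mathrm{Sym}^n\pi,K)$; meanwhile Corollary~\ref{cor:lfzde} gives the log-free zero density estimate $N_{\mathrm{Sym}^n\pi}(\sigma,T)\ll(\mathcal{Q}_nT^{[K:\Q]})^{\Cr{c1}(n+1)^4(1-\sigma)}$, whose crucial feature is that the exponent equals $\Cr{c1}(n+1)^4$ with $\Cr{c1}$ absolute ($\mathcal{Q}_n$ being the quantity of Corollary~\ref{cor:lfzde} attached to $\mathrm{Sym}^n\pi$).

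First I would record the reduction. For all but finitely many $\p$ the representation $\pi_\p$ is unramified, and there $U_n(\cos\theta_\p)=\lambda_{\mathrm{Sym}^n\pi}(\p)$ while $\cos\theta_\p\in[-1,1]$ by GRC; applying \eqref{eqn:symn-minor1} pointwise therefore gives
\[
\sum_{\substack{x<\mathrm{N}\p\le x+h\\ \textup{$\pi_{\p}$ unramified}}}{\bf 1}_I(\cos\theta_\p)\log\mathrm{N}\p\ \geq\ \sum_{n=0}^N b_n\sum_{\substack{x<\mathrm{N}\p\le x+h\\ \textup{$\pi_{\p}$ unramified}}}\lambda_{\mathrm{Sym}^n\pi}(\p)\log\mathrm{N}\p .
\]
Under GRC one has $|\Lambda_{\mathrm{Sym}^n\pi}(\p^k)|\le(n+1)\log\mathrm{N}\p$, so prime powers and ramified primes contribute $O_{\pi,n}(\sqrt{x+h}\,)=o(h)$, and the inner sum equals $\sum_{x<\mathrm{N}\a\le x+h}\Lambda_{\mathrm{Sym}^n\pi}(\a)+o(h)$. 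For $n=0$ this is $\sum_{x<\mathrm{N}\a\le x+h}\Lambda_K(\a)=h(1+o(1))$, with the main term coming from the pole of $\zeta_K(s)$ at $s=1$; for $1\le n\le N$ the representation $\mathrm{Sym}^n\pi$ is cuspidal, $L(s,\mathrm{Sym}^n\pi,K)$ is entire, and there is no main term.

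The core step is to show, for $1\le n\le N$, for $h\ge x^{1-\delta}$ with $\delta\le 1/(2\Cr{c1}(n+1)^4[K:\Q])$, and for $x$ large, that
\[
\Big|\sum_{x<\mathrm{N}\a\le x+h}\Lambda_{\mathrm{Sym}^n\pi}(\a)\Big|\ \ll\ h\exp\Big(-\frac{c_0}{[K:\Q]\,\delta}\Big)\ +\ o_{\pi,n}(h),
\]
where $c_0>0$ is absolute and the term $o_{\pi,n}(h)$ absorbs the prime-power/ramified contributions, the slack between $\log\mathcal{Q}_n$ and $\log x$, and a possible exceptional real zero of the self-dual $L(s,\mathrm{Sym}^n\pi,K)$ (handled exactly as in the proof of Theorem~\ref{effective-hoheisel}, at worst making the eventual implied constant ineffective; no exceptional zero occurs for $n=1$ since $\pi$ is non-CM). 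This is Moreno's explicit-formula computation \cite{Moreno}: apply Perron's formula to $-L'/L(s,\mathrm{Sym}^n\pi,K)$, truncate at a height $T$ that is a small fixed power of $x$ (so that, using $|\Lambda_{\mathrm{Sym}^n\pi}(\a)|\le(n+1)\Lambda_K(\a)$, the truncation error is $o(h)$), bound the nontrivial zeros $\rho=\beta+i\gamma$ with $|\gamma|\le T$ via $\sum_{|\gamma|\le T}|(x+h)^\rho-x^\rho|/|\rho|\le h\sum_{|\gamma|\le T}x^{\beta-1}$, and insert the zero-free region of Lemma~\ref{1.9} and the density estimate of Corollary~\ref{cor:lfzde}; the constraint on $\delta$ is exactly what keeps the density exponent $\Cr{c1}(n+1)^4$ from overtaking $\log x$, so that the zero sum is controlled by the edge of the zero-free region and is $\ll h\exp(-c_0/([K:\Q]\delta))$.

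Finally I would assemble. Inserting the core step into the reduction and using $|b_n|\le Bb_0$,
\[
\sum_{\substack{x<\mathrm{N}\p\le x+h\\ \textup{$\pi_{\p}$ unramified}}}{\bf 1}_I(\cos\theta_\p)\log\mathrm{N}\p\ \geq\ b_0h(1+o(1))\ -\ Bb_0h\sum_{n=1}^N\exp\Big(-\frac{c_0}{[K:\Q]\,\delta}\Big)\ \geq\ \tfrac{1}{2}b_0h
\]
for $x$ large, provided $NB\exp(-c_0/([K:\Q]\delta))<\tfrac{1}{4}$, i.e.\ provided $\delta\ll 1/([K:\Q]\log(3BN))$; together with the constraint $\delta\le 1/(2\Cr{c1}(N+1)^4[K:\Q])$ of the core step and $(N+1)^4\le 16N^4$, this is guaranteed by the single hypothesis $\delta\le\Cr{satotatec1}/(N^4[K:\Q]\log(3BN))$ with $\Cr{satotatec1}$ a sufficiently small absolute constant. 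The matching upper bound follows from ${\bf 1}_I\le1$ and the $n=0$ estimate, so the sum is $\asymp h$ with implied constant depending on $B$, $I$, $K$ (and, through exceptional zeros of $L(s,\mathrm{Sym}^n\pi,K)$ for $2\le n\le N$, ineffectively on $\pi$). I expect the only genuine obstacle to be this bookkeeping: checking that the $N+1$ error terms, weighted by coefficients as large as $Bb_0$, still sum to at most $\tfrac{1}{2}b_0h$, and that the two constraints on $\delta$ — one from the degree $n+1$ of $\mathrm{Sym}^n\pi$ in the density exponent, the other from the number and size of the minorizing coefficients — merge into $\delta\ll 1/(N^4[K:\Q]\log(3BN))$. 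For the closing assertions, the automorphy hypotheses hold for $n\le4$ by \cite{GJ,Kim,KS1,KS2} and for $n\le8$ over $\Q$ with $\pi$ a newform by \cite{ClozelThorne}; in the latter case GRC is Deligne's bound, and in the remaining cases GRC can be removed by the standard device of bounding separately the sparse set of primes at which $\pi$ fails to be tempered, using the available bounds toward the Ramanujan conjecture.
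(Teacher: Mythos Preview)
Your proposal is essentially the paper's own argument: reduce via the $\mathrm{Sym}^N$-minorant, run the explicit-formula/Perron computation for each $L(s,\mathrm{Sym}^n\pi,K)$ with the standard zero-free region (Lemma~\ref{1.9}) and the log-free density estimate (Corollary~\ref{cor:lfzde}), and then tune $\delta$ so that the $N$ error terms, each weighted by at most $Bb_0$, sum to less than $\tfrac12 b_0 h$.  The two constraints on $\delta$ you identify (from the density exponent $\Cr{c1}(n+1)^4$ and from the factor $BN$) are exactly what the paper balances; the paper just packages this by enlarging $\Cr{c1}$ rather than writing out $\exp(-c_0/([K:\Q]\delta))$.

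Two small points where you diverge from the paper.  First, for the upper bound the paper invokes Brun--Titchmarsh rather than the $n=0$ case of the explicit formula; your route is fine, but note that your claim ``$\sum_{x<\mathrm{N}\a\le x+h}\Lambda_K(\a)=h(1+o(1))$'' is stronger than what the Hoheisel machinery actually delivers (it gives only $\asymp h$, since the error is a fixed small multiple of $h$, not $o(h)$).  This does not affect the conclusion, since $\ll h$ is all that is needed.  Second, your reading of the closing ``unconditional'' sentence is off: the theorem keeps GRC as a standing hypothesis (it is needed even to define $\theta_\p$ and to apply \eqref{eqn:symn-minor1} pointwise), and ``unconditional'' refers only to the automorphy hypothesis $\mathrm{Sym}^n\pi\in\mathcal{A}_{n+1}(K)$ being known in those cases.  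There is no need---and no mechanism here---to ``remove GRC by bounding the sparse set of non-tempered primes.''
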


\begin{remark1}
1. For any fixed $n$, determining the intervals $I$ that can be $\mathrm{Sym}^N$-minorized is an elementary combinatorial problem.  We carry this out in Lemma \ref{lem:sym4-minorized} to determine the intervals that can be $\mathrm{Sym}^4$-minorized, which we consider to be the most interesting case; it turns out that the proportion of subintervals of $[-1,1]$ which can be $\mathrm{Sym}^4$-minorized is roughly $0.388$.  If one is not concerned with obtaining the optimal minorant or if $N$ is large, it is likely more convenient to apply a standard minorant for $I$ instead.  For the Beurling-Selberg minorant (see Montgomery \cite[Lecture 1]{Montgomery10}), a tedious calculation shows that if $N\geq4(1+\delta)/\mu_{\mathrm{ST}}(I)-1$ for some $\delta>0$, then $I$ can be $\mathrm{Sym}^N$-minorized with
\[
B\leq \frac{2+3/\delta}{\mu_{\mathrm{ST}}(I)}.
\]
It follows that any interval can be $\mathrm{Sym}^N$-minorized for $N$ sufficiently large, and thus every interval is at least conditionally covered by Theorem \ref{thm:hoheisel_sato_tate}; Lemma \ref{lem:extreme-values} shows, however, that this minorant might be far from optimal.  With the Beurling-Selberg minorant, we prove unconditional results for intervals $I$ satisfying $\mu_{\mathrm{ST}}(I)>\frac{4}{5}$.  By contrast, Lemma \ref{lem:sym4-minorized} implies unconditional results for all intervals satisfying $\mu_{\mathrm{ST}}(I) \geq 0.534$, and for some with measure as small as $0.139$.

2. It is tempting to ask whether one can exploit existing results on potential automorphy for symmetric power $L$-functions and the explicit dependence on the base field in Theorem \ref{main-theorem} to obtain unconditional, albeit weaker, results for all subintervals of $[-1,1]$.  The proof of the Sato-Tate conjecture crucially relies on the work of Moret-Bailly \cite{Moret-Bailly} establishing the existence of number fields over which certain varieties have points.  The proof of this result unfortunately only permits control over the ramification at finitely many places, so it is not possible to even obtain bounds on the discriminants of the fields over which the symmetric power $L$-functions are automorphic.  Thus, the authors do not believe it is possible to obtain an unconditional analogue of Theorem \ref{thm:hoheisel_sato_tate} for all $I$ at this time.
\end{remark1}

As mentioned earlier, Theorem \ref{main-theorem} also allows us to access Linnik-type questions.  As one such example, we consider an analogue of Linnik's theorem in the context of the Sato-Tate conjecture.  One complication in the proof of Linnik's theorem that is not seen in Hoheisel's is the possible existence of a so-called Siegel zero for some Dirichlet $L$-function $L(s,\chi)$.  In order to handle this possible contribution (as one must, since Linnik's theorem is unconditional), two facts are used: there is at most one character $\chi\pmod{q}$ whose associated $L$-function has a Siegel zero, and every coefficient in the $\pmod{q}$ Fourier decomposition of the indicator function of set $\{n\in\mathbb{Z}\colon n\equiv a\pmod{q}\}$ is of the same size.  Neither of these facts need be true for symmetric power $L$-functions $L(s,\mathrm{Sym}^n\pi,K)$ and the minorant \eqref{eqn:symn-minor1}, so we consequently say that the minorant \eqref{eqn:symn-minor1} {\bf does not admit Siegel zeros} if for every $1\leq n\leq N$ for which $L(s,\mathrm{Sym}^n \pi,K)$ has a Siegel zero, the coefficient $b_n$ satisfies $b_n\leq 0$.  (It happens that if $b_n\leq 0$, then the Siegel zero may be trivially ignored in the analysis.)  Finally, if a set $I\subseteq [-1,1]$ admits such a minorant, then we say that $I$ can be $\mathrm{Sym}^N$-minorized without admitting Siegel zeros.  We have suppressed the role of the representation $\pi$ in this terminology, but its presence will always be clear in context. 

\begin{theorem}
\label{thm:least_sato_tate}
Assume the notation of Theorem \ref{thm:hoheisel_sato_tate}, and in particular that $I\subset[-1,1]$ can be $\mathrm{Sym}^N$-minorized.  Let $\pi\in\mathcal{A}_2(K)$ satisfy the hypotheses of Theorem \ref{thm:hoheisel_sato_tate}.  Suppose further that the $\mathrm{Sym}^N$-minorant admits no Siegel zeros.  If the Dedekind zeta function $\zeta_K(s)$ has no Siegel zero, then there exists an absolute constant $\Cl[abcon]{satotatec2}>0$ such that if $\mathrm{Sym}^n \pi \in\mathcal{A}_{n+1}(K)$ for $n\leq N$, then there is an unramified prime $\mathfrak{p}$ satisfying both $\cos\theta_{\p} \in I$ and
\[
\mathrm{N}\p \ll \max\Big\{N^N\mathfrak{q}(\pi)^{N^3},D_K [K:\Q]^{[K:\Q]}\Big\}^{\Cr{satotatec2}N^4 \log(3BN)}.
\]
If $\pi$ is associated to a non-CM newform over $\mathbb{Q}$ with squarefree level, then this may be improved to
\[
\mathrm{N}\p \ll (N \mathfrak{q}(\pi))^{\Cr{satotatec2}N^5 \log(3BN)}.
\]
\end{theorem}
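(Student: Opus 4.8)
The plan is to carry out the Linnik-type argument, which runs parallel to the proof of Theorem \ref{thm:hoheisel_sato_tate} with $h\asymp x$: form a non-negative weighted sum over primes $\p$ with $\cos\theta_\p\in I$, isolate a main term coming from the trivial symmetric power, and bound the contributions of $\mathrm{Sym}^1\pi,\dots,\mathrm{Sym}^N\pi$ using the standard zero-free region of Lemma \ref{1.9} together with Corollary \ref{cor:lfzde}, tracking all dependence on $N$, $B$, $\q(\pi)$, $D_K$, and $[K:\Q]$. The only genuinely new feature relative to Theorem \ref{thm:hoheisel_sato_tate} is that, since we now want an explicit numerical threshold rather than ``$x$ sufficiently large'', the possible Siegel zeros of $\zeta_K$ and of the $\mathrm{Sym}^n\pi$ can no longer be absorbed into an implicit constant; the hypotheses that $\zeta_K$ has no Siegel zero and that the $\mathrm{Sym}^N$-minorant admits no Siegel zeros are exactly what let us dispose of them.

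Fix a smooth weight $w\geq0$ with $w(t)=\exp(-(\log t)^2/2)$, concentrated near its argument $\asymp1$ and with $\widetilde w(s)=\sqrt{2\pi}e^{s^2/2}$ of Gaussian decay on vertical lines, and set
\[
S_I(x):=\sum_{\pi_\p\textup{ unramified}}{\bf 1}_I(\cos\theta_\p)(\log\mathrm{N}\p)\,w(\mathrm{N}\p/x).
\]
Since $\pi$ has trivial central character and satisfies GRC, the Satake parameters at unramified $\p$ are $e^{\pm i\theta_\p}$, so $U_n(\cos\theta_\p)=\lambda_{\mathrm{Sym}^n\pi}(\p)$ and $\Lambda_{\mathrm{Sym}^n\pi}(\p)=\lambda_{\mathrm{Sym}^n\pi}(\p)\log\mathrm{N}\p$. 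Plugging the minorant \eqref{eqn:symn-minor1} into $S_I(x)$ yields
\[
S_I(x)\ \geq\ \sum_{n=0}^N b_n\,\psi_w^{(n)}(x)\ -\ \sum_{n=0}^N|b_n|\,R_n,\qquad \psi_w^{(n)}(x):=\sum_{\mathfrak a}\Lambda_{\mathrm{Sym}^n\pi}(\mathfrak a)\,w(\mathrm{N}\mathfrak a/x),
\]
where $R_n$ collects the ramified primes and the prime powers $\mathrm{N}\p^k$ with $k\geq2$. Under GRC the Satake parameters of $\mathrm{Sym}^n\pi$ have modulus $\leq1$, so $R_n\ll(n+1)\big([K:\Q]\sqrt x\log x+(\log x)\log\q(\mathrm{Sym}^n\pi)\big)$; multiplying by $|b_n|\leq Bb_0$ and summing, this is negligible against $b_0x$ once $x$ exceeds a small fixed power of $N[K:\Q]B$ and of $\q(\mathrm{Sym}^N\pi)$.

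For $n=0$, $\psi_w^{(0)}(x)$ is a smoothed prime-ideal count for $K$; since $\zeta_K$ has no Siegel zero, the explicit formula together with Corollary \ref{cor:lfzde} applied to $\zeta_K$ gives $\psi_w^{(0)}(x)\geq\tfrac12\widetilde w(1)x$ as soon as $x\geq(D_K[K:\Q]^{[K:\Q]})^c$. For $1\leq n\leq N$, $\mathrm{Sym}^n\pi$ is cuspidal (as $\pi$ is non-CM and genuine) and self-dual (trivial central character), so $L(s,\mathrm{Sym}^n\pi,K)$ is entire and $\psi_w^{(n)}(x)=-\sum_\rho\widetilde w(\rho)x^\rho+O(\log\q(\mathrm{Sym}^n\pi))$. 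If $b_n\leq0$ and $\mathrm{Sym}^n\pi$ has a Siegel zero $\beta_n$, the corresponding term $-b_n\widetilde w(\beta_n)x^{\beta_n}$ is $\geq0$ and may be discarded; if $b_n>0$, the hypothesis that the $\mathrm{Sym}^N$-minorant admits no Siegel zeros rules out an exceptional zero of $\mathrm{Sym}^n\pi$. In either case every remaining zero satisfies $\beta\leq1-c/\log(\q(\mathrm{Sym}^n\pi)(|\gamma|+3)^{[K:\Q]})$ by Lemma \ref{1.9}; decomposing $\sum_\rho$ into dyadic ranges of $|\gamma|$, applying Corollary \ref{cor:lfzde} in each, substituting $u=1-\sigma$, and using the Gaussian decay of $\widetilde w$ to retain only the bottom ranges, I obtain, provided $x>\q(\mathrm{Sym}^n\pi)^{c(n+1)^4}$,
\[
b_n\psi_w^{(n)}(x)\ \geq\ -|b_n|\,x\log x\exp\!\Big(c(n+1)^4-\frac{c\log x}{\log\q(\mathrm{Sym}^n\pi)}\Big)\ -\ |b_n|\log\q(\mathrm{Sym}^n\pi).
\]

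Assembling, write $\mathcal Q_n:=\max\{\q(\mathrm{Sym}^n\pi),D_K[K:\Q]^{[K:\Q]}\}$; using $|b_n|\leq Bb_0$ and the monotonicity of the last bound in $n$, the displayed estimate forces $\sum_{n=1}^N b_n\psi_w^{(n)}(x)\geq-\tfrac14\widetilde w(1)b_0x$ as soon as $\log x\gg\big((N+1)^4+\log(3BN)\big)\log\mathcal Q_N\asymp N^4\log(3BN)\log\mathcal Q_N$ (absorbing a harmless $\log\log x$). Together with the $n=0$ term this gives $S_I(x)\geq\tfrac14\widetilde w(1)b_0x>0$, so some unramified $\p$ with $\mathrm{N}\p\ll x$ satisfies $\cos\theta_\p\in I$. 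Inserting the standard conductor bound $\q(\mathrm{Sym}^n\pi)\ll n^{cn}\q(\pi)^{cn^3}$ (so $\log\mathcal Q_N\ll N\log N+N^3\log\q(\pi)+\log(D_K[K:\Q]^{[K:\Q]})$) converts the condition on $x$ into the stated bound; for a non-CM newform over $\Q$ of squarefree level one has the sharper $\q(\mathrm{Sym}^n\pi)\ll(n\,\q(\pi))^{cn}$, and since $D_K=[K:\Q]=1$ this yields the improvement $\mathrm{N}\p\ll(N\q(\pi))^{cN^5\log(3BN)}$. The main obstacle is the uniformity bookkeeping: choosing the weight and the dyadic decomposition so that neither the tail of the zero sum nor the effective truncation height contributes extraneous factors of $\log(3BN)$ or of $[K:\Q]$, and arranging the zero-free-region width, the density exponent $(n+1)^4$, and the conductor growth of $\mathrm{Sym}^n\pi$ to combine into the single clean exponent $N^4\log(3BN)$ simultaneously for all $n\leq N$, while the sign condition on the $b_n$ neutralizes each possible Siegel zero.
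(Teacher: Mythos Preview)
Your strategy is the same as the paper's: run the Linnik argument by taking $h\asymp x$ in the Hoheisel setup of Theorem~\ref{thm:hoheisel_sato_tate}, use the hypotheses to discard the Siegel-zero contributions, and finish with conductor bounds for $\mathrm{Sym}^n\pi$. The paper's execution is more economical, however: rather than redoing the explicit formula with a smooth Gaussian weight, it simply invokes the lower bound~\eqref{eqn:hoh_linnik} already established at the end of the proof of Theorem~\ref{thm:hoheisel_sato_tate}, observes that under the stated hypotheses the $o_{h\to\infty}(1)$ Siegel-zero term may be omitted (using Hoffstein--Ramakrishnan to restrict which $n$ could have a Siegel zero, together with the sign condition on the $b_n$), takes $h=x$, and then bounds the resulting threshold $x$ via the conductor estimates $\log q(\mathrm{Sym}^n\pi)\ll n^3\log q(\pi)$ (Rouse) and the archimedean bound from Moreno--Shahidi, with the Cogdell--Michel improvement $\log q(\mathrm{Sym}^n\pi)=n\log q(\pi)$ in the squarefree newform case.

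Your smooth-weight version is fine in principle, but the intermediate bookkeeping slips. The zero-free region of Lemma~\ref{1.9} for $L(s,\mathrm{Sym}^n\pi,K)$ has width $\Cr{zero-free}/\mathcal{L}$ with $\mathcal{L}=(n+1)^4\log\mathcal Q_n$, so after integrating against Corollary~\ref{cor:lfzde} the zero sum is $\ll\exp\big(-c\log x/((n+1)^4\log\mathcal Q_n)\big)$, not $\exp\big(c(n+1)^4-c\log x/\log\q(\mathrm{Sym}^n\pi)\big)$; the $(n+1)^4$ belongs in the denominator of the $\log x$ term, not as a separate additive piece. With the correct placement, the requirement that this quantity beat the factor $BN$ (coming from $\sum_n|b_n|\leq b_0BN$) reads $\log x\gg N^4\log(3BN)\log\mathcal Q_N$ directly, and your asserted asymptotic $(N+1)^4+\log(3BN)\asymp N^4\log(3BN)$, which is false as written, becomes unnecessary. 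Once this is fixed your argument gives the same exponent as the paper.
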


\begin{remark1}
1.  Even if $\zeta_K(s)$ has a Siegel zero, we can still prove an effective bound for the least norm of a prime ideal in the Sato-Tate conjecture, but the bound will have a less desirable dependence on $K$.  See the remark following the proof of Theorem \ref{thm:least_sato_tate}.

2.  When $I$ is fixed and $\pi$ varies, the bound in Theorem \ref{thm:least_sato_tate} has the shape $\mathrm{N}\mathfrak{p} \leq \mathfrak{q}(\pi)^A$ for some absolute constant $A$, and so is comparable to Linnik's theorem.  However, if $\pi$ is fixed and $I$ is varying, the dependence is much worse.  This comes partially from the constants in the zero-free region for $L(s,\mathrm{Sym}^n\pi,K)$, where the $n$ dependence in particular is of the form $n^4$.  Without improving the quality of these constants, it seems likely that only minor improvements can be made to Theorem \ref{thm:least_sato_tate}.

3.  Suppose that $\pi\in\mathcal{A}_2(K)$ is self-dual.  It follows from work of Hoffstein and Ramakrishnan \cite{Hoffstein} that neither $L(s,\pi,K)$ nor $L(s,\mathrm{Sym}^2\pi,K)$ has a Siegel zero.  In fact, their proof of Theorem B also shows that if $L(s,\mathrm{Sym}^j \pi, K)$ is automorphic for $j \in \{n-2,n,n+2\}$, then $L(s,\mathrm{Sym}^n \pi, K)$ does not have a Siegel zero.  From the known automorphy results mentioned earlier, it follows that $L(s,\mathrm{Sym}^n\pi,K)$ does not have a Siegel zero for $n=1$ and $2$, and additionally $n=3$, $4$, $5$, and $6$ if $K \cap \mathbb{Q}(e^{2\pi i/35}) = \mathbb{Q}$. 

4.  Following the ideas of Moreno \cite[Theorem 4.2]{Moreno2}, we could prove a version of the zero repulsion phenomenon of Deuring and Heilbronn for $L(s,\pi\otimes\pi',K)$.  Such a result would allow us to weaken the definition of $I$ not admitting Siegel zeros.  In particular, we would only need to require that for every $1\leq n\leq N$ such that $L(s,\mathrm{Sym}^n\pi, K)$ has a Siegel zero, the coefficient $b_n$ satisfies $b_n\leq b_0$.  Since this does not completely eliminate the Siegel zero contribution, we do not carry out this computation.

5.  If $K=\Q$, one may use Corollary \ref{cor:lfzde2} instead of Corollary \ref{cor:lfzde} in the proof of Theorem \ref{thm:least_sato_tate}.  This would produce a bound on the least prime $p\equiv a\pmod q$ such that $\cos\theta_p\in I$.
\end{remark1}

This paper is organized as follows.  In Section \ref{sec:definitions}, we discuss the basic properties of automorphic $L$-functions that we will use in the proofs of the theorems; we also prove a few useful lemmas.  In Section \ref{sec:proof-main}, we prove Theorems \ref{main-theorem} and \ref{main-theorem-2}.  In Section \ref{sec:applications}, we prove Theorems \ref{effective-hoheisel}, \ref{thm:hoheisel_sato_tate}, and \ref{thm:least_sato_tate}. In Section \ref{sec:auto_BV}, we prove Theorem \ref{thm:auto_BV}.

\section*{Acknowledgements}
\noindent
This research began while the authors visited the Centre de Recherches Math\'ematiques for the analytic semester of the thematic year on number theory.  The authors are grateful for this opportunity and would particularly like to thank Andrew Granville, Chantal David, and Dimitris Koukoulopoulos for their generous hospitality.  We also thank Valentin Blomer for his comments and for bringing the work of Motohashi to our attention, Sary Drappeau for bringing the work of Clozel and Thorne to our attention, and the anonymous referee for carefully reading this paper and providing many useful and insightful comments.

\section{Preliminaries}
\label{sec:definitions}

\subsection{Definitions and notation}
\label{subsec:definitions}

We follow the account of Rankin-Selberg $L$-functions given by Brumley \cite[Section 1]{Brumley}.  Let $K/\Q$ be a number field of absolute discriminant $D_K$.  Let $\mathbb{A}_K$ denote the ring of adeles over $K$, and let $\mathcal{A}_d(K)$ be the set of cuspidal automorphic representations of $\mathrm{GL}_d(\mathbb{A}_K)$ with unitary central character.

Let $\pi\in\mathcal{A}_d(K)$.  We have the factorization $\pi=\otimes_{\mathfrak{v}}\pi_{\mathfrak{v}}$ over the places of $K$.  For each nonarchimedean $\mathfrak{p}$, we have the Euler factor
\begin{equation*}
L_{\p}(s,\pi,K)=\prod_{j=1}^d(1-\alpha_{\pi}(j,\p)\mathrm{N}\p^{-s})^{-1}
\end{equation*}
associated with $\pi_\p$.  Let $R_\pi$ be the set of prime ideals $\p$ for which $\pi_\p$ is ramified.  We call $\alpha_{\pi}(j,\p)$ the local roots of $L(s,\pi,K)$ at $\p$, and if $\p\notin R_{\pi}$, then $\alpha_{\pi}(j,\p)\neq0$ for all $1\leq j\leq d$.  The representation $\pi$ has an associated automorphic $L$-function whose Euler product and Dirichlet series are given by
\begin{equation*}
L(s,\pi,K)=\prod_{\p}L_{\p}(s,\pi,K)=\sum_{\a}\frac{\lambda_{\pi}(\a)}{\mathrm{N}\a^{s}},
\end{equation*}
where $\p$ runs through the finite primes and $\a$ runs through the non-zero integral ideals of $K$.  This Euler product converges absolutely for $\re(s)>1$, which implies that $|\alpha_{\pi}(j,\p)|<\mathrm{N}\p$.  Luo, Rudnick, and Sarnak \cite[Theorem 2]{LRS} showed that if $\p\notin R_{\pi}$, then
\begin{equation}
\label{eqn:LRS}
|\alpha_{\pi}(j,\p)|\leq\mathrm{N}\p^{\frac{1}{2}-\frac{1}{d^2+1}},
\end{equation}
and M{\"u}ller and Speh \cite{MS} proved that this holds for all primes $\p$.  The generalized Ramanujan conjecture (GRC) asserts a further improvement.

\begin{hypothesis*}[GRC]
\label{GRC}
Assume the above notation.  For each prime $\p\notin R_{\pi}$, we have $|\alpha_{\pi}(j,\p)|=1$, and for each prime $\p\in R_{\pi}$, we have $|\alpha_{\pi}(j,\p)|\leq 1$.
\end{hypothesis*}
\begin{remark}
It is expected that all automorphic $L$-functions $L(s,\pi,K)$ satisfy GRC.  Indeed, it is already known for many of the most commonly used automorphic $L$-functions.  Such $L$-functions include Hecke $L$-functions and the $L$-function of a cuspidal normalized Hecke eigenform of positive even integer weight $k$ on the congruence subgroup $\Gamma_0(N)$.
\end{remark}

At each archimedean place $\v$, we associate to $\pi_\v$ a set of $n$ complex numbers $\{\mu_{\pi}(j,\v)\}_{j=1}^{d}$, often called Langlands parameters, which are known to satisfy
\[
\re(\mu_\pi(j,\v))>-\frac{1}{2}+\frac{1}{d^2+1}
\]
by the work of Luo, Rudnick, and Sarnak \cite{LRS}.  The local factor at $\v$ is defined to be
\[
L_{\v}(s,\pi,K)=\prod_{j=1}^{d}\Gamma_{K_{\v}}(s+\mu_{\pi}(j,\v)),
\]
where $\Gamma_{\R}(s)=\pi^{-s/2}\Gamma(\frac{s}{2})$ and $\Gamma_{\mathbb{C}}(s)=\Gamma_{\R}(s)\Gamma_{\R}(s+1)$. Letting $S_{\infty}$ denote the set of archimedean places, we define the gamma factor of $L(s,\pi,K)$ by
\[
L_{\infty}(s,\pi,K)=\prod_{\v\in S_{\infty}}L_{\v}(s,\pi,K).
\]
For notational convenience, we will define the complex numbers $\kappa_{\pi}(j)$ by
\begin{equation}
\label{eqn:kappa}
	L_{\infty}(s,\pi,K)=\prod_{j=1}^{d [K:\Q]}\Gamma_{\R}(s+\kappa_{\pi}(j)).
\end{equation}

Any automorphic $L$-function $L(s,\pi,K)$ admits a meromorphic continuation to $\mathbb{C}$ with poles possible only at $s=0$ and $1$. Let $r(\pi)$ denote the order of the pole at $s=1$, and define the completed $L$-function
\begin{equation*}
\Lambda(s,\pi,K)=(s(1-s))^{r(\pi)} q(\pi)^{s/2}L_{\infty}(s,\pi,K)L(s,\pi,K),
\end{equation*}
where $q(\pi)$ is the conductor of $\pi$.  (Note that $D_K^d$ divides $q(\pi)$.)  It is well-known that $\Lambda(s,\pi,K)$ is an entire function of order 1 and that there exists a complex number $\varepsilon(\pi)$ of modulus 1 such that $\Lambda(s,\pi,K)$ satisfies the functional equation
\begin{equation*}
\Lambda(s,\pi,K)=\varepsilon(\pi)\Lambda(1-s,\tilde{\pi},K),
\end{equation*}
where $\tilde{\pi}$ is the representation contragredient to $\pi$.  For each $\mathfrak{p}$, we have that $\{\alpha_{\tilde{\pi}}(j,\p)\colon1\leq j\leq d\}=\{\overline{\alpha_{\pi}(j,\p)}\colon 1\leq j\leq d\}$.  Moreover, 
\begin{equation*}
L_{\infty}(s,\tilde{\pi},K)=L_{\infty}(s,\pi,K)~\text{ and }~ q(\tilde{\pi})=q(\pi).
\end{equation*}

To maintain uniform estimates for the analytic quantities associated to $L(s,\pi,K)$, we define the {\bf analytic conductor} of $L(s,\pi,K)$ by
\begin{equation}
\label{eqn:analytic_conductor}
\q(s,\pi)=q(\pi)\prod_{j=1}^{d [K:\Q]}(|s+\kappa_{\pi}(j)|+3).
\end{equation}
We will frequently make use of the quantity $\q(0,\pi)$, which we denote by $\q(\pi)$.

As in the introduction, define the von Mangoldt function $\Lambda_\pi(\a)$ by
\begin{equation*}
-\frac{L'}{L}(s,\pi,K)=\sum_{\a}\frac{\Lambda_{\pi}(\a)}{\mathrm{N}\a^{s}},
\end{equation*}
and let $\Lambda_K(\a)$ be that associated to the Dedekind zeta function $\zeta_K(s)$.  We then have that
\[
\Lambda_{\pi}(\a)=\lambda_{\pi}(\a)\Lambda_K(\a).
\]
Using the bounds for $|\alpha_{\pi}(j,\p)|$ from \cite{LRS,MS}, we have that
\begin{equation}
\label{eqn:almost-GRC}
|\Lambda_{\pi}(\a)|\leq d\Lambda_K(\a)\mathrm{N}\a^{\frac{1}{2}-\frac{1}{d^2+1}}
\end{equation}
for every ideal $\a$, and under GRC, we have $|\Lambda_{\pi}(\a)|\leq d\Lambda_K(\a)$. 

Consider two representations $\pi\in\mathcal{A}_d(K)$ and $\pi'\in\mathcal{A}_{d'}(K)$.  We are interested in the Rankin-Selberg product $\pi\otimes \pi^\prime$ of $\pi$ and $\pi^\prime$, which, at primes $\mathfrak{p}\notin R_\pi \cup R_{\pi^\prime}$, has a local factor given by
\[
L_{\p}(s,\pi\otimes\pi',K) = \prod_{j_1=1}^{d}\prod_{j_2=1}^{d'}(1-\alpha_{\pi}(j_1,\p)\alpha_{\pi'}(j_2,\p)\mathrm{N}\p^{-s})^{-1}.
\]
For $\p\in R_{\pi}\cup R_{\pi'}$, we write the local roots as $\beta_{\pi\otimes\pi'}(j,\p)$ with $1\leq j\leq d'd$, and for each such $\p$ we define
\[
L_{\p}(s,\pi\otimes\pi',K)=\prod_{j=1}^{d'd}(1-\beta_{\pi\otimes\pi'}(j,\p)\mathrm{N}\p^{-s})^{-1}.
\]

This gives rise to the Rankin-Selberg $L$-function $L(s,\pi\otimes\pi',K)$, whose Euler product and gamma factor are given by
\[
L(s,\pi\otimes\pi',K)=\prod_{\p}L_{\p}(s,\pi\otimes\pi',K)
\]
and
\[
L_{\infty}(s,\pi\otimes\pi',K)=\prod_{\v\in S_{\infty}}\prod_{j_1=1}^{d}\prod_{j_2=1}^{d'}\Gamma_{K_{\v}}(s+\mu_{\pi\otimes\pi'}(j_1,j_2,\v))=\prod_{j=1}^{d'd [K:\Q]}\Gamma_{\R}(s+\kappa_{\pi\otimes\pi'}(j)),
\]
where (again by \cite{LRS})
\begin{equation}
\label{eqn:selberg}
\re(\kappa_{\pi\otimes\pi'}(j))\geq-1+\frac{1}{d^2+1}+\frac{1}{(d^\prime)^2+1}.
\end{equation}

\begin{remark}
It is possible that there may be trivial zeros of $L(s,\pi\otimes\pi^\prime,K)$ inside the critical strip, which would arise from poles of $L_{\infty}(s,\pi\otimes\pi',K)$.  However, it follows from \eqref{eqn:selberg} that Theorem \ref{main-theorem} accounts for these zeros.
\end{remark}

By \cite[Equation 8]{Brumley}, we have
\begin{equation}
\label{eqn:ac-ineq}
\q(s,\pi\otimes\pi')\leq\q(\pi)^{d'}\q(\pi')^d (|s|+3)^{d'd [K:\Q]}.
\end{equation}
Finally, we note that if $\pi'\cong\tilde{\pi}$, then the order $r(\pi\otimes\pi^\prime)$ of the pole at $s=1$ of $L(s,\pi\otimes\pi',K)$ is $1$.

\subsection{Preliminary lemmas}
We begin with a zero-free region for $L(s,\pi\otimes\pi',K)$, obtained by adapting Theorem 5.10 of Iwaniec and Kowalski \cite{IK} to $L$-functions over arbitrary number fields.  Recall the notation $\mathcal{D}$ and $\mathcal{Q}$ introduced in Theorem \ref{main-theorem}.

\begin{lemma}
\label{1.9}
Suppose either that at least one of $d$ and $d'$ equals one or that at least one of $\pi$ and $\pi'$ is self-dual.  Let $T\geq 1$, and let
\begin{equation}
\label{eqn:script_L}
\mathcal{L}=\mathcal{L}(T,\pi\otimes\pi',K)= \mathcal{D}\log(\mathcal{Q}T^{[K:\Q]}).
\end{equation}
There is a positive absolute constant $\Cl[abcon]{zero-free}$\footnote{We denote by $c_1,c_2,\dots$ a sequence of constants, each of which is absolute, positive, and effectively computable.  We do not recall this convention in future statements, as we find it to be notationally cumbersome.} such that the region
\[
\{s=\sigma+it\colon\sigma\geq 1-\Cr{zero-free}\mathcal{L}^{-1},|t|\leq T\}
\]
contains at most one zero of $L(s,\pi\otimes\pi',K)$.  If such an exceptional zero $\beta_{\mathrm{exc}}$ exists, then it is real and simple, $L(s,\pi\otimes\pi',K)$ must be self-dual, and $\beta_{\mathrm{exc}}<1$.  We call such an exceptional zero $\beta_{\mathrm{exc}}$ a Siegel zero.
\end{lemma}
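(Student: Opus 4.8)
The plan is to adapt the classical zero-free region argument for $\zeta(s)$ (Hadamard--de la Vall\'ee Poussin), in the form given as Theorem 5.10 of Iwaniec--Kowalski, to the Rankin--Selberg setting over a number field, keeping careful track of the conductor and archimedean data through the quantities $\mathcal{D}$, $\mathcal{Q}$, and $\mathcal{L}$. First I would record the key positivity input: the Dirichlet coefficients $\Lambda_{\pi\otimes\pi'}(\a)$ are \emph{nonnegative} when $\pi'\cong\tilde\pi$ (and more generally one works with the auxiliary $L$-function $L(s,\pi,K)L(s,\pi\otimes\pi',K)^{?}\cdots$; but under the hypothesis that one of $d,d'$ is $1$ or one of $\pi,\pi'$ is self-dual, the relevant product of $L$-functions whose logarithmic derivative has nonnegative coefficients is available, e.g. $\zeta_K(s)^{??}$ replaced by the isobaric square). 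The standard device is the inequality $3 + 4\cos\theta + \cos 2\theta \ge 0$ applied to the Euler product of the auxiliary $L$-function $L(s,\Pi,K)$ where $\Pi$ is built from $\mathbf{1}\boxplus\pi\otimes\pi'$ (or $\mathbf{1}\boxplus(\pi\otimes\pi')\boxplus(\pi\otimes\pi')^{\vee}$-type combinations), exploiting that $L(s,\pi\otimes\pi',K)$ is entire except possibly for the pole at $s=1$ when $\pi'\cong\tilde\pi$.

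Next I would carry out the standard three-line argument. Using the Hadamard factorization of the completed $L$-function $\Lambda(s,\pi\otimes\pi',K)$ (entire of order $1$), one obtains, for $s=\sigma+it$ with $\sigma$ slightly larger than $1$,
\[
-\re\frac{L'}{L}(\sigma+it,\pi\otimes\pi',K) \le \tfrac12\log\q(s,\pi\otimes\pi') + O(1) - \sum_{\rho}\re\frac{1}{\sigma+it-\rho},
\]
and each term in the sum over nontrivial zeros $\rho$ is nonnegative since $\re(s-\rho)>0$. Pick a hypothetical zero $\rho_0=\beta_0+i\gamma_0$ with $\beta_0$ close to $1$ and $|\gamma_0|\le T$; combining the bound at $\sigma+i\gamma_0$ (dropping all zeros but $\rho_0$), at $\sigma+2i\gamma_0$ (dropping all zeros), and at $\sigma$ (using the pole at $s=1$ if present, i.e. the $\frac{r(\pi\otimes\pi')}{\sigma-1}$ term, which is $\le \frac{1}{\sigma-1}$ and has the favorable sign), via the $3+4\cos\theta+\cos2\theta$ weighting, yields
\[
\frac{4}{\sigma-\beta_0} \le \frac{3}{\sigma-1} + C\,\mathcal{D}\log\big(\mathcal{Q}(|\gamma_0|+3)^{[K:\Q]}\big) + O(1),
\]
where the logarithmic term comes from bounding $\log\q(s,\pi\otimes\pi')$ using \eqref{eqn:ac-ineq}, and the $\mathcal{D}$ reflects the number $d'd[K:\Q]$ of gamma factors. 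Choosing $\sigma-1$ a small constant multiple of $1/\mathcal{L}$ and using $|\gamma_0|\le T$ so that $\log(\mathcal{Q}(|\gamma_0|+3)^{[K:\Q]})\ll \mathcal{L}/\mathcal{D}$, one extracts $\beta_0 \le 1 - c\,\mathcal{L}^{-1}$ for a suitable absolute constant $c=\Cr{zero-free}$. A separate, easier argument handles zeros with $|\gamma_0|$ bounded away from $0$ (no possible exception there), while the single possible real exceptional zero near $s=1$ is the familiar one arising when $L(s,\pi\otimes\pi',K)$ is self-dual, where the $\cos$-trick degenerates; its reality, simplicity, and $\beta_{\mathrm{exc}}<1$ follow from the functional equation and the order-$1$ pole structure exactly as in the classical case.

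The genuinely delicate points, and the main obstacles, are twofold. First, one must verify that the auxiliary $L$-function used for the positivity step is honestly an $L$-function with a pole of the right order and nonnegative $-L'/L$ coefficients \emph{precisely under the stated hypothesis} (one of $d,d'$ equal to $1$, or one of $\pi,\pi'$ self-dual); this is where self-duality is essential, since for general $\pi,\pi'$ the needed isobaric combination $\mathbf{1}\boxplus\pi\otimes\pi'\boxplus\widetilde{\pi\otimes\pi'}\boxplus(\pi\otimes\widetilde\pi)\boxplus(\pi'\otimes\widetilde{\pi'})$ may fail to be automorphic or fail to have the pole needed to push the zero off the line --- this is exactly the remark the authors flag after the lemma statement. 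Second, one must control the archimedean local factors uniformly: the gamma factors may have poles inside the critical strip (trivial zeros), so bounding $-\re(L_\infty'/L_\infty)$ and $\log\q(s,\pi\otimes\pi')$ requires the lower bounds on $\re\kappa_{\pi\otimes\pi'}(j)$ from \eqref{eqn:selberg} (equivalently the Luo--Rudnick--Sarnak bounds) to ensure the shifts $s+\kappa_{\pi\otimes\pi'}(j)$ stay in a region where $\Gamma_\R$ is well-behaved, and to absorb the resulting error into the $O(\mathcal{D})$ and $O(\mathcal{D}\log\mathcal{Q})$ terms without losing effectivity. Everything else --- the Borel--Carath\'eodory/Hadamard bookkeeping, counting zeros in unit boxes to control $\sum_\rho\re(s-\rho)^{-1}$ --- is routine and follows Iwaniec--Kowalski Theorem 5.10 mutatis mutandis, with $[K:\Q]$ appearing in the degree and $D_K \mid q(\pi\otimes\pi')$ folded into $\mathcal{Q}$.
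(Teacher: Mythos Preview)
Your proposal is correct and takes essentially the same approach as the paper: the paper's proof simply cites Iwaniec--Kowalski \S5.4 (which is exactly the Hadamard--de la Vall\'ee Poussin/$3+4\cos\theta+\cos2\theta$ argument you outline) and Moreno \cite[Section 3]{Moreno2}, noting that the conductor bound \eqref{eqn:ac-ineq} is the extra ingredient needed to track the dependence on $\pi,\pi',K$ through $\mathcal{D}$ and $\mathcal{Q}$. Your identification of the two genuinely delicate points---the availability of the auxiliary positivity $L$-function precisely under the stated self-duality/$d{=}1$ hypothesis, and the uniform control of the archimedean factors via \eqref{eqn:selberg}---is exactly what the referenced sources handle and what the paper leaves implicit.
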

\begin{remark}
	Note that from the definition of $\mathcal{L}$, we always have $\mathcal{L}\gg d'd[K:\Q]$.
\end{remark}
\begin{proof}
With the help of \eqref{eqn:ac-ineq}, the proofs are the same as those in \cite[Section 5.4]{IK} and \cite[Section 3]{Moreno2}.  See also \cite{MR2078295} and the sources contained therein.  
%
%
%If $\pi'$ is isomorphic to neither $\pi$ nor $\tilde{\pi}$, this follows from Lemma 5.9 and Exercise 4 in Chapter 5 of \cite{IK} by considering the auxiliary $L$-function
%\[
%L(s+it/2,\pi\otimes\pi',K)L(s,\pi\otimes\tilde{\pi}',K)L(s,\tilde{\pi}\otimes\pi',K)L(s-it/2,\pi\otimes\pi',K).
%\]
%In the remaining cases, we have that $d=d'$ and both $\pi$ and $\pi'$ are self-dual.  Thus one may use Moreno's zero-free region \cite[Theorem 3.3]{Moreno2}, which is stronger than the one presented here.  The proof is essentially the same if $d=1$ and $\pi$ is trivial, but we use the auxiliary $L$-function in the proof of \cite[Theorem 5.10]{IK}.
\end{proof}

\begin{lemma}
\label{1.10}
Let $T\geq0$, and let $\tau\in\R$ satisfy $|\tau|\leq T$.
\begin{enumerate}
\item Uniformly on the disk $|s-(1+i\tau)|\leq1/4$, we have that
\[
\frac{L'}{L}(s,\pi\otimes\pi',K)+\frac{r(\pi\otimes\pi')}{s}+\frac{r(\pi\otimes\pi')}{s-1}-\sum_{|\rho-(1+i\tau)|\leq1/2}\frac{1}{s-\rho}\ll \mathcal{L},
\]
where the sum runs over zeros $\rho$ of $L_S(s,\pi\otimes\pi^\prime,K)$.
\item For $1\geq \eta \gg \mathcal{L}^{-1}$, we have that
\[
\sum_{|\rho-(1+i\tau)|\leq \eta}1\ll \eta\mathcal{L},
\]
where the sum runs over zeros $\rho$ of $L_S(s,\pi\otimes\pi^\prime,K)$.
\end{enumerate}
\end{lemma}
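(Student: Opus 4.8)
Here is how I would approach Lemma~\ref{1.10}. Both parts are the Rankin--Selberg, number-field versions of the classical estimates for $L'/L$ and for counting zeros in disks, as in \cite[Section 5.3 and Proposition 5.7]{IK}; the role of \eqref{eqn:ac-ineq} is to let one replace the classical conductor by $\mathcal{Q}T^{[K:\Q]}$ and thereby fold all of the dependence on $\pi$, $\pi'$, and $K$ into $\mathcal{L}$.

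For (1), I would begin from the Hadamard factorization of $\Lambda(s,\pi\otimes\pi',K)$, which is entire of order $1$; taking its logarithmic derivative and peeling off the gamma factor, the conductor, and the poles at $s=0,1$ gives
\[
\frac{L'}{L}(s,\pi\otimes\pi',K)+\frac{r(\pi\otimes\pi')}{s}+\frac{r(\pi\otimes\pi')}{s-1}
= B-\tfrac12\log q(\pi\otimes\pi')-\frac{L_\infty'}{L_\infty}(s,\pi\otimes\pi',K)+\sum_{\rho}\Big(\frac{1}{s-\rho}+\frac{1}{\rho}\Big),
\]
the sum running over the nontrivial zeros and $B$ being the Hadamard constant. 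On $|s-(1+i\tau)|\le\tfrac14$ one has $\tfrac12\log q(\pi\otimes\pi')\ll\mathcal{L}$ by \eqref{eqn:ac-ineq} at $s=0$, and, using \eqref{eqn:selberg} together with Stirling for $\tfrac{\Gamma_\R'}{\Gamma_\R}$, the term $\frac{L_\infty'}{L_\infty}(s,\pi\otimes\pi',K)=\sum_j\tfrac{\Gamma_\R'}{\Gamma_\R}(s+\kappa_{\pi\otimes\pi'}(j))$ is $\ll\sum_j\log(|s+\kappa_{\pi\otimes\pi'}(j)|+3)\ll\log\q(s,\pi\otimes\pi')\ll\mathcal{L}$ away from its poles --- and those poles, which are exactly the $O(dd'[K:\Q])$ possible trivial zeros near $s=1$ permitted by \eqref{eqn:selberg}, are among the terms of the sum $\sum_{|\rho-(1+i\tau)|\le1/2}\tfrac{1}{s-\rho}$ in the statement, so after subtracting them the bound persists uniformly on the disk. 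To eliminate $B$ I would subtract the identity at the safe point $2+i\tau$, where the Dirichlet series of $-L'/L$ converges absolutely and, via the Cauchy--Schwarz bound $|\Lambda_{\pi\otimes\pi'}(\a)|\le\Lambda_{\pi\otimes\tilde\pi}(\a)^{1/2}\Lambda_{\pi'\otimes\tilde\pi'}(\a)^{1/2}$ and \eqref{eqn:almost-GRC}, one gets $\tfrac{\Lambda'}{\Lambda}(2+i\tau,\pi\otimes\pi',K)\ll\mathcal{L}$. What remains is $\sum_\rho\big(\tfrac{1}{s-\rho}-\tfrac{1}{2+i\tau-\rho}\big)$, which I split at $|\rho-(1+i\tau)|\le\tfrac12$: the near zeros contribute the sum in the statement plus $\ll\#\{\text{near zeros}\}\ll\mathcal{L}$, and the far zeros contribute $\ll\sum_\rho(1+|\rho-(1+i\tau)|^2)^{-1}\ll\mathcal{L}$.

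The two counting facts used here --- that $\#\{\rho:|\rho-w|\le\tfrac12\}\ll\mathcal{L}$ whenever $|\re w-1|\le1$, and that $\sum_\rho(1+|\rho-w|^2)^{-1}\ll\mathcal{L}$ --- I would obtain from Jensen's formula applied to $\Lambda(s,\pi\otimes\pi',K)$ on an $O(1)$-disk, using the convexity bound $|L(s,\pi\otimes\pi',K)|\ll\q(s,\pi\otimes\pi')^{O(1)}$ (Phragm\'en--Lindel\"of, from the Euler product on $\re s=1+\varepsilon$ and the functional equation), the lower bound $|L(2+i\tau,\pi\otimes\pi',K)|\gg1$, and the fact that the archimedean and conductor factors change by at most $e^{O(\mathcal{L})}$ across such a disk (again by \eqref{eqn:ac-ineq}), so that their strong decay does not spoil the estimate; a dyadic decomposition over $2^k\le|\rho-w|<2^{k+1}$ then yields the second. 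This is precisely the adaptation of \cite[Theorem 5.6]{IK} to number fields.

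For (2), I may assume $\eta\le\tfrac14$ (for $\eta\ge\tfrac14$ the claim follows from the $O(1)$-disk count, since $\eta\mathcal{L}\gg\mathcal{L}$), and set $s=1+\eta+i\tau$. The Hadamard product and the functional equation give $\re\tfrac{\Lambda'}{\Lambda}(s,\pi\otimes\pi',K)=\sum_\rho\re\tfrac1{s-\rho}$; every term is $\ge0$ since $\re\rho<1<\re s$, and $\re\tfrac1{s-\rho}\ge\tfrac1{4\eta}$ once $|\rho-(1+i\tau)|\le\eta$, whence
\[
\frac{1}{4\eta}\,\#\{\rho:|\rho-(1+i\tau)|\le\eta\}\ \le\ \re\frac{\Lambda'}{\Lambda}(1+\eta+i\tau,\pi\otimes\pi',K)\ =\ \re\frac{L'}{L}(1+\eta+i\tau,\pi\otimes\pi',K)+O(\mathcal{L}),
\]
the $O(\mathcal{L})$ absorbing $\re\tfrac{r(\pi\otimes\pi')}{s}\ll1$, $\re\tfrac{r(\pi\otimes\pi')}{s-1}\le\tfrac{r(\pi\otimes\pi')}{\eta}\ll\mathcal{L}$ (using $\eta\gg\mathcal{L}^{-1}$ and $r(\pi\otimes\pi')\le1$), $\tfrac12\log q(\pi\otimes\pi')\ll\mathcal{L}$, and $\re\tfrac{L_\infty'}{L_\infty}(s,\pi\otimes\pi',K)\ll\mathcal{L}$ (the real part of $\tfrac{\Gamma_\R'}{\Gamma_\R}$ is bounded \emph{above} by $\ll\log(|\cdot|+3)$ on $\re(\cdot)>0$). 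Finally $\re\tfrac{L'}{L}(1+\eta+i\tau,\pi\otimes\pi',K)\ll\mathcal{L}$: bounding it in absolute value by $\sum_\a|\Lambda_{\pi\otimes\pi'}(\a)|\mathrm{N}\a^{-1-\eta}$ and using Cauchy--Schwarz gives $\le\big({-}\tfrac{L'}{L}(1+\eta,\pi\otimes\tilde\pi,K)\big)^{1/2}\big({-}\tfrac{L'}{L}(1+\eta,\pi'\otimes\tilde\pi',K)\big)^{1/2}$, and each factor is $\ll\tfrac1\eta+\log\q(\pi\otimes\tilde\pi)\ll\mathcal{L}$ because $-\tfrac{L'}{L}(\sigma,\pi\otimes\tilde\pi,K)$ has nonnegative Dirichlet coefficients with rightmost singularity a simple pole at $\sigma=1$ (Landau), hence equals $\tfrac1{\sigma-1}+O(\log\q(\pi\otimes\tilde\pi))$ near $\sigma=1$. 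Multiplying through by $4\eta$ completes the proof. The main obstacle is the third paragraph: establishing the counting estimates with \emph{explicit}, uniform conductor dependence forces one to control $\log|\Lambda(s,\pi\otimes\pi',K)|$ on both the circle and the centre of a Jensen disk despite the rapid decay from the $\Gamma_\R$-factors, and it is here that \eqref{eqn:ac-ineq} does the real work, exactly as in \cite[Chapter 5]{IK}.
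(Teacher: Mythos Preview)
Your argument is correct and is precisely the standard Hadamard-factorization/Jensen/positivity machinery that underlies the references the paper invokes; the paper itself gives no proof, simply citing \cite[Lemma 2.4]{AT} for part (1) and \cite[Theorem 5.6, Proposition 5.8]{IK} for part (2). Your sketch is effectively a reconstruction of those cited proofs in the number-field Rankin--Selberg setting, with \eqref{eqn:ac-ineq} doing exactly the work you describe of packaging the conductor dependence into $\mathcal{L}$.
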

\begin{proof}
Part 1 is Lemma 2.4 of Akbary and Trudgian \cite{AT}.  Part 2 follows from combining Theorem 5.6 of \cite{IK} and Proposition 5.8 of \cite{IK}.
\end{proof}

Let $ S  =  S (\pi \otimes \pi^\prime) = R_\pi \cup R_{\pi^\prime}$, and define the partial $L$-function
\[
L_S(s,\pi\otimes\pi^\prime,K) = \prod_{\p \not\in S} L_\p(s,\pi\otimes\pi^\prime,K).
\]
If $\a$ is coprime to every $\p \in S$, then $\lambda_{\pi\otimes \pi^\prime}(\a) = \lambda_\pi(\a) \lambda_{\pi^\prime}(\a)$.  Thus, the partial $L$-function $L_S(s,\pi \otimes \pi^\prime, K)$ can be directly related to $L(s,\pi,K)$ and $L(s,\pi^\prime,K)$.  Moreover, it is apparent that if $\sigma > 1/2$, then $N^S_{\pi\otimes\pi^\prime}(\sigma,T) = N_{\pi\otimes\pi^\prime}(\sigma,T)$, where
\[
N^S_{\pi\otimes\pi^\prime}(\sigma,T) = \#\{\rho=\beta+i\gamma\colon L_S(\pi,\pi\otimes\pi^\prime,K)=0, \beta\geq\sigma, |\gamma|\leq T\}.
\]
Thus, it suffices to prove Theorem \ref{main-theorem} and its variants for the partial $L$-function.  Finally, note that both Lemma \ref{1.9} and \ref{1.10} hold for $L_S(s,\pi\otimes\pi^\prime,K)$.  For convenience, we write $(\a, S )=1$ if the ideal $\a$ has no prime factors in $S$.

\begin{lemma}
\label{1.11}
If $\eta>0$ and $y\geq 2$, then
\begin{enumerate}
\item $\displaystyle \sum_{(\a, S )=1}\frac{|\Lambda_{\pi\otimes\pi'}(\a)|}{\mathrm{N}\a^{1+\eta}}\ll \frac{1}{\eta}+d'd\log(\q(\pi)\q(\pi')).$
\item $\displaystyle \sum_{\substack{\mathrm{N}\a\leq y \\ (\a, S )=1}}\frac{|\Lambda_{\pi\otimes\pi'}(\a)|}{\mathrm{N}\a}\ll \log y+d'd\log(\q(\pi)\q(\pi'))$.
\end{enumerate}
\end{lemma}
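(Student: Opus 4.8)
The plan is to reduce both parts to an estimate for the logarithmic derivatives of the self-dual Rankin--Selberg $L$-functions $L(s,\pi\otimes\tilde{\pi},K)$ and $L(s,\pi'\otimes\tilde{\pi}',K)$ at the point $s=1+\eta$, exploiting that these have non-negative Dirichlet coefficients. For part (1), note first that $\Lambda_{\pi\otimes\pi'}(\a)$ is supported on prime powers, so it suffices to bound $\sum_{\p\notin S}\sum_{k\geq1}|\Lambda_{\pi\otimes\pi'}(\p^k)|\mathrm{N}\p^{-k(1+\eta)}$. For $\p\notin S=R_\pi\cup R_{\pi'}$ we have
\[
\Lambda_{\pi\otimes\pi'}(\p^k)=\Big(\sum_{j=1}^{d}\alpha_\pi(j,\p)^k\Big)\Big(\sum_{j=1}^{d'}\alpha_{\pi'}(j,\p)^k\Big)\log\mathrm{N}\p,
\]
and since $\alpha_{\tilde{\pi}}(j,\p)=\overline{\alpha_\pi(j,\p)}$ at $\p\notin R_\pi$, this gives the pointwise identity
\[
|\Lambda_{\pi\otimes\pi'}(\p^k)|^2=\Lambda_{\pi\otimes\tilde{\pi}}(\p^k)\,\Lambda_{\pi'\otimes\tilde{\pi}'}(\p^k),
\]
whose right-hand factors are non-negative. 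Applying Cauchy--Schwarz (or $\sqrt{XY}\leq\tfrac12(X+Y)$ term by term) then yields
\[
\sum_{(\a,S)=1}\frac{|\Lambda_{\pi\otimes\pi'}(\a)|}{\mathrm{N}\a^{1+\eta}}\leq\frac12\sum_{(\a,S)=1}\frac{\Lambda_{\pi\otimes\tilde{\pi}}(\a)}{\mathrm{N}\a^{1+\eta}}+\frac12\sum_{(\a,S)=1}\frac{\Lambda_{\pi'\otimes\tilde{\pi}'}(\a)}{\mathrm{N}\a^{1+\eta}}.
\]
Using the standard positivity property of Rankin--Selberg $L$-functions that $\Lambda_{\pi\otimes\tilde{\pi}}(\a)\geq0$ for \emph{every} ideal $\a$ (and likewise for $\pi'$), we may drop the coprimality conditions and bound the right-hand side by $-\tfrac12\frac{L'}{L}(1+\eta,\pi\otimes\tilde{\pi},K)-\tfrac12\frac{L'}{L}(1+\eta,\pi'\otimes\tilde{\pi}',K)$.

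It therefore suffices to prove $-\frac{L'}{L}(1+\eta,\pi\otimes\tilde{\pi},K)\ll\frac1\eta+\log\q(\pi\otimes\tilde{\pi})$ and likewise with $\pi'$; part (1) then follows because $r(\pi\otimes\tilde{\pi})=1$ (so the two halves combine to give exactly $\frac1\eta$), and because the conductor inequality \eqref{eqn:ac-ineq} together with the elementary bound $\q(\pi)\geq 3^{d[K:\Q]}$ from \eqref{eqn:analytic_conductor} gives $\log\q(\pi\otimes\tilde{\pi})\leq 2d\log\q(\pi)+d^2[K:\Q]\log3\ll d\log\q(\pi)\leq d'd\log(\q(\pi)\q(\pi'))$. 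For the logarithmic-derivative estimate I would use the Hadamard factorization of the completed entire function $\Lambda(s,\pi\otimes\tilde{\pi},K)$, as in \cite[\S5]{IK}: evaluating the resulting partial-fraction expansion at the real point $s=1+\eta$ and taking real parts, the simple pole at $s=1$ contributes exactly $\frac1\eta$; the sum $-\sum_\rho\re\big(\frac1{1+\eta-\rho}+\frac1\rho\big)$ over nontrivial zeros $\rho$ is $\leq0$ because each satisfies $0\leq\re\rho\leq1<1+\eta$; the quantities $-\re B(\pi\otimes\tilde{\pi})$ and $\frac12\log q(\pi\otimes\tilde{\pi})$ are both $\ll\log\q(\pi\otimes\tilde{\pi})$ by the usual estimates; and the archimedean term $\re\frac{L_\infty'}{L_\infty}(1+\eta,\pi\otimes\tilde{\pi},K)=\sum_{j}\re\frac{\Gamma_\R'}{\Gamma_\R}(1+\eta+\kappa_{\pi\otimes\tilde{\pi}}(j))$ is $\ll\log\q(\pi\otimes\tilde{\pi})$ as well.

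Part (2) then follows from part (1) by Rankin's trick: applying part (1) with $\eta=1/\log y$ and using $\mathrm{N}\a^{1/\log y}\leq e$ for $\mathrm{N}\a\leq y$,
\[
\sum_{\substack{\mathrm{N}\a\leq y\\(\a,S)=1}}\frac{|\Lambda_{\pi\otimes\pi'}(\a)|}{\mathrm{N}\a}\leq e\sum_{(\a,S)=1}\frac{|\Lambda_{\pi\otimes\pi'}(\a)|}{\mathrm{N}\a^{1+1/\log y}}\ll\log y+d'd\log(\q(\pi)\q(\pi')).
\]

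The main obstacle is the bound on $\re\frac{L_\infty'}{L_\infty}(1+\eta,\pi\otimes\tilde{\pi},K)$. By \eqref{eqn:selberg} we have $\re\kappa_{\pi\otimes\tilde{\pi}}(j)>-1$, so each shifted argument $w=1+\eta+\kappa_{\pi\otimes\tilde{\pi}}(j)$ has $\re w>0$; the crude bound $|\frac{\Gamma_\R'}{\Gamma_\R}(w)|\ll(\re w)^{-1}$ degrades as $\eta\to0^+$, but the \emph{real part} $\re\frac{\Gamma_\R'}{\Gamma_\R}(w)$ remains $\ll\log(|w|+3)$ for all $w$ with $\re w>0$, because the potential singularity of the digamma function as $\re w\to0^+$ occurs with a favorable (non-positive) sign. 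Summing over the $d^2[K:\Q]$ archimedean factors and recalling the definition \eqref{eqn:analytic_conductor} of the analytic conductor yields the stated $\ll\log\q(\pi\otimes\tilde{\pi})$. The remaining inputs --- the positivity of $\Lambda_{\pi\otimes\tilde{\pi}}(\a)$ even at ramified primes, and the estimate $-\re B(\pi\otimes\tilde{\pi})\ll\log\q(\pi\otimes\tilde{\pi})$ --- are standard features of the Rankin--Selberg method, but some care is needed to keep the dependence on $d$, $d'$, and $K$ in the clean form asserted.
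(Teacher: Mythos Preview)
Your proposal is correct and follows essentially the same route as the paper's proof: reduce to the self-dual logarithmic derivatives via the pointwise identity $|\Lambda_{\pi\otimes\pi'}(\p^k)|^2=\Lambda_{\pi\otimes\tilde\pi}(\p^k)\Lambda_{\pi'\otimes\tilde\pi'}(\p^k)$ at unramified $\p$, bound $-\tfrac{L'}{L}(1+\eta,\pi\otimes\tilde\pi,K)$ by $\tfrac1\eta+O(\log\q(\pi\otimes\tilde\pi))$ using the Hadamard expansion and discarding the non-positive contributions from zeros and near-archimedean singularities, and then apply Rankin's trick with $\eta=1/\log y$ for part~(2). The only cosmetic differences are that the paper uses Cauchy--Schwarz (geometric mean) where you use AM--GM (arithmetic mean), and that the paper quotes the symmetrized partial-fraction formula from \cite[Theorem~5.6 and Proposition~5.7]{IK} directly, so the constant $B(\pi\otimes\tilde\pi)$ never appears explicitly.
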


\begin{proof}
By the Cauchy-Schwarz inequality, we have
\begin{align*}
\sum_{(\a, S )=1}\frac{|\Lambda_{\pi\otimes\pi'}(\a)|}{(\mathrm{N}\a)^{1+\eta}}&\ll\Big(-\frac{L'}{L}(1+\eta,\pi\otimes\tilde{\pi},K)\Big)^{1/2}\Big(-\frac{L'}{L}(1+\eta,\pi'\otimes\tilde{\pi}',K)\Big)^{1/2}.
\end{align*}

We first estimate $-\frac{L'}{L}(1+\eta,\pi\otimes\tilde{\pi},K)$, which is a positive quantity because $\eta>0$ and the Dirichlet coefficients of $-\frac{L'}{L}(s,\pi\otimes\tilde{\pi},K)$ are real and nonnegative.  By Theorem 5.6 of \cite{IK} and part 3 of Proposition 5.7 of \cite{IK}, we have
\begin{align*}
-\re\Big(\frac{L'}{L}(1+\eta,\pi\otimes\tilde{\pi},K)\Big)=&\frac{1}{2}\log q(\pi\otimes\tilde{\pi})+\re\Big(\frac{L_{\infty}'}{L_{\infty}}(1+\eta,\pi\otimes\tilde{\pi},K)\Big)\\
&+\frac{1}{1+\eta}+\frac{1}{\eta}-\sum_{\rho\neq0,1}\re\Big(\frac{1}{1+\eta-\rho}\Big),
\end{align*}
where $\rho=\beta+i\gamma$ runs through the zeros of $\Lambda(s,\pi\otimes\pi',K)$.  Since $0\leq \beta<1$, we have
\[
\re\Big(\frac{1}{1+\eta-\rho}\Big)=\frac{1+\eta+\beta}{(1+\eta+\beta)^2+\gamma^2} > 0,
\]
the contribution from sum over zeros is negative, so we can discard it.  Thus
\[
-\re\Big(\frac{L'}{L}(1+\eta,\pi\otimes\tilde{\pi},K)\Big)\leq\frac{1}{2}\log q(\pi\otimes\tilde{\pi})+\re\Big(\frac{L_{\infty}'}{L_{\infty}}(1+\eta,\pi\otimes\tilde{\pi},K)\Big)+\frac{1}{1+\eta}+\frac{1}{\eta}.
\]
By the proof of part 2 in Proposition 5.7 in \cite{IK}, we have
\[
\re\Big(\frac{L_{\infty}'}{L_{\infty}}(1+\eta,\pi\otimes\tilde{\pi},K)\Big)=-\sum_{|s+\kappa_{\pi\otimes\tilde{\pi}}(j)|<1}\re\Big(\frac{1}{1+\eta+\kappa_{\pi\otimes\tilde{\pi}}(j)}\Big)+O(\log\q(\pi\otimes\tilde{\pi})).
\]
Since \eqref{eqn:selberg} holds for all $1\leq j\leq d'd [K:\Q]$, it follows that
\[
\re\Big(\frac{1}{1+\eta+\kappa_{\pi\otimes\tilde{\pi}}(j)}\Big) \geq \frac{\eta}{\eta^2+\im(\kappa_{\pi\otimes\tilde{\pi}}(j))^2} > 0.
\]
Therefore, by positivity and \eqref{eqn:ac-ineq},
\[
-\frac{L'}{L}(1+\eta,\pi\otimes\tilde{\pi},K)\ll\frac{1}{\eta}+\log(\q(\pi\otimes\tilde{\pi}))\ll\frac{1}{\eta}+d\log\q(\pi).
\]
Since the analogue must hold for $\pi'$, part 1 follows.  Part 2 follows by choosing $\eta=\frac{1}{\log y}.$
\end{proof}

We conclude this section with a bound on the mean value of a Dirichlet polynomial.

\begin{lemma}
\label{main-inequality}
Let $T\geq 1$ and $u>y\gg (\mathcal{Q}T^{[K:\Q]})^{\Cl[abcon]{yexp1}}$, where $\Cr{yexp1}$ is sufficiently large.  Define
\[
S_{y,u}(\tau,\pi\otimes\pi'):=\sum_{y<\mathrm{N}\p\leq u}\frac{\Lambda_{\pi\otimes\pi^\prime}(\p)}{\mathrm{N}\p^{1+i\tau}}.
\]
1. If $L(s,\pi',K)$ satisfies GRC, then
\[
\int_{-T}^T |S_{y,u}(\tau,\pi\otimes\pi')|^2 d\tau\ll \frac{(d^\prime)^2(\log u)(\log u+d^2\log\mathfrak{q}(\pi))}{\log y}.
\]
2. If $K=\Q$ and $L(s,\pi,\Q)$ satisfies GRC, then
\[
\sum_{q\leq T^2}\log\frac{T^2}{q}~\sideset{}{^\star}\sum_{\chi\bmod q}\int_{-T}^{T}\Big|S_{y,u}(\tau,(\pi\otimes\chi)\otimes\pi')\Big|^2 d\tau\ll (d^\prime)^2(\log u)(\log u+d^2\log\mathfrak{q}(\pi)).
\]
\end{lemma}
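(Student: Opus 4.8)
The plan is to reduce both parts to a large sieve (or mean-value) inequality for Dirichlet polynomials, following the classical template of Montgomery and Gallagher. For Part 1, observe that $S_{y,u}(\tau,\pi\otimes\pi')$ is a Dirichlet polynomial supported on primes $\p$ with $y<\mathrm{N}\p\le u$, and since $\p$ is prime and GRC holds for $\pi'$, the coefficient $\Lambda_{\pi\otimes\pi'}(\p)=\lambda_{\pi}(\p)\lambda_{\pi'}(\p)\log\mathrm{N}\p$ satisfies $|\Lambda_{\pi\otimes\pi'}(\p)|\le d' |\lambda_\pi(\p)|\log\mathrm{N}\p$. First I would apply the mean-value theorem for Dirichlet polynomials over $K$ (a Montgomery-type estimate, available e.g. via the large sieve for $\zeta_K$ or a direct second-moment computation), which bounds
\[
\int_{-T}^T \Big|\sum_{y<\mathrm{N}\p\le u}\frac{a_\p}{\mathrm{N}\p^{1+i\tau}}\Big|^2 d\tau \ll \Big(T+ \#\{\p:\mathrm{N}\p\le u\}\Big)\sum_{y<\mathrm{N}\p\le u}\frac{|a_\p|^2}{\mathrm{N}\p^2}.
\]
Since we assume $y\gg(\mathcal{Q}T^{[K:\Q]})^{\Cr{yexp1}}$ with $\Cr{yexp1}$ large, the term $T$ is dominated by the count of primes $\p$ with $\mathrm{N}\p\le u$ whenever $u$ is not much larger than $y$; but in general one must be slightly careful, and a cleaner route is to use the hybrid large sieve which gives the bound $\ll \sum_{y<\mathrm{N}\p\le u}\frac{|a_\p|^2}{\mathrm{N}\p^2}(\log u)$ directly (the $\log u$ absorbing the $T$-dependence via $y$ being large). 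Then I would bound $\sum_{y<\mathrm{N}\p\le u}\frac{|\lambda_\pi(\p)|^2(\log\mathrm{N}\p)^2}{\mathrm{N}\p^2}$ by $\frac{\log u}{y}\sum_{y<\mathrm{N}\p\le u}\frac{|\lambda_\pi(\p)|^2\log\mathrm{N}\p}{\mathrm{N}\p}\cdot\frac{1}{\log y}\cdot(\text{something})$; more precisely, pulling out $\mathrm{N}\p^{-1}\le y^{-1}$ and one factor $\log\mathrm{N}\p\le\log u$, this is $\ll\frac{\log u}{y\log y}\sum_{y<\mathrm{N}\p\le u}\frac{|\Lambda_{\pi\otimes\tilde\pi}(\p)|}{\mathrm{N}\p}$, and Lemma \ref{1.11}(2) (applied to $\pi\otimes\tilde\pi$, using $|\lambda_\pi(\p)|^2\log\mathrm{N}\p=\Lambda_{\pi\otimes\tilde\pi}(\p)$ at unramified primes) bounds the remaining sum by $\ll\log u+d^2\log\mathfrak{q}(\pi)$. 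Collecting the factors $(d')^2$, $\log u$, $(\log u+d^2\log\mathfrak{q}(\pi))$, and $1/\log y$ — noting the spurious $1/y$ is harmless — gives the claimed bound, provided the arithmetic of the $\log$ powers works out; here is where I would double-check that no extra $\log y$ or $\log u$ sneaks in.

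For Part 2, the structure is the same but with the additional average over primitive Dirichlet characters $\chi\bmod q$ for $q\le T^2$, weighted by $\log(T^2/q)$. Now the coefficient of $S_{y,u}(\tau,(\pi\otimes\chi)\otimes\pi')$ at a prime $p\notin S$ is $\lambda_\pi(p)\lambda_{\pi'}(p)\chi(p)\log p$, so the relevant object is
\[
\sum_{q\le T^2}\log\tfrac{T^2}{q}\ \sideset{}{^\star}\sum_{\chi\bmod q}\int_{-T}^{T}\Big|\sum_{y<p\le u}\frac{b_p\,\chi(p)}{p^{1+i\tau}}\Big|^2 d\tau,\qquad b_p=\lambda_\pi(p)\lambda_{\pi'}(p)\log p.
\]
The key input is Gallagher's hybrid large sieve inequality (the combination of the additive large sieve with the multiplicative one that underlies \eqref{eqn:Gallagher}), which bounds this triple sum by $\ll \sum_{y<p\le u}|b_p|^2 p^{-2}$ with no loss when $y\ge(T^2)^{\text{const}}$; precisely, the weight $\log(T^2/q)$ together with $\sum_\chi^\star$ and the $\tau$-integral of length $2T$ contribute a factor comparable to the number of "harmonics" $\ll T^2\cdot T = T^3$, which is $\le y$ for $y$ a large enough power of $T$, hence dominated by the diagonal. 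Then the same elementary manipulation as in Part 1 — extract $p^{-1}\le y^{-1}$ and $\log p\le\log u$, then invoke Lemma \ref{1.11}(2) for $\pi\otimes\tilde\pi$ over $\Q$ — yields $\ll (d')^2(\log u)(\log u+d^2\log\mathfrak{q}(\pi))$ (the $1/y$ from the extraction cancelling the size of the harmonic count, which is why no $1/\log y$ appears this time). Note the asymmetry: in Part 2 GRC is assumed for $\pi$ (not $\pi'$), because the twisting is on $\pi$; the bound $|b_p|\le d|\lambda_{\pi'}(p)|\log p$ then plays the role that $|b_p|\le d'|\lambda_\pi(p)|\log p$ played before, and one applies Lemma \ref{1.11}(2) to $\pi'\otimes\tilde\pi'$ with the roles of $d,d'$ swapped — but since the final bound is symmetric enough (it is stated with $d^2\log\mathfrak{q}(\pi)$, matching the $\pi$-twist setup), one must take care to match conventions. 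I would reconcile this by noting that throughout the paper GRC for $\pi'$ (Part 1) versus $\pi$ (Part 2) is exactly what makes the trivial bound on the twisted coefficients fall on the correct factor.

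The main obstacle I anticipate is getting the large sieve inputs in the correct form over a general number field $K$ for Part 1, and with the precise $\log(T^2/q)$-weighting for Part 2. For Part 1, the cleanest available tool is a mean-value theorem for Dirichlet polynomials indexed by ideals of $\O_K$; such an inequality can be extracted from the large sieve for $\zeta_K(s)$ or proved directly by expanding the square and bounding the off-diagonal via a smooth count of ideals in a box, but one must track the dependence on $D_K$ (which is folded into $\mathcal{Q}$, hence harmless once $y$ is a sufficiently large power of $\mathcal{Q}T^{[K:\Q]}$). For Part 2 the relevant statement is Gallagher's weighted hybrid large sieve, whose proof (via the Montgomery–Vaughan large sieve together with a Fejér-kernel majorant encoding the $\log(T^2/q)$ weight) is classical for $K=\Q$; the only work is verifying that the hypothesis $u>y\gg(\mathcal{Q}T)^{\Cr{yexp1}}$ makes the harmonic count $\ll T^3$ (or even just $\ll T^2\log T$ after the weighting) negligible against $y$. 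In both cases, once the diagonal dominates, the remainder is the routine coefficient estimate via Lemma \ref{1.11}(2) described above, and the bookkeeping of $\log$ factors and the $(d')^2$ is straightforward.
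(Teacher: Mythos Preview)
Your overall strategy---reduce to a mean-value/large-sieve inequality, then bound the coefficient sum via Lemma \ref{1.11}---is the paper's strategy, and your treatment of Part 2 is essentially correct (Gallagher's weighted hybrid large sieve is exactly the input, and the point is that the factor $Q^2T+p$ in that inequality becomes $\asymp p$ once $p>y\gg(\mathcal{Q}T)^{\Cr{yexp1}}$ with $\Cr{yexp1}$ large enough, leaving $\sum_{y<p\le u}|\Lambda_{\pi\otimes\pi'}(p)|^2/p$; there is no ``$1/y$ cancellation'' step).

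However, Part 1 has a genuine gap. A generic Montgomery mean-value theorem of the shape $(T+u)\sum|a_\p|^2$ is too weak: the prefactor $u$ can be as large as $y^{\Cr{yexp3}}$, and no amount of extracting $\mathrm{N}\p^{-1}\le y^{-1}$ from the coefficient sum will beat that. Your own write-up reflects this---you produce a stray $1/y$, call it ``spurious,'' and insert a $1/\log y$ by hand without justification. The missing idea is that the Dirichlet polynomial is supported on \emph{prime} ideals, whose norms are well-spaced; this is precisely what yields the $1/\log y$ saving. The paper invokes Weiss's inequality \cite[Corollary 3.8]{weiss}, which for coefficients $b(\p)$ vanishing when $\mathrm{N}\p\le y$ gives
\[
\int_{-T}^{T}\Big|\sum_{\p}b(\p)\mathrm{N}\p^{-i\tau}\Big|^2 d\tau\ll \frac{1}{\log y}\sum_{\p}\mathrm{N}\p\,|b(\p)|^2,
\]
valid once $y$ is a sufficiently large power of $\mathcal{Q}T^{[K:\Q]}$. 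With $b(\p)=\Lambda_{\pi\otimes\pi'}(\p)/\mathrm{N}\p$ this gives $\frac{1}{\log y}\sum_{y<\mathrm{N}\p\le u}|\Lambda_{\pi\otimes\pi'}(\p)|^2/\mathrm{N}\p$ directly, and then GRC for $\pi'$ together with $|\lambda_\pi(\p)|^2\log\mathrm{N}\p=|\Lambda_{\pi\otimes\tilde\pi}(\p)|$ and partial summation against Lemma \ref{1.11}(2) finishes. The $1/\log y$ is the whole point of the ``log-free'' estimate, so you cannot replace Weiss's inequality (or an equivalent well-spacing argument over $K$) with a generic mean value.
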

\begin{proof}
1. Let $b(\p)$ be a complex-valued function supported on the prime ideals of $K$ such that $\sum_{\p}\mathrm{N}\p|b(\p)|^2<\infty$ and $b(\p)=0$ whenever $\mathrm{N}\p\leq y$.  With our choice of $T$ and $y$, it follows from \cite[Corollary 3.8]{weiss} that
\begin{equation}
\label{eqn:large_sieve_ineq}
\int_{-T}^{T}\Big|\sum_{\p}b(\p)\mathrm{N}\p^{-i\tau}\Big|^2 d\tau\ll \frac{1}{\log y}\sum_{\p}\mathrm{N}\p|b(\p)|^2,
\end{equation}
If we define $b(\p)$ by
\[
b(\p) =\begin{cases}
\displaystyle\frac{\Lambda_{\pi\otimes \pi^\prime}(\mathfrak{p})}{\mathrm{N}\mathfrak{p}}&\mbox{if $y<\mathrm{N}\p\leq u$,}\\
0&\mbox{otherwise}
\end{cases}
\]
and recall the definition of $S_{y,u}(\tau,\pi\otimes\pi')$, then an application of \eqref{eqn:large_sieve_ineq} yields the bound
\[
\int_{-T}^{T}\Big|S_{y,u}(\tau,\pi\otimes\pi')\Big|^2 d\tau\ll \frac{1}{\log y}\sum_{y<\mathrm{N}\p\leq u}\frac{|\Lambda_{\pi\otimes\pi'}(\p)|^2}{\mathrm{N}\p}.
\]
Since $y$ is greater than the norm of any ramified prime, it follows from our assumption of GRC for $L(s,\pi^\prime,K)$ that
\[
\sum_{y<\mathrm{N}\p\leq u}\frac{|\Lambda_{\pi\otimes\pi'}(\p)|^2}{\mathrm{N}\p}=\sum_{y<\mathrm{N}\p\leq u}\frac{|\lambda_{\pi}(\p)|^2|\lambda_{\pi'}(\p)|^2(\log\mathrm{N}\p)^2}{\mathrm{N}\p}\leq (d^\prime)^2\sum_{y<\mathrm{N}\p\leq u}\frac{|\lambda_{\pi}(\p)|^2(\log\mathrm{N}\p)^2}{\mathrm{N}\p}.
\]
Since all prime ideals $\p$ in the sum are unramified, we have that $|\lambda_{\pi}(\p)|^2\log\mathrm{N}\p=|\Lambda_{\pi\otimes\tilde{\pi}}(\p)|$.  The claimed result now follows by partial summation using Lemma \ref{1.11}.

2. Let $K=\Q$.  Suppose that $a(p)$ is a function on primes such that $a(p)=0$ if $p\leq Q$ and $\sum_{p}p|a(p)|^2<\infty$.  By \cite[Theorem 4]{Gallagher2}, we have that for $T\geq 1$, 
\begin{equation}
\label{eqn:sum_unram}
\sum_{q\leq Q}\log\frac{Q}{q}~\sideset{}{^\star}\sum_{\chi\bmod q}\int_{-T}^{T}\Big|\sum_{p}a(p)\chi(p)p^{-it}\Big|^2 dt\ll \sum_{p}(Q^2 T+p)|a(p)|^2.
\end{equation}
Define $a(p)$ as we did $b(\p)$ above and let $Q=T^2$.  Note that our choice of $y$ implies that $a(p)=0$ at every ramified prime $p$ dividing the conductor of $\pi\otimes\pi'$.  Choosing $\Cr{yexp1}>6$, our hypotheses imply that $T^5\ll p$ for every $p\in(u,y]$.  Thus
\[
\sum_{q\leq T^2}\log\frac{T^2}{q}~\sideset{}{^\star}\sum_{\chi\bmod q}\int_{-T}^{T}\Big|S_{y,u}(\tau,(\pi\otimes\chi)\otimes\pi')\Big|^2 dt\ll \sum_{y<p\leq u}(T^5+p)\frac{|\Lambda_{\pi\otimes\pi'}(p)|^2}{p^2}\leq \sum_{y<p\leq u}\frac{|\Lambda_{\pi\otimes\pi'}(p)|^2}{p}.
\]
This is bounded using GRC just as in the proof of Part 1.
\end{proof}

\section{The zero density estimate}
\label{sec:proof-main}

In this section, we prove Theorem \ref{main-theorem} by generalizing Gallagher's \cite{Gallagher2} and Weiss's \cite{weiss} treatment of Tur\'an's method for detecting zeros of $L$-functions, obtaining a result that is uniform in  $K$, $\pi$, and $\pi'$.  The key result is the following technical proposition, whose proof we defer to the end of the section.  Recall from Lemma \ref{1.9} that $\mathcal{L}=\mathcal{D}\log(\mathcal{Q}T^{[K:\Q]})$, and any real zero $\beta_{\mathrm{exc}}$ of $L(s,\pi\otimes\pi',K)$ to the right of the region $\sigma\leq1-\Cr{zero-free}\mathcal{L}^{-1}$ is called a Siegel zero.

\begin{proposition}
\label{4.2}
Recall the notation and hypotheses of Theorem \ref{main-theorem}.  Let $y=e^{\Cr{yexp1}\mathcal{L}}$ with $\Cr{yexp1}$ sufficiently large.  Suppose that $\eta$ satisfies $\mathcal{L}^{-1} \ll \eta \leq 1/55$.  Let
\[
S_{y,u}(\tau,\pi\otimes\pi'):=\sum_{y<\mathrm{N}\p\leq u}\frac{\Lambda_{\pi\otimes\pi^\prime}(\p)}{\mathrm{N}\p^{1+i\tau}}.
\]
If $L_S(s,\pi\otimes\pi^\prime)$ has a zero $\rho_0$ satisfying $|\rho_0-(1+i\tau)|\leq \eta$ and $\rho_0$ is not a Siegel zero, then for sufficiently large $\Cl[abcon]{yexp2}$ and $\Cl[abcon]{yexp3}$, we have that
\[
\frac{y^{\Cr{yexp2} \eta}}{(\log y)^3}\int_y^{y^{\Cr{yexp3}}}|S_{y,u}(\tau,\pi\otimes\pi')|^2\frac{du}{u}\gg1.
\]
\end{proposition}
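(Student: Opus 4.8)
The plan is to run Turán's zero-detection argument in the form used by Gallagher \cite{Gallagher2} and Weiss \cite{weiss}. The starting point is that a zero $\rho_0=\beta_0+i\gamma_0$ with $|\rho_0-(1+i\tau)|\le\eta$ forces
\[
-\frac{L_S'}{L_S}(1+i\tau+w,\pi\otimes\pi',K)=\sum_{(\a,S)=1}\frac{\Lambda_{\pi\otimes\pi'}(\a)\,\mathrm{N}\a^{-i\tau}}{\mathrm{N}\a^{1+w}}
\]
to have a pole at $w=s_0:=\rho_0-1-i\tau$, where $|s_0|\le\eta$ and, because $\rho_0$ is not a Siegel zero, $1-\beta_0\gg\mathcal{L}^{-1}$ by Lemma \ref{1.9}; for our choice $y=e^{\Cr{yexp1}\mathcal{L}}$ this means $|s_0|\log y\gg1$. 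The inequality $|s_0|\log y\gg1$ is exactly what makes the pole ``visible'' to a Dirichlet polynomial whose length is a bounded power of $y$: a Siegel zero, by contrast, would have $|s_0|\log y$ small, and its contribution would be swamped, which is why it must be excluded.

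First I would apply a (smoothed) Perron formula to the difference of the partial sums of $\Lambda_{\pi\otimes\pi'}(\a)\mathrm{N}\a^{-1-i\tau}$ over ideals at $u$ and at $y$, obtaining
\[
S_{y,u}(\tau,\pi\otimes\pi')=\frac{1}{2\pi i}\int_{(c_0)}\Big(-\frac{L_S'}{L_S}(1+i\tau+w,\pi\otimes\pi',K)\Big)\frac{u^w-y^w}{w}\,dw-\mathcal{E}_{y,u}(\tau),
\]
where $\mathcal{E}_{y,u}$ collects the genuine prime-power contributions and the truncation error; both are controlled by GRC for $\pi'$ together with Lemma \ref{1.11}, and both are negligible against the main term below once $\Cr{yexp1}$ is large. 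Using a smooth weight so that the $w$-integral converges absolutely on vertical lines, I would then shift the contour to $\re(w)=-\eta'$ for a suitable $\eta'\asymp\eta$ that avoids all zeros and clears $w=s_0$. Since the kernel $\frac{u^w-y^w}{w}$ is regular at $w=0$, Cauchy's theorem picks up only the residues at $w=s_j:=\rho_j-1-i\tau$ over the zeros $\rho_j$ with $\re(s_j)>-\eta'$ and $|\gamma_j-\tau|$ bounded (of which there are $\ll\eta\mathcal{L}$ by part (2) of Lemma \ref{1.10}, the smooth weight suppressing the rest) and, when $\pi'\cong\tilde\pi$, a residue at $w=-i\tau$ coming from the pole of $L_S$ at $s=1$, which has modulus $\ll\min\{\log(u/y),|\tau|^{-1}\}$ and so is either itself a legitimate main term (when $|\tau|$ is bounded) or negligible — harmless in either case. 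On the shifted line, part (1) of Lemma \ref{1.10} writes $-L_S'/L_S$ as a sum of $\ll\mathcal{L}$ simple poles plus $O(\mathcal{L})$, so after excising small arcs the remaining integral is $O(\mathcal{L}\,y^{-c_0\eta'})$. The residue at $w=s_0$ is $\mathrm{ord}_{\rho_0}(L_S)\cdot s_0^{-1}(y^{s_0}-u^{s_0})$; using $(1-\beta_0)\log y\gg1$, this has modulus $\gg y^{\beta_0-1}/|s_0|$ at $u=y^{\Cr{yexp3}}$, and it is the main term.

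Next, writing $u=e^v$ with $v\in[\log y,\Cr{yexp3}\log y]$, I would bound $\int_y^{y^{\Cr{yexp3}}}|S_{y,u}(\tau)|^2\frac{du}{u}$ from below by Cauchy--Schwarz against a dual weight fashioned from $\rho_0$, i.e. matched to the frequency $\gamma_0-\tau=\im(s_0)$ of the oscillation $e^{vs_0}$; the decisive input is
\[
\int_{\log y}^{\Cr{yexp3}\log y}\Big|\frac{y^{s_0}-e^{vs_0}}{s_0}\Big|^2\,dv\gg\frac{y^{2(\beta_0-1)}\,\log y}{|s_0|^2},
\]
which holds because $(1-\beta_0)\log y$ is bounded below. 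It then remains to check that this exceeds $(\log y)^3/y^{\Cr{yexp2}\eta}$ for $\Cr{yexp2}$ large. When $|s_0|\asymp\mathcal{L}^{-1}$ one has $y^{2(\beta_0-1)}\asymp1$ while $|s_0|^{-2}\asymp(\log y)^2$, which already produces the required $(\log y)^3$; when $|s_0|$ is bounded below, the factor $y^{\Cr{yexp2}\eta}$ absorbs the loss $y^{2(\beta_0-1)}\ge y^{-2\eta}$, and, writing $w_0:=\eta\log y\gg1$, the claimed inequality reduces to $e^{(\Cr{yexp2}-2)w_0}\gg w_0^2$. Choosing $\Cr{yexp2}$ and $\Cr{yexp3}$ large then completes the argument.

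The step I expect to be the main obstacle is the Cauchy--Schwarz estimate, and inside it the contribution of the other zeros near $1+i\tau$: there can be as many as $\asymp\eta\mathcal{L}$ of them (part (2) of Lemma \ref{1.10}), and they carry oscillations $e^{vs_j}$ whose frequencies $\gamma_j-\tau$ may cluster arbitrarily close to $\gamma_0-\tau$, so a naive dual weight does not suffice. Overcoming precisely this is the point of Turán's power-sum method: exploiting the freedom to vary the length $u$ of the polynomial (equivalently, to move within the range of $v$), together with the bound of Lemma \ref{1.10} on the count of nearby zeros, one shows that at \emph{some} scale the oscillations cannot all conspire to cancel, so that a clean lower bound survives. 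Carrying this out with every implied constant absolute and uniform in $K$, $\pi$, and $\pi'$ is where the real work lies, and I would follow the treatments of Gallagher and Weiss closely here.
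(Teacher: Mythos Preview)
Your proposal sets up a contour-integral framework --- Perron for $S_{y,u}$, shift the line, collect residues at nearby zeros, then bound $\int|S_{y,u}|^2\,du/u$ below by Cauchy--Schwarz against a dual weight tuned to $\rho_0$ --- and then, at the crucial step, invokes Tur\'an's power-sum method to defeat the $\asymp\eta\mathcal{L}$ competing zeros. But you never specify how Tur\'an's lemma actually enters: there is no power sum visible in your setup, and ``exploiting the freedom to vary $u$'' is not a proof. As you yourself note, a dual-weight Cauchy--Schwarz by itself cannot separate $\rho_0$ from zeros whose ordinates cluster near $\gamma_0$. This is a real gap, and ironically the references you say you would follow (Gallagher, Weiss) do not argue this way.

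The paper proceeds quite differently: it takes high derivatives of $F(s)=L_S'/L_S(s)$ at the single point $\xi=1+\eta+i\tau$. Differentiating $k$ times produces, up to controllable errors, the genuine power sum $\sum_{|\rho-(1+i\tau)|\le 200\eta}(\xi-\rho)^{-(k+1)}$, to which Tur\'an's lemma applies directly (with $M=300\,\eta\log y$, which dominates the number of terms by Lemma~\ref{1.10}); this yields a lower bound $\frac{\eta^{k+1}}{k!}|F^{(k)}(\xi)|\ge\tfrac12(100)^{-(k+1)}$ for some $k\in[M,2M]$. The matching upper bound comes from differentiating the Dirichlet series for $F$ termwise, introducing the weight $j_k(u)=u^ke^{-u}/k!$, and splitting the resulting sum at $y$, at $y^{\Cr{yexp3}}$, and over higher prime powers (Lemma~\ref{1.11} handles all the tails); one arrives at $\frac{\eta^{k+1}}{k!}|F^{(k)}(\xi)|\le\eta^2\int_y^{y^{\Cr{yexp3}}}|S_{y,u}|\,\frac{du}{u}+\tfrac14(100)^{-(k+1)}$. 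Comparing the two bounds, using $k\asymp\eta\log y$ to rewrite $(100)^{-k}$ as $y^{-c\eta}$, and only then squaring via Cauchy--Schwarz (which here merely upgrades $\int|S|$ to $\int|S|^2$) completes the proof. The key structural difference from your outline is that Tur\'an acts on the zero sum \emph{before} any integration in $u$, not as a repair inside a Cauchy--Schwarz argument; there is no contour shift and no dual weight.
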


We first deduce Theorem \ref{main-theorem} from Proposition \ref{4.2}.  The proof of Proposition \ref{4.2} relies on certain upper and lower bounds on the derivatives of $\frac{L_S^\prime}{L_S}(s,\pi\otimes\pi^\prime)$, which are proven and assembled subsequently.

\subsection{Proof of Theorems \ref{main-theorem} and \ref{main-theorem-2}}

By Theorem 5.8 of \cite{IK}, we have
\begin{equation}
\label{eqn:zero-count}
N_{\pi\otimes\pi'}(0,T)=\frac{T}{\pi}\log\Big(q(\pi\otimes\pi') \Big(\frac{T}{2\pi e}\Big)^{d'd[K:\Q]}\Big)+O(\log\q(iT,\pi\otimes\pi')).
\end{equation}
Thus it suffices to prove Theorems \ref{main-theorem} and \ref{main-theorem-2} for $1-\sigma$ sufficiently small.  Since the left side of Theorem \ref{main-theorem} is a decreasing function of $\sigma$ and the right side of Theorem \ref{main-theorem} is essentially constant for $1-\sigma\ll\mathcal{L}^{-1}$, it suffices to prove the theorem for $1-\sigma\gg\mathcal{L}^{-1}$.  Therefore, we may assume that $\Cl[abcon]{lowerbound}\leq \sigma\leq 1-\Cr{zero-free}\mathcal{L}^{-1}$, where $\frac{1}{2}<\Cr{lowerbound}<1$.  Since $\Cr{zero-free}$ and $\Cr{lowerbound}$ are absolute, we may take $\Cr{zero-free}$ sufficiently close to 0 and $\Cr{lowerbound}$ sufficiently close to $1$ such that we may take $\eta=\sqrt{2}(1-\sigma)$ in Proposition \ref{4.2}.

Suppose that $T\geq1$ and $\rho=\beta+i\gamma$ satisfies $|\gamma|\leq T$ and $\sigma\leq \beta\leq1-\Cr{zero-free}\mathcal{L}^{-1}$.  (In particular, $\rho$ is not a Siegel zero.)  For $\tau\in\R$, let
\[
\psi_{\rho}(\tau)=\begin{cases}
	1&\mbox{if $|\gamma-\tau|\leq 1-\sigma$ and $|\tau|\leq T$,}\\
	0&\mbox{otherwise.}
\end{cases}
\]
Since $\eta=\sqrt{2}(1-\sigma)$, we have that $\sqrt{2}\int_{-T}^T \psi_{\rho}(\tau)d\tau\geq\eta$ for each $\rho$, while $\sum_{\rho}\psi_{\rho}(\tau)\ll \eta\mathcal{L}$ by Lemma \ref{1.10}.  Thus
\[
N_{\pi\otimes\pi'}(\sigma,T)\ll\int_{-T}^{T}\sum_{\rho}\eta^{-1}\psi_{\rho}(\tau)d\tau.
\]
Since $\psi_{\rho}(\tau)\neq0$ implies that $|\rho-(1+i\tau)|\leq\eta$, we have by Proposition \ref{4.2} and the bound $1\ll\eta\mathcal{L}$ that
\begin{align*}
\int_{-T}^{T}\sum_{\rho}\eta^{-1}\psi_{\rho}(\tau)d\tau&\ll\int_{-T}^T \eta^{-1}\Big(\sum_{\rho}\psi_{\rho}(\tau)\Big)\frac{y^{\Cr{yexp2}\eta}}{(\log y)^{3}}\Big(\int_y^{y^{\Cr{yexp3}}}|S_{y,u}(\tau,\pi\otimes\pi')|^2\frac{du}{u}\Big)d\tau\\
&\ll \mathcal{L}\frac{y^{\Cr{yexp2} \eta}}{(\log y)^{3}}\int_y^{y^{\Cr{yexp3}}}\Big(\int_{-T}^T  |S_{y,u}(\tau,\pi\otimes\pi')|^2d\tau\Big)\frac{du}{u}.
\end{align*}

If $L(s,\pi,K)$ satisfies GRC, then it follows from Part 1 of Lemma \ref{main-inequality}, the definition of $S_{y,u}(\tau,\pi\otimes\pi')$, and the fact that $y=e^{\Cr{yexp1}\mathcal{L}}$ (with $\Cr{yexp1}$ sufficiently large) that
\[
N_{\pi\otimes\pi^\prime}(\sigma,T)\ll (d^\prime)^2\mathcal{L}\frac{y^{\Cr{yexp2} \eta}}{(\log y)^{4}}\int_y^{y^{\Cr{yexp3}}}\frac{(\log u)(\log u+d^2\log \q(\pi))}{u}du\ll d^2 y^{\Cr{yexp2} \eta}.
\]
Since $\eta=\sqrt{2}(1-\sigma)$ and $y=e^{\Cr{yexp1}\mathcal{L}}$ with $\Cr{yexp1}$ sufficiently large, we have
\[
N_{\pi\otimes\pi^\prime}(\sigma,T)\ll (d^\prime)^2y^{\Cr{yexp2} \sqrt{2}(1-\sigma)}\ll (d^\prime)^2(\mathcal{Q} T^{[K:\Q]})^{\Cr{yexp1}\Cr{yexp2}\sqrt{2}\mathcal{D}(1-\sigma)}.
\]
To conclude the proof of Theorem \ref{main-theorem}, we let $\Cr{c1}=2\sqrt{2}\Cr{yexp1}\Cr{yexp2}$.   Theorem \ref{main-theorem-2} is proved almost exactly the same way as Theorem \ref{main-theorem} except we use Part 2 of Lemma \ref{main-inequality} (see also the proof of \cite[Theorem 6]{Gallagher2}.) We omit the proof.

\subsection{Bounds on derivatives}
\label{sec:derivative_bounds}

We begin by introducing notation which we will use throughout this section and the next.  First, let $r=r(\pi\otimes\pi^\prime)$ be the order of the possible pole of $L_S(s,\pi\otimes\pi^\prime,K)$ at $s=1$.  We suppose that $L_S(s,\pi\otimes \pi^\prime,K)$ has a zero $\rho_0$ (which is not a Siegel zero) satisfying
\[
|\rho_0-(1+i\tau)|\leq \eta,
\]
and we set
\[
F(s)=\frac{L_S'}{L_S}(s,\pi\otimes\pi^\prime,K).
\]
Suppose that $|\tau|\leq T$, where $T\geq 1$, as in the statement of Proposition \ref{4.2}.  On the disk $|s-(1+i\tau)|<1/4$, by part 1 of Lemma \ref{1.10}, we have
\begin{equation*}
F(s)+\frac{r }{s}+\frac{r }{s-1}=\sum_{|\rho-(1+i\tau)|\leq1/2} \frac{1}{s-\rho}+G(s),
\end{equation*}
where $G(s)$ is analytic and $|G(s)|\ll \mathcal{L}$.  Setting $\xi=1+\eta+i\tau$, we have
\begin{equation}
\label{eqn:Fprime}
\frac{(-1)^ k  }{ k  !}\frac{d^ k   F}{ds^ k  }(\xi)+r (\xi-1)^{-( k  +1)}=\sum_{|\rho-(1+i\tau)|\leq1/2}(\xi-\rho)^{-( k  +1)}+O(8^{ k } \mathcal{L}),
\end{equation}
where the error term absorbs the contribution from integrating $G(s)$ over a circle of radius 1/8 centered at $\xi$ and the term coming from differentiating $\frac{r }{s}$.  We begin by obtaining a lower bound on the derivatives of $F(s)$.

\begin{lemma}
\label{lem:lower_bound_F}
Assume the notation above.  For any $M\gg \eta\mathcal{L}$, there is some $ k  \in [M,2M]$ such that
\[
\frac{\eta^{ k +1}}{ k !}\Big|\frac{d^ k  F}{ds^ k }(\xi)\Big|\geq \frac{1}{2}(100)^{-( k +1)},
\]
where $\xi = 1+\eta + i\tau$.
\end{lemma}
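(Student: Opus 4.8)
The idea is a standard Tur\'an-power-sum / Hadamard-three-circles argument applied to the identity \eqref{eqn:Fprime}. The left side of \eqref{eqn:Fprime}, call it
\[
A_k := \frac{(-1)^k}{k!}\frac{d^k F}{ds^k}(\xi) + r(\xi-1)^{-(k+1)},
\]
equals $\sum_{|\rho-(1+i\tau)|\le 1/2}(\xi-\rho)^{-(k+1)} + O(8^k\mathcal{L})$. Isolate the contribution of the distinguished zero $\rho_0$: since $|\rho_0-(1+i\tau)|\le\eta$ and $\xi = 1+\eta+i\tau$, we have $|\xi-\rho_0|\le 2\eta$, so $|\xi-\rho_0|^{-(k+1)}\ge (2\eta)^{-(k+1)}$, which is an exponentially large quantity as $k$ grows. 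The plan is to show this term cannot be cancelled by the other zeros for all $k$ in a dyadic window $[M,2M]$, and that it dominates the $O(8^k\mathcal L)$ error once $\eta$ is small. First I would bound the number of zeros $\rho$ with $|\rho-(1+i\tau)|\le 1/2$ by $O(\mathcal L)$ using part (2) of Lemma \ref{1.10} (dyadically summing the annulus counts, or directly with $\eta = 1/2$), and split them into the ``close'' zeros with $|\xi-\rho|\le 10\eta$ (say) and ``far'' zeros with $|\xi-\rho|> 10\eta$.

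For the far zeros, $\sum_{\text{far}}|\xi-\rho|^{-(k+1)} \le O(\mathcal L)\,(10\eta)^{-(k+1)}$; but actually one wants this to be small compared to $(2\eta)^{-(k+1)}$, and $(10\eta/2\eta)^{k+1} = 5^{k+1}$ already beats the $O(\mathcal L)$ factor once $k \gg \log\mathcal L$, i.e. once $M\gg\mathcal L$; combined with the hypothesis $M\gg\eta\mathcal L$ this is automatic when $\eta$ is bounded (which it is, $\eta\le 1/55$). Hmm — more carefully, one should really compare against the full sum, so the cleaner route is the Tur\'an trick: consider the power sum $\sum_{j} w_j^{k+1}$ where $w_j = (\xi-\rho_j)^{-1}$ ranges over the reciprocals for the close zeros (there are $O(\eta\mathcal L)$ of them, by Lemma \ref{1.10}(2) applied with radius $\sim\eta$), each satisfying $|w_j|\gg \eta^{-1}$ and in particular $|w_{j_0}|\ge (2\eta)^{-1}$ for the index corresponding to $\rho_0$. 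By the second main theorem of Tur\'an (or just: a power sum of $n$ terms whose largest has modulus $\ge\mu$ must, for some exponent $k$ in any window of length $\gg n$, have modulus $\ge (C\mu)^{k}$ for an absolute $C$), there is $k\in[M,2M]$ with $\bigl|\sum_j w_j^{k+1}\bigr| \gg (c\eta^{-1})^{k+1}$ for an absolute $c$; choosing the implied constants so that $c^{k+1} > (100)^{-(k+1)}\cdot 2$ with a bit of room, and absorbing both the far-zero sum (bounded by $O(\mathcal L)(10\eta)^{-(k+1)}$, which is a smaller geometric rate) and the $O(8^k\mathcal L)$ error (geometric rate $8$, dwarfed by $\eta^{-1}\ge 55$) and the pole term $r(\xi-1)^{-(k+1)} = r(\eta)^{-(k+1)}\cdot(\text{unit})$ — wait, the pole term has modulus $r\eta^{-(k+1)}$ which is \emph{comparable} to the main term, not smaller.

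The pole term is the one genuine subtlety. Note $r = r(\pi\otimes\pi')$ is $O(1)$ (it is $0$ or $1$ in the cases we need, and in general bounded), and $|\xi-1| = \eta$ exactly with $\xi-1 = \eta$ real positive, so $r(\xi-1)^{-(k+1)} = r\eta^{-(k+1)}$ is a \emph{positive real} number of size $\eta^{-(k+1)}$. Since it sits on the left-hand side of \eqref{eqn:Fprime} added to $A_k$'s derivative part, and on the right-hand side we've shown the zero-sum has modulus $\gg (c\eta^{-1})^{k+1}$ with $c$ absolute, the point is simply that the derivative term $\frac{(-1)^k}{k!}\frac{d^k F}{ds^k}(\xi)$ equals (zero sum) $-$ (pole term) $+ O(8^k\mathcal L)$, and we need its modulus to be $\gg (100)^{-(k+1)}\eta^{-(k+1)}$. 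This would fail only if the pole term nearly cancels the zero sum. To rule that out I would either (i) choose $c$ much larger than $r$ — impossible since $c$ is the absolute Tur\'an constant which is small — so instead (ii) observe that the zero $\rho_0$ itself contributes $(2\eta)^{-(k+1)}$ times a unit complex number that is \emph{not} forced to align with the real positive pole term, and more robustly (iii) run the Tur\'an argument on the combined multiset $\{w_j\}\cup\{(\xi-1)^{-1} \text{ with multiplicity } r\}$ — i.e., simply include the pole as one more "close point" with $|(\xi-1)^{-1}| = \eta^{-1}$. Then the left-hand side of \eqref{eqn:Fprime}, minus the $O(8^k\mathcal L)$, is exactly a power sum $\sum w_j^{k+1}$ over this enlarged finite set of $O(\eta\mathcal L)$ points (the pole contributes, the far zeros we move to the error after bounding them — or keep them too, there are only $O(\mathcal L)$ and the argument tolerates $O(\eta\mathcal L + \mathcal L) = O(\mathcal L)$ points for $k\gg\log\mathcal L$), whose largest modulus is $\ge(2\eta)^{-1}$ thanks to $\rho_0$. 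Applying the dyadic Tur\'an lower bound to this power sum directly gives, for some $k\in[M,2M]$,
\[
\Bigl|\frac{(-1)^k}{k!}\frac{d^k F}{ds^k}(\xi)\Bigr| \gg (2\eta)^{-(k+1)}\cdot(\text{const})^{k+1} - O(8^k\mathcal L) \ge \tfrac12(100)^{-(k+1)}\eta^{-(k+1)},
\]
after choosing the constants (the absolute Tur\'an constant controls how small $100$ must be taken, the error is negligible because $8\eta\le 8/55 < 1$) and using $M\gg\eta\mathcal L \gg \log\mathcal L$ via the remark after Lemma \ref{1.9} that $\mathcal L\gg d'd[K:\mathbb Q]$, hence $8^k\mathcal L = o((\text{const}\,\eta^{-1})^k)$. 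Rearranging gives the claimed $\frac{\eta^{k+1}}{k!}|F^{(k)}(\xi)|\ge \frac12(100)^{-(k+1)}$.

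The main obstacle, then, is bookkeeping the constants so that: the absolute Tur\'an power-sum constant, the number $O(\mathcal L)$ of points (handled by taking $M$ a large enough absolute multiple of $\max(\eta\mathcal L,\log\mathcal L) = O(\eta\mathcal L)$ — legitimate since the hypothesis only says $M\gg\eta\mathcal L$ and we get to choose the implied constant), and the geometric error rate $8 < \eta^{-1}$, all line up to beat $100^{-(k+1)}$. None of this is deep; it is the routine Tur\'an-method calibration, and I'd carry it out by fixing the Tur\'an constant first, then $\eta$'s range (already given as $\le 1/55$, chosen precisely so $8\eta < 1$ with room), then the constant in $M\gg\eta\mathcal L$ last.
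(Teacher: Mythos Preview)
Your approach matches the paper's: apply Tur\'an's power-sum inequality (the paper's Lemma~\ref{4.1}) to the zero sum in \eqref{eqn:Fprime}, after splitting off far zeros and controlling the $O(8^k\mathcal L)$ error. The paper uses cutoff $200\eta$ (not $10\eta$) for the close/far split, bounds the far contribution dyadically as $O((200\eta)^{-k}\mathcal L)$, and obtains from Tur\'an a lower bound $(100\eta)^{-(k+1)}$ on the close-zero sum. Your cutoff $10\eta$ is too small: the Tur\'an bound is only $(|z_1|/50)^{k+1}\ge(100\eta)^{-(k+1)}$, so any zero with $|\xi-\rho|\in(10\eta,100\eta)$ individually exceeds this---the far sum does \emph{not} have ``a smaller geometric rate'' as you assert. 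This is easily repaired by enlarging the cutoff.

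The genuine gap is your treatment of the pole. Your option (iii)---include $(\xi-1)^{-1}$ with multiplicity $r$ in the Tur\'an multiset---does not work, because the pole enters with the \emph{opposite} sign to the zeros: from $F=-r/(s-1)+\sum_\rho 1/(s-\rho)+\cdots$ one gets
\[
\frac{(-1)^k}{k!}F^{(k)}(\xi)=\sum_\rho(\xi-\rho)^{-(k+1)}-r(\xi-1)^{-(k+1)}+O(8^k\mathcal L),
\]
which is not of the form $\sum_j z_j^{k+1}$ over any fixed multiset. (Taking $z=-(\xi-1)^{-1}$ gives $z^{k+1}=(-1)^{k+1}(\xi-1)^{-(k+1)}$, whose sign alternates in $k$.) Also, $\xi-1=\eta+i\tau$ is not real unless $\tau=0$, so your claim that the pole term is ``a positive real number'' is generally false. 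The paper instead bounds the pole term separately, citing the proof of Theorem~4.2 in Weiss \cite{weiss} for the inequality $r\eta^{k+1}|\xi-1|^{-(k+1)}\le\frac14(100)^{-(k+1)}$, and then removes it from the Tur\'an lower bound via the triangle inequality. You will need to supply that bound (or an equivalent argument) rather than try to absorb the pole into the power sum.
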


We prove Lemma \ref{lem:lower_bound_F} by using a version of Tur\'an's \cite{Turan} power-sum estimate.

\begin{lemma}[Tur\'an]
\label{4.1}
Let $z_1,\ldots,z_m\in\mathbb{C}$.  If $M\geq m$, then there exists $ k \in \Z\cap[M,2M]$ such that $|z_1^{ k }+\cdots+z_m^{ k }|\geq(\frac{1}{50}|z_1|)^{ k }.$
\end{lemma}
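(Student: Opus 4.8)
The inequality is the classical ``second main theorem'' of Tur\'an's power-sum method, and the plan is to deduce it from the standard form of that theorem (as in \cite{Turan}) after two routine normalizations. Write $S_k := z_1^k + \cdots + z_m^k$. First I would note that it suffices to treat the case $|z_1| = \max_{1\le j\le m}|z_j|$: since $S_k$ is symmetric in $z_1,\dots,z_m$, relabelling so that the largest modulus sits first produces a $k$ with $|S_k| \ge (\max_j|z_j|/50)^k \ge (|z_1|/50)^k$. Next, since both sides of the desired inequality are homogeneous of degree $k$ in the $z_j$, I would rescale so that $|z_1| = 1 \ge |z_2| \ge \cdots \ge |z_m|$ (the case $z_1 = 0$ being trivial); the goal becomes producing an integer $k \in [M,2M]$ with $|S_k| \ge 50^{-k}$.

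With this normalization, Tur\'an's second main theorem supplies an absolute constant $c_0 > 0$ (one may take $c_0 = 1/(2e)$) such that for every integer $M \ge 0$,
\[
\max_{M < k \le M+m} |S_k| \;\ge\; \Big(\frac{c_0\, m}{M+m}\Big)^{m} |z_1|^{M+m} \;=\; \Big(\frac{c_0\, m}{M+m}\Big)^{m}.
\]
Because $M \ge m$, the window of integers $(M, M+m]$ is contained in $[M,2M]$, so some integer $k_0 \in [M,2M]$ satisfies $|S_{k_0}| \ge (c_0 m/(M+m))^m \ge (c_0 m/(2M))^m$. To finish I would check the elementary inequality $(c_0 m/(2M))^m \ge 50^{-k_0}$: since $k_0 > M \ge m$, it suffices that $50^{M} \ge (2M/(c_0 m))^{m}$, i.e., writing $t = M/m \ge 1$, that $50^{t} \ge (2/c_0)\,t$ for all $t \ge 1$. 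This holds with room to spare, since $50^{t}/t$ is increasing on $[1,\infty)$ and $50 \ge 2/c_0 = 4e$. Undoing the rescaling gives $|S_{k_0}| \ge (|z_1|/50)^{k_0}$, which is the assertion.

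The only genuine content lies in Tur\'an's theorem itself; the reductions and the numerical check above are routine, and the generous constant $50$ (versus the $4e$ actually needed) absorbs any discrepancy in the normalization of the cited power-sum bound. If one wished to reprove Tur\'an's theorem rather than invoke \cite{Turan}, the main obstacle would be the possibility of near-cancellation in $S_k$ among terms $z_j$ of nearly equal modulus but differing argument. The standard device is to test the power sums against a kernel supported on the length-$m$ frequency window $(M,M+m]$: Lagrange interpolation (Vandermonde inversion) at the node $z_1$ writes $z_1^{M} = \sum_{\nu=1}^{m} a_\nu S_{M+\nu}$ with $\sum_\nu |a_\nu|$ governed by the coefficients of $\prod_{j\ge 2}(x-z_j)$ divided by $\prod_{j\ge 2}|z_1 - z_j|$; that denominator degenerates precisely when some $z_j$ clusters near $z_1$, but this is exactly the regime in which the nearly-equal roots add nearly coherently over the short window and force $|S_k|$ to be large there, and Tur\'an's sliding-window argument makes this trade-off quantitative. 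Since we only need the form stated in Lemma \ref{4.1}, the cleanest route is to cite \cite{Turan} and carry out the normalization and computation described above.
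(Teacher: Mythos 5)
Your proposal is correct and takes essentially the same route as the paper, which offers no proof at all for Lemma \ref{4.1} but simply cites Tur\'an \cite{Turan}; your reduction to the normalized case $|z_1|=\max_j|z_j|=1$, the containment of the window $(M,M+m]$ in $[M,2M]$ when $M\geq m$, and the check $50^t\geq (2/c_0)t$ for $t\geq 1$ are all sound. The only caveat is the exact constant you quote in the second main theorem ($c_0=1/(2e)$ versus the $1/(4e)$ or $1/8$ appearing in standard references), but as you note, any published constant with $c_0\geq 1/25$ suffices, so the slack in the factor $50$ absorbs this.
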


Let $M=300\eta\log y$.  By our choices of $\eta$, $\mathcal{L}$, $y$, $M$, and $k$, we have the useful relationship
\begin{equation}
\label{kappa-ineq}
1\ll \eta\mathcal{L}\asymp \eta\log y\asymp M\asymp k.
\end{equation}

\begin{proof}[Proof of Lemma \ref{lem:lower_bound_F}]
We begin by considering the contribution to \eqref{eqn:Fprime} from those zeros $\rho$ satisfying $200\eta < |\rho-(1+i\tau)| \leq 1/2$.  In particular, by decomposing the sum dyadically and applying part 2 of Lemma \ref{1.10}, we find that
\begin{equation*}
\sum_{200\eta<|\rho-(1+i\tau)|\leq1/2}|\rho-\xi|^{-( k +1)}\ll \sum_{j=0}^{\infty}(2^j 200\eta)^{-( k +1)}2^{j+1}r\mathcal{L}\ll (200\eta)^{- k }\mathcal{L},
\end{equation*}
This shows that it suffices to consider the zeros $\rho$ for which $|\rho-(1+i\tau)|\leq200\eta$.

Since $0<\eta\leq 1/55$, we have
\begin{equation}
\label{ineq1}
\frac{1}{ k !}\frac{d^ k   F}{ds^ k  }(\xi)+r(\xi-1)^{-( k +1)}\geq\Big|\sum_{|\rho-(1+i\tau)|\leq200\eta}(\xi-\rho)^{-( k +1)}\Big|-O((200\eta)^{- k }\mathcal{L}).
\end{equation} 
By Lemma \ref{1.10} (part 2), the sum over zeros has $\ll \eta\mathcal{L}$ terms.  Since $M\asymp \eta\mathcal{L}$, Lemma \ref{4.1} tells us that for some $ k \in[M,2M]$, the sum over zeros on the right side of \eqref{ineq1} is bounded below by $(50|\xi-\rho_0|)^{-( k +1)}$, where $\rho_0$ is the nontrivial zero which is being detected.

Since $|\xi-\rho_0|\leq 2\eta$, the right side of the above inequality is bounded below by
\begin{equation*}
(100\eta)^{-( k +1)}(1-O(2^{- k }\eta\mathcal{L})).
\end{equation*}
Since $ k \geq M\gg \eta\mathcal{L}$ and $\mathcal{L}^{-1}\ll\eta\ll 1$, there is a constant $0<\theta<1$ so that
\begin{equation*}
2^{- k }\eta\mathcal{L}=O(\theta^{\eta\mathcal{L}}\eta\mathcal{L})\leq1/4.
\end{equation*}
Therefore, for some $ k \in[M,2M]$ with $M\gg \eta\mathcal{L}$, we have
\begin{equation*}
\label{[a,b]}
\frac{\eta^{ k +1}}{ k !}\Big|\frac{d^ k  F}{ds^ k }(\xi)\Big|+r \eta^{ k +1}|(\xi-1)^{-( k +1)}|\geq\frac{3}{4}(100)^{-( k +1)}.
\end{equation*}
During the proof of Theorem 4.2 in \cite{weiss}, Weiss proves that
\begin{equation*}
\label{iii}
r  \eta^{ k +1}|(\xi-1)^{-( k +1)}|\leq \frac{1}{4}(100)^{-( k +1)}.
\end{equation*}
The desired result now follows, that $\frac{\eta^{ k +1}}{ k !}|\frac{d^ k  F}{ds^ k }(\xi)|\geq \frac{1}{2}(100)^{-( k +1)}$.
\end{proof}

We now turn to obtaining an upper bound on the derivatives of $F(s)$, for which we have the following.

\begin{lemma}
\label{lem:upper_bound_F}
Assume the notation preceeding Lemma \ref{lem:lower_bound_F}.  Let $M=300\eta \log y$, and let $k$ be determined by Lemma \ref{lem:lower_bound_F}.  Then
\[
\frac{\eta^{ k +1}}{ k !}\Big|\frac{d^ k  F}{ds^ k }(\xi)\Big|\leq \eta^2\int_y^{y^{\Cr{yexp3}}}|S_{y,u}(\tau,\pi\otimes\pi')|\frac{du}{u}+\frac{1}{4}(100)^{-( k +1)},
\]
where $S_{y,u}(\tau,\pi\otimes\pi^\prime)$ is as in Proposition \ref{4.2}.
\end{lemma}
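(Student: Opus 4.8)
The plan is to start from the identity \eqref{eqn:Fprime} for the $k$-th derivative of $F(s) = \frac{L_S'}{L_S}(s,\pi\otimes\pi',K)$ at $\xi = 1+\eta+i\tau$, and bound each piece on the right-hand side. The term $O(8^k\mathcal{L})$ is already tiny compared to $(100)^{-(k+1)}$ once we recall $k \gg \eta\mathcal{L}$ and $\eta$ small, so it can be folded into the $\tfrac14(100)^{-(k+1)}$ slack (using \eqref{kappa-ineq}, $8^k \mathcal{L} = 8^k\mathcal{L}$ versus $\eta^{k+1} 100^{-(k+1)}$, and $8\eta < 1/2$, say). The pole term $r\eta^{k+1}|(\xi-1)^{-(k+1)}|$ is the one Weiss bounds by $\tfrac14(100)^{-(k+1)}$, quoted already in the proof of Lemma \ref{lem:lower_bound_F}; it gets absorbed the same way here. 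The contribution from zeros $\rho$ with $200\eta < |\rho - (1+i\tau)| \leq 1/2$ was shown in the proof of Lemma \ref{lem:lower_bound_F} to be $\ll (200\eta)^{-k}\mathcal{L}$, which is again negligible next to $(100)^{-(k+1)}$ since $2^{-k}\eta\mathcal{L} \to 0$. So the real content is: the sum over zeros $\rho$ with $|\rho - (1+i\tau)| \leq 200\eta$ — equivalently, the full left-hand side of \eqref{eqn:Fprime} up to these harmless errors — must be controlled by the Dirichlet polynomial integral.

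The mechanism for the main term is a Turán-type / Mellin device: differentiating the Dirichlet series $-F(s) = \sum_{(\a,S)=1}\Lambda_{\pi\otimes\pi'}(\a)\mathrm{N}\a^{-s}$ term by term gives $\frac{(-1)^k}{k!}F^{(k)}(\xi) = \frac{1}{k!}\sum_{(\a,S)=1}\Lambda_{\pi\otimes\pi'}(\a)(\log\mathrm{N}\a)^k \mathrm{N}\a^{-\xi}$. One then wants to truncate the $\a$-sum to the range $y < \mathrm{N}\a \leq y^{\Cr{yexp3}}$: the tail $\mathrm{N}\a \leq y$ and the tail $\mathrm{N}\a > y^{\Cr{yexp3}}$ each contribute negligibly, because $\frac{(\log t)^k}{k!} t^{-\eta}$ is a sharply peaked function of $t$ concentrated near $\log t \approx k/\eta \approx 300\log y$, so with $\Cr{yexp3}$ chosen large enough (say $> 600$, and $\Cr{yexp2}$-type margins) the mass outside $[y, y^{\Cr{yexp3}}]$ is exponentially small — here one uses $k \in [M, 2M]$ with $M = 300\eta\log y$ together with Stirling. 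In the surviving range, $\Lambda_{\pi\otimes\pi'}(\a)$ is supported (up to a negligible prime-power contribution, controlled by GRC for $\pi'$ exactly as in Lemma \ref{1.11}) on prime ideals $\p$ with $y < \mathrm{N}\p \leq y^{\Cr{yexp3}}$, so the sum becomes $\sum_{y<\mathrm{N}\p\leq y^{\Cr{yexp3}}} \frac{\Lambda_{\pi\otimes\pi'}(\p)(\log\mathrm{N}\p)^k}{k!}\mathrm{N}\p^{-1-\eta-i\tau}$. Writing $(\log\mathrm{N}\p)^k \mathrm{N}\p^{-\eta} = (\eta^{-1})^k \cdot (\eta\log\mathrm{N}\p)^k e^{-\eta\log\mathrm{N}\p}$ and recognizing $\frac{(\eta\log u)^k}{k!}e^{-\eta\log u}$ as (a multiple of) a Gamma-density in the variable $\log u$, one integrates out via $\int_y^{y^{\Cr{yexp3}}} \frac{(\eta\log u)^k}{k!}e^{-\eta\log u}\,\frac{du}{u}$, which is bounded above by $\eta^{-1}$ (it is at most $\int_0^\infty \frac{t^k}{k!}e^{-t}\,dt / \eta = \eta^{-1}$). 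Pulling this through yields
\[
\frac{\eta^{k+1}}{k!}\Big|\frac{d^kF}{ds^k}(\xi)\Big| \ll \eta^2 \int_y^{y^{\Cr{yexp3}}} |S_{y,u}(\tau,\pi\otimes\pi')|\,\frac{du}{u} + (\text{negligible}),
\]
after collecting the powers of $\eta$ (one factor $\eta^{k+1}$ cancels $(\eta^{-1})^k$ leaving $\eta$; the Gamma-integral supplies $\eta^{-1}$; together with a bookkeeping factor this gives the stated $\eta^2$, and one can always enlarge the constant to make it literally $\eta^2$ since $\eta \leq 1/55$), and the negligible errors are absorbed into $\tfrac14(100)^{-(k+1)}$.

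The step I expect to be the main obstacle — and where care is genuinely needed — is making the interchange of $k$-th differentiation with the Dirichlet series and the subsequent Mellin/Gamma-integral manipulation rigorous and quantitative: one must verify that the weight $\frac{(\log \mathrm{N}\a)^k}{k!}\mathrm{N}\a^{-\eta}$ really does concentrate on $[y, y^{\Cr{yexp3}}]$ with the constants $\Cr{yexp3}$, $\Cr{yexp2}$ in Proposition \ref{4.2}, that the tails and the prime-power terms are dominated (invoking \eqref{eqn:almost-GRC}, GRC for $\pi'$, and Lemma \ref{1.11} for the absolute convergence that justifies term-by-term differentiation at $\re s = 1+\eta$), and that passing from $|S_{y,u}|$ inside the integral to the left-hand side does not lose the right power of $\eta$. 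This is precisely the kind of bookkeeping Weiss carries out in the proof of Theorem 4.2 of \cite{weiss} and Gallagher in \cite{Gallagher2}; our task is to check it survives uniformly in $K$, $\pi$, $\pi'$, which it does because every estimate used (the zero-counting of Lemma \ref{1.10}, the mean-value of Lemma \ref{1.11}, the choice $y = e^{\Cr{yexp1}\mathcal{L}}$) is already uniform. I would organize the write-up as: (i) term-by-term differentiation and truncation to $[y,y^{\Cr{yexp3}}]$ with explicit tail bounds; (ii) reduction to prime ideals via GRC; (iii) the Gamma-integral identity converting $\sum_\p \Lambda_{\pi\otimes\pi'}(\p)(\log\mathrm{N}\p)^k \mathrm{N}\p^{-1-\eta-i\tau}/k!$ into $\int |S_{y,u}|\,du/u$; (iv) collecting powers of $\eta$ and absorbing all error terms into $\tfrac14(100)^{-(k+1)}$ using \eqref{kappa-ineq}.
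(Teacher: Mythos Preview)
Your second paragraph has the right skeleton and essentially matches the paper: differentiate the Dirichlet series for $F$ term by term, introduce the weight $j_k(u)=u^ke^{-u}/k!$, truncate the $\a$-sum to $y<\mathrm{N}\a\leq y^{\Cr{yexp3}}$ using the concentration of $j_k$, strip out prime powers, and convert the remaining prime sum into $\int|S_{y,u}|\,du/u$. But several details are off.

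First, your opening paragraph about \eqref{eqn:Fprime}, poles, and near/far zeros is an unnecessary detour. The paper never invokes \eqref{eqn:Fprime} for the upper bound: it simply writes
\[
\frac{(-1)^{k+1}\eta^{k+1}}{k!}F^{(k)}(\xi)=\eta\sum_{(\a,S)=1}\frac{\Lambda_{\pi\otimes\pi'}(\a)}{\mathrm{N}\a^{1+i\tau}}\,j_k(\eta\log\mathrm{N}\a)
\]
and works with the right-hand side directly. No zero sums, no pole term, no $O(8^k\mathcal{L})$ appear.

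Second, the mechanism that turns the prime sum into the $u$-integral is \emph{summation by parts}, not a ``Gamma-integral identity''. Writing $g(u)=j_k(\eta\log u)$ one has $g'(u)=\eta\,j_k'(\eta\log u)/u$, so
\[
\sum_{y<\mathrm{N}\p\leq y^{\Cr{yexp3}}}\frac{\Lambda_{\pi\otimes\pi'}(\p)}{\mathrm{N}\p^{1+i\tau}}j_k(\eta\log\mathrm{N}\p)
= S_{y,y^{\Cr{yexp3}}}\,j_k(\eta\log y^{\Cr{yexp3}})-\eta\int_y^{y^{\Cr{yexp3}}}S_{y,u}\,j_k'(\eta\log u)\,\frac{du}{u}.
\]
The crucial input is the \emph{pointwise} bound $|j_k'(u)|=|j_{k-1}(u)-j_k(u)|\leq 1$, not an $L^1$ bound on $j_k$. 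This is where your $\eta$-bookkeeping went wrong: the outer factor $\eta$ together with the $\eta$ from $g'(u)$ gives $\eta^2$ exactly, with no ``Gamma-integral supplies $\eta^{-1}$'' step. Your accounting $\eta\cdot\eta^{-1}=\eta^2$ is arithmetically false, and you cannot recover $\eta^2$ from $O(1)$ by ``enlarging a constant'' since $\eta\leq 1/55$.

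Third, the prime-power removal does not use GRC for $\pi'$. The paper uses the elementary inequality $\mathrm{N}\a^{-1/2}j_k(\eta\log\mathrm{N}\a)\leq (2\eta)^k\mathrm{N}\a^{-\eta}\leq (110)^{-k}\mathrm{N}\a^{-\eta}$ together with Lemma~\ref{1.11}, which requires no Ramanujan-type input.

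In short: drop the \eqref{eqn:Fprime} preamble, replace the vague Gamma-density step by Abel summation plus $|j_k'|\leq 1$, and redo the $\eta$ count; then your outline coincides with the paper's proof.
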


\begin{proof}
Let $M=300\eta\log y$ and $y=e^{\Cr{yexp1}\mathcal{L}}$ for some $\Cr{yexp1}$, which we will take to be sufficiently large.  For $u>0$, define 
\[
j_ k (u)=\frac{u^ k  e^{-u}}{ k !},
\]
which satisfies
\begin{equation*}
j_ k (u)\leq\begin{cases}
(100)^{- k }&\mbox{if $u\leq  k /300$,}\\
(110)^{- k }e^{-u/2}&\mbox{if $u\geq 20 k $.}
\end{cases}
\end{equation*}
Letting $\Cr{yexp3}\geq12000$ be sufficiently large, we thus have
\begin{equation}
\label{eqn:j-ineq-2}
j_ k (\eta\log(\mathrm{N}\a))\leq\begin{cases}
(110)^{- k }&\mbox{if $\mathrm{N}\a\leq y$,}\\
(100)^{- k }(\mathrm{N}\a)^{-\eta/2}&\mbox{if $\mathrm{N}\a\geq y^{\Cr{yexp3}}$}.
\end{cases}
\end{equation}
Differentiating the Dirichlet series for $F(s)$ directly, we obtain
\begin{equation*}
\label{RP-sum}
\frac{(-1)^{ k +1}\eta^{ k +1}}{ k !}\frac{d^ k   F}{ds^ k  }(\xi)=\eta\sum_{(\a, S )=1}\frac{\Lambda_{\pi\otimes\pi^\prime}(\a)}{\mathrm{N}\a^{1+i\tau}}j_ k (\eta\log(\mathrm{N}\a))
\end{equation*}
Splitting the above sum $\sum$ in concert with the inequality \eqref{eqn:j-ineq-2} and suppressing the summands, we write
\[
\sum=\sum_{\mathrm{N}\p\in(0,y]\cup(y^{\Cr{yexp3}},\infty)}+\sum_{\textup{$\a$ not prime}}+\sum_{y<\mathrm{N}\p\leq y^{\Cr{yexp3}}}.
\]
We will estimate these three sums separately.

We use Lemma \ref{1.11} and \eqref{kappa-ineq} to obtain
\begin{align*}
\Big|\eta\sum_{\mathrm{N}\p\in(0,y]\cup(y^{\Cr{yexp3}},\infty)}\Big|&\ll \eta(110)^{- k }\Big(\sum_{\substack{\mathrm{N}\a\leq y \\ (\a, S )=1}}\frac{|\Lambda_{\pi\otimes\pi'}(\a)|}{\mathrm{N}\a}+\sum_{(\a, S )=1}\frac{|\Lambda_{\pi\otimes\pi'}(\a)|}{\mathrm{N}\a^{1+\eta/2}}\Big)\notag\\
&\ll \eta(110)^{- k }\Big(\frac{1}{\eta}+\log y+d'd\log(\q(\pi)\q(\pi'))\Big)\notag\\
&\ll (110)^{- k }(1+\eta\log y+\eta\mathcal{L})\ll  k (110)^{- k }.
\end{align*}
Since $\eta\leq 1/55$, the identity $\sum_{m\geq0}j_m(u)=1$ implies that
\[
\mathrm{N}\a^{-1/2}j_ k (\eta\log(\mathrm{N}\a))=(2\eta)^ k  \mathrm{N}\a^{-\eta}j_ k (\log(\mathrm{N}\a)/2)\leq (110)^{- k }\mathrm{N}\a^{-\eta}.
\]
Thus, as above,
\begin{equation*}
\label{ineq2}
\Big| \eta\sum_{\textup{$\a$ not prime}}\Big|\ll \eta(110)^{- k }\sum_{\substack{\a=\p^m \\ m\geq 2 \\ (\a, S )=1}}\frac{|\Lambda_{\pi\otimes\pi'}(\a)|}{\mathrm{N}\a^{1/2+\eta}}\ll \eta(110)^{- k }\sum_{(\a, S )=1}\frac{|\Lambda_{\pi\otimes\pi'}(\a)|}{\mathrm{N}\a^{1+2\eta}}\ll  k (110)^{- k }.
\end{equation*}
as well.  Finally, recall that
\begin{equation*}
\label{dirichlet-poly}
S_{y,u}(\tau,\pi\otimes\pi')=\sum_{y<\mathrm{N}\p\leq u}\frac{\Lambda_{\pi\otimes\pi^\prime}(\p)}{\mathrm{N}\p^{1+i\tau}}.
\end{equation*}
Summation by parts gives us
\begin{equation*}
\sum_{y<\mathrm{N}\p\leq y^{\Cr{yexp3}}}=S_{y,y^{\Cr{yexp3}}}(\tau,\pi\otimes\pi')j_ k (\eta\log y^{\Cr{yexp3}})-\eta\int_y^{y^{\Cr{yexp3}}}S_{y,u}(\tau,\pi\otimes\pi')j_ k '(\eta\log u)\frac{du}{u}
\end{equation*}
since $S_{y,y}(\tau,\pi\otimes\pi')=0$.  Much like the above calculations,
\begin{equation*}
\label{ineq3}
|\eta S_{y,y^{\Cr{yexp3}}}(\tau,\pi\otimes\pi')j_ k (\eta\log y^{\Cr{yexp3}})|\ll \eta(110)^{- k }y^{-\Cr{yexp3}\eta/2}\sum_{\substack{\mathrm{N}\p\leq y^{\Cr{yexp3}} \\ (\p, S )=1}}\frac{|\Lambda_{\pi\otimes\pi'}(\p)|}{\mathrm{N}\p}\ll k (110)^{- k }.
\end{equation*}
Therefore, since $|j_ k '(u)|=|j_{ k -1}(u)-j_ k (u)|\leq j_{ k -1}(u)+j_ k (u)\leq 1$, we have
\begin{equation*}
\Big|\eta\sum_{y<\mathrm{N}\p\leq y^{\Cr{yexp3}}}\Big|\leq \eta^2\int_y^{y^{\Cr{yexp3}}}|S_{y,u}(\tau,\pi\otimes\pi')|\frac{du}{u}+O( k (110)^{- k }).
\end{equation*}
However, by \eqref{kappa-ineq} and the bound $\eta\gg\mathcal{L}^{-1}$, it follows that if $ k $ is sufficiently large, then each term of size $O( k (110)^{- k })$ is at most $\frac{1}{16}(100)^{-( k +1)}$.  The lemma follows.
\end{proof}

\subsection{Zero detection: The proof of Proposition \ref{4.2}}
\label{sec:zero_detection}

We now combine our upper and lower bounds on the derivatives of $F$ to prove Proposition \ref{4.2}.  Thus, we wish to show that if $\rho_0$ is a zero satisfying $|\rho_0-(1+i\tau)|\leq \eta$, then
\[
\frac{y^{\Cr{yexp2} \eta}}{(\log y)^3}\int_y^{y^{\Cr{yexp3}}}|S_{y,u}(\tau,\pi\otimes\pi')|^2\frac{du}{u}\gg1.
\]

Combining Lemmas \ref{lem:lower_bound_F} and \ref{lem:upper_bound_F}, we find that
\[
\eta^2\int_y^{y^{\Cr{yexp3}}}|S_{y,u}(\tau,\pi\otimes\pi')|\frac{du}{u}\geq\frac{1}{4}(100)^{-( k +1)}.
\]
Using \eqref{kappa-ineq}, we have
\[
\eta^2\int_y^{y^{\Cr{yexp3}}}|S_{y,u}(\tau,\pi\otimes\pi')|\frac{du}{u}\gg y^{-\Cr{yexp2}\eta/4},
\]
where $\Cr{yexp2}$ is sufficiently large.  Multiplying both sides by $y^{-\Cr{yexp2}\eta/4}$ yields
\[
y^{-\Cr{yexp2}\eta/4}\eta^2\int_y^{y^{\Cr{yexp3}}}|S_{y,u}(\tau,\pi\otimes\pi')|\frac{du}{u}\gg y^{-\Cr{yexp2}\eta/2}.
\]
Using \eqref{kappa-ineq} again, we have that $y^{-\Cr{yexp2}\eta/4}\eta^2\ll(\log y)^{-2}$, so
\[
\frac{1}{(\log y)^2}\int_y^{y^{\Cr{yexp3}}}|S_{y,u}(\tau,\pi\otimes\pi')|\frac{du}{u}\gg y^{-\Cr{yexp2}\eta/2}.
\]
Squaring both sides and applying the Cauchy-Schwarz inequality yields the proposition.

\subsection{Proof of Theorem \ref{thm:no_GRC}}

We conclude this section with the proof of Theorem \ref{thm:no_GRC}.  We content ourselves with proving the first statement, as the second statement follows along exactly the lines using Theorem \ref{main-theorem-2}.  Recall that in this setup, $\pi \in \mathcal{A}_d(K)$ and $\pi^\prime \in \mathcal{A}_{d^\prime}(K)$, where each of $d$ and $d^\prime$ is at most 2.  If either $\pi$ or $\pi^\prime$ satisfies GRC, then Theorem \ref{main-theorem} applies, and we obtain Theorem \ref{thm:no_GRC} as an immediate consequence.  If neither $\pi$ nor $\pi^\prime$ satisfies GRC, then necessarily $d=d^\prime =2$, and we may split further into two cases: either there exists an id\`ele class character $\chi$ such that $\pi \cong \pi^\prime \otimes \chi$ or not.  If there is such a character $\chi$, then we have $L(s,\pi \otimes   \pi^\prime, K) = L(s,\omega_\pi\otimes \chi,K) L(s,\mathrm{Sym}^2\pi \otimes \chi,K)$, where $\omega_\pi$ is the central character of $\pi$.  Since $\pi$ is not monomial (if it were, it would satisfy GRC), $\mathrm{Sym}^2\pi \in \mathcal{A}_3(K)$ by Gelbart-Jacquet \cite{GJ}, so we may apply Theorem \ref{main-theorem} to the two $L$-functions $L(s,\omega_\pi\otimes\chi,K)$ and $L(s,\mathrm{Sym}^2\pi\otimes \chi,K)$, which yields Theorem \ref{thm:no_GRC} in this case.  Finally, if there is no character $\chi$ such that $\pi^\prime \cong \pi \otimes \chi$, then by work of Ramakrishnan \cite[Theorem M]{MR1792292}, we have $\pi \otimes \pi^\prime \in \mathcal{A}_4(K)$ since again neither $\pi$ nor $\pi^\prime$ is monomial, lest it would satisfy GRC.  Thus, in this case, we may appeal to Corollary \ref{cor:lfzde} directly, and Theorem \ref{thm:no_GRC} follows in all cases.

%%%%%%%%%%%%%%%%%%%%%%%%%%%%%%%%%%%%%%%%%%%%%%
\section{Proof of Theorems \ref{effective-hoheisel}, \ref{thm:hoheisel_sato_tate}, and \ref{thm:least_sato_tate}}
%%%%%%%%%%%%%%%%%%%%%%%%%%%%%%%%%%%%%%%%%%%%%%

\label{sec:applications}

In this section, we consider the arithmetic applications of the zero-density estimates provided in Theorem \ref{main-theorem} and Corollary \ref{cor:lfzde} to approximate versions of Hoheisel's short interval prime number theorem.  We prove Theorems \ref{effective-hoheisel} and \ref{thm:hoheisel_sato_tate}, and Theorem \ref{thm:least_sato_tate} follows readily from Theorem \ref{thm:hoheisel_sato_tate}.

\subsection{Proof of Theorem \ref{effective-hoheisel}}
\label{subsec:general}
We first prove the explicit formula \eqref{eqn:shakedown_street_2} for the right hand side of Theorem \ref{effective-hoheisel} without making reference to the size of $\Lambda_{\pi\otimes\tilde{\pi}}(\a)$.  Note that the analogous result in \cite[Proof of Theorem 1.4]{AT} requires that the mean value of $\Lambda_{\pi\otimes\tilde{\pi}}(\a)$ remain bounded over short intervals, and the analogous result in \cite[Proof of Theorem 1.1]{Motohashi} requires an asymptotic estimate for a certain sum of Dirichlet coefficients with a power-saving error term.  Our explicit formula only uses the well-known fact that $\Lambda_{\pi\otimes\tilde{\pi}}(\a)\geq0$ for all $\a$; it holds regardless of whether $\pi$ is self-dual.

Let $x\geq2$.  Define
\[
\psi_{\pi\otimes\tilde{\pi}}(x)=\int_0^x \Big(\sum_{\mathrm{N}\a\leq t}\Lambda_{\pi\otimes\tilde{\pi}}(\a)\Big)dt=-\frac{1}{2\pi i}\int_{2-i\infty}^{2+i\infty}\frac{L'}{L}(s,\pi\otimes\tilde{\pi},K)\frac{x^{s+1}}{s(s+1)}ds.
\]
Note that the sum in the first integrand is monotonically increasing.  Thus if $0<y<x$ and $x+y>1$, then by the mean value theorem,
\begin{equation}
\label{eqn:mvt}
-\frac{\psi_{\pi\otimes\tilde{\pi}}(x-y)-\psi_{\pi\otimes\tilde{\pi}}(x)}{y}\leq \sum_{\mathrm{N}\a\leq x}\Lambda_{\pi\otimes\tilde{\pi}}(\a)\leq \frac{\psi_{\pi\otimes\tilde{\pi}}(x+y)-\psi_{\pi\otimes\tilde{\pi}}(x)}{y}.
\end{equation}
By a standard residue theorem computation,
\[
\psi_{\pi\otimes\tilde{\pi}}(x)=\frac{x^2}{2}-\sum_{\rho\neq0,-1}\frac{x^{\rho+1}}{\rho(\rho+1)}-(\mathrm{Res}_{s=0}+\mathrm{Res}_{s=-1})\frac{L'}{L}(s,\pi\otimes\tilde{\pi},K)\frac{x^{s+1}}{s(s+1)},
\]
where $\rho$ runs over all zeros of $L(s,\pi\otimes\tilde{\pi},K)$.  Thus
\begin{align}
\label{eqn:captain}
\pm\frac{\psi_{\pi\otimes\tilde{\pi}}(x\pm y)-\psi_{\pi\otimes\tilde{\pi}}(x)}{y}&=x\pm\frac{y}{2}\mp\sum_{\rho\neq0,-1}\frac{x^{\rho+1}((1\pm \frac{y}{x})^{\rho+1}-1)}{y\rho(\rho+1)}\notag\\
&\mp (\mathrm{Res}_{s=0}+\mathrm{Res}_{s=-1})\frac{L'}{L}(s,\pi\otimes\tilde{\pi},K)\frac{x^{s+1}((1\pm \frac{y}{x})^{s+1}-1)}{ys(s+1)}.
\end{align}

We first address the sum over zeros, restricting our attention to those $\rho=\beta+i\gamma$ for which $0<\beta<1$.  Observe that for each such $\rho$,
\[
\mp\frac{x^{\rho+1}((1\pm\frac{y}{x})^{\rho+1}-1)}{y\rho(\rho+1)}=-\frac{x^\rho}{\rho}\mp y w_\rho^{\pm} x^{\rho-1},
\]
where
\[
w_\rho^{\pm}:=\frac{(1\pm\frac{y}{x})^{\rho+1}-1\mp(\rho+1)\frac{y}{x}}{\rho(\rho+1)(\frac{y}{x})^2}.
\]
Since $0<\beta<1$, $0<y<x$ and $x+y>1$, a minor change in the proof of \cite[Lemma 2.1]{MR3434887} yields the bound $|w_{\rho}^{\pm}|\leq 1$ in both $\pm$ cases.  Thus for any $1\leq T\leq x$, the sum over zeros $\rho=\beta+i\gamma$ with $0<\beta<1$ in \eqref{eqn:captain} equals
\begin{equation}
\label{eqn:big_eyed_fish}
-\sum_{\substack{|\gamma|\leq T \\ 0<\beta<1}}\frac{x^\rho}{\rho}+O\Big(\sum_{\substack{|\gamma|\geq T \\ 0<\beta<1}}\Big|\frac{x^{\rho+1}((1\pm\frac{y}{x})^{\rho+1}-1)}{y\rho(\rho+1)}\Big|+y\sum_{\substack{|\gamma|\leq T \\ 0<\beta<1}}x^{\beta-1}\Big).
\end{equation}
Using \eqref{eqn:zero-count} and the fact that $0<y<x$, we see that the first sum over zeros in the error term of \eqref{eqn:big_eyed_fish} is
\[
\ll \frac{x^2}{y}\sum_{\substack{|\gamma|\geq T \\ 0<\beta<1}}\frac{1}{|\rho|^2}\ll [K:\Q]d^2\frac{x^2 (\log T)\log \mathfrak{q}(\pi)}{y T}.
\]
We choose $T\geq1$, $A=4\Cr{c1}d^2$, $x=(\q(\pi)^2 T^{[K:\Q]})^A$, and 
\[
y=x^{1-\frac{1}{2A}}(\log x)\Big(\sum_{\substack{|\gamma|\leq T \\ 0<\beta<1}}x^{\beta-1}\Big)^{-1/2}
\]
so that the sum over zeros $\rho=\beta+i\gamma$ in \eqref{eqn:captain} equals
\begin{equation}
\label{eqn:zero_error}
-\sum_{\substack{|\gamma|\leq T \\ 0<\beta<1}}\frac{x^{\rho}}{\rho}+O\Big(x^{1-\frac{1}{2A}}(\log x)\Big(\sum_{|\gamma|\leq T}x^{\beta-1}\Big)^{1/2}\Big).
\end{equation}

With our choice of $y$, the contribution to the sum over zeros $\rho=\beta+i\gamma$ in \eqref{eqn:captain} with $\beta\leq0$ is smaller than the error term in \eqref{eqn:zero_error}.  The same can be said for the contribution from the residues in \eqref{eqn:captain}, which can be computed using \cite[Equation 5.24]{IK}.  Collecting all of our estimates, we now see that
\begin{equation}
\label{eqn:shakedown_street_2}
\sum_{\mathrm{N}\a\leq x}\Lambda_{\pi\otimes\tilde{\pi}}(\a)= x-\sum_{\substack{|\gamma|\leq T \\ 0<\beta<1}}\frac{x^{\rho}}{\rho}+O\Big(x^{1-\frac{1}{2A}}(\log x)\Big(\sum_{|\gamma|\leq T}x^{\beta-1}\Big)^{1/2}\Big).
\end{equation}
We now see that for any $1\leq h\leq x$,
\begin{align}
\label{eqn:shakedown_street}
\Big|\sum_{x<\mathrm{N}\a\leq x+h}\Lambda_{\pi\otimes\tilde{\pi}}(\a)-h\Big|&= \Big|-\sum_{\substack{|\gamma|\leq T \\ 0<\beta<1}}\frac{(x+h)^{\rho}-x^\rho}{\rho}+O\Big(x^{1-\frac{1}{2A}}(\log x)\Big(\sum_{\substack{|\gamma|\leq T \\ 0<\beta<1}}x^{\beta-1}\Big)^{1/2}\Big)\Big|\notag\\
&\leq h\sum_{\substack{|\gamma|\leq T \\ 0<\beta<1}}x^{\beta-1}+O\Big(x^{1-\frac{1}{2A}}(\log x)\Big(\sum_{\substack{|\gamma|\leq T \\ 0<\beta<1}}x^{\beta-1}\Big)^{1/2}\Big).
\end{align}

To bound the sums over zeros in \eqref{eqn:shakedown_street}, note that by the functional equation for $L(s,\pi\otimes\tilde{\pi},K)$ and the zero-free region in Lemma \ref{1.9} that
\begin{align*}
	\sum_{|\gamma|\leq T}x^{\beta-1}\leq x^{\beta_{\mathrm{exc}}-1}+2\int_{1/2}^{1-\Cr{zero-free}/\mathcal{L}} x^{\sigma-1}dN_{\pi\otimes\tilde{\pi}}(\sigma,T).
\end{align*}
For simplicity of calculations, we observe from Theorem \ref{main-theorem} and our choice of $x$ and $T$ that
\begin{equation}
\label{eqn:L_to_x}
\mathcal{L}=\frac{1}{4\Cr{c1}}\log x,\qquad N_{\pi\otimes\tilde{\pi}}(\sigma,T)\ll d^2 (\q(\pi)^2T^{[K:\Q]})^{\Cr{c1}d^2(1-\sigma)}= d^2 x^{\frac{1}{4}(1-\sigma)}.
\end{equation}
If $\pi$ satisfies GRC, then we use Theorem \ref{main-theorem}, Lemma \ref{1.9}, and \eqref{eqn:L_to_x} to obtain
\begin{align}
\label{eqn:bound_sum_zeros_LFZDE}
\int_{1/2}^{1-\Cr{zero-free}/\mathcal{L}} x^{\sigma-1}\, dN_{\pi\otimes\tilde{\pi}}(\sigma,T) &= x^{-1/2} N_{\pi\otimes\tilde{\pi}}(1/2,T) + \log x \int_{1/2}^{1-\Cr{zero-free}/\mathcal{L}} x^{\sigma-1} N_{\pi\otimes\tilde{\pi}}(\sigma,T)\,d\sigma\notag\\
	&\ll x^{-3/8} + d^2\log x \int_{1/2}^{1-4\Cr{c1}\Cr{zero-free}/\log x} x^{\frac{3}{4}(\sigma-1)} \,d\sigma\notag\\
	&\ll x^{-3/8}+d^2(x^{-3/8} + e^{-3\Cr{c1}\Cr{zero-free}}).
\end{align}
If $\pi\in\mathcal{A}_2(K)$ and GRC is not satisfied, then we apply Theorem \ref{thm:no_GRC} instead of Theorem \ref{main-theorem} and arrive at the same conclusion as \eqref{eqn:bound_sum_zeros_LFZDE}.

Applying \eqref{eqn:bound_sum_zeros_LFZDE} to bound the sum over zeros in \eqref{eqn:shakedown_street}, we find that
\[
\Big|\sum_{x<\mathrm{N}\a\leq x+h} \Lambda_{\pi\otimes\tilde{\pi}}(\a)-h\Big|\leq\Big(\Cl[abcon]{int-ineq2} d^2 e^{-3\Cr{c1}\Cr{zero-free}}+o_{h\to\infty}(1)\Big)h+O(x^{1-\frac{1}{2A}}\log x)
\]
where the $o_{h\to\infty}(1)$ term is the contribution from the possible Siegel zeros and $\Cr{int-ineq2}$ is sufficiently large.  
Because $\Cr{c1}$ is both large and absolute, we may replace $\Cr{c1}$ with the larger constant $\max\{\Cr{c1},(3\Cr{zero-free})^{-1}\log(4 \Cr{int-ineq2} d^2)\}$.  This yields
\[
\Big|\sum_{x<\mathrm{N}\a\leq x+h} \Lambda_{\pi\otimes\tilde{\pi}}(\a)
	-h\Big|\leq \Big(\frac{1}{4}+o_{h\to\infty}(1)\Big)h+ O(x^{1-\frac{1}{2A}}\log x).
\]
Finally, taking $h\gg  x^{1-\frac{1}{2A}}(\log x)^2$, the theorem follows when $T$ (hence $x$) is sufficiently large.

\subsection{The Sato-Tate conjecture}
\label{subsec:sato-tate}

Following Shahidi \cite[pg. 162]{Shahidi}, we now specify the representations $\pi$ for which we expect the Sato-Tate conjecture to hold.  Let $K$ be a totally real field, and let $\pi\in\mathcal{A}_2(K)$ be non-CM and have trivial central character.  We call $\pi$ {\bf genuine} when $\pi$ is not a twist by an id{\'e}le class character of either a monomial representation or a representation of Galois type.  (A monomial representation is a representation $\rho$ for which $\rho\otimes\chi\cong\rho$ for some nontrivial character $\chi$ of $K^\times \setminus \mathbb{A}_K^{\times}$.  A representation $\rho$ is of Galois type if $\rho_{\v}$ factors through the Galois group of $\bar{K}_{\v}/K_{\v}$ for every archimedean place $\v$ of $K$.)

Examples of genuine $\pi$ include those associated to (1) newforms of even weight $k\geq2$ and trivial character, (2) modular elliptic curves, (3) Hilbert modular forms, and (4) Hecke-Maass forms.  The hypothesis of GRC is known to hold for most of these examples: for newforms by Deligne \cite{Deligne}, for elliptic curves by Hasse (see Silverman \cite[Chapter 5]{Silverman}), and for Hilbert modular forms over totally real number fields with each weight both even and at least 2 by Blasius \cite{Blasius}.

Recall that the Sato-Tate conjecture concerns the distribution of the quantities $\lambda_\pi(\p)=2\cos\theta_\p$ as $\p$ ranges over primes for which $\pi_\p$ is unramified, where $\theta_\p\in[0,\pi]$.  At each such prime $\p$, the local factor of the $n$-th symmetric power $L$-function is given by
\[
L_\p(s,\mathrm{Sym}^n\pi,K) =\prod_{j=0}^n (1-e^{i\theta_\p (n-2j)} \mathrm{N}\p^{-s})^{-1} = \sum_{k=0}^\infty \frac{U_n(\cos(k\theta_{\p}))}{\mathrm{N}\p^{s}}, 
\]
where $U_n$ is the $n$-th Chebyshev polynomial of the second kind.  At ramified primes $\p$, it follows from \cite{MS} there are numbers $\beta_{\mathrm{Sym}^n\pi}(j,\p)$ of absolute value at most $ \mathrm{N}\p^{\frac{1}{2}-\frac{1}{(n+1)^2+1}}$ for which the local factor is given by
\[
L_\p(s,\mathrm{Sym}^n\pi,K)=\prod_{j=0}^n (1-\beta_{\mathrm{Sym}^n\pi}(j,\p)\mathrm{N}\p^{-s})^{-1}.
\]
(If $\p$ is ramified, then some of the $\beta_{\mathrm{Sym}^n\pi}(j,\p)$ might equal zero.)  We note that $L(s,\mathrm{Sym}^1\pi,K)=L(s,\pi,K)$ and $L(\mathrm{Sym}^0 \pi,K)=\zeta_K(s)$.

In Theorem \ref{thm:hoheisel_sato_tate}, our goal is to estimate for $I\subseteq[-1,1]$ the summation
\begin{equation}
\label{eqn:satotategoal}
\sum_{\substack{x<\mathrm{N}\p\leq x+h \\ (\p,S)=1}}\mathbf{1}_{I}(\cos\theta_{\p})\log \mathrm{N}\p
\end{equation}
where $S$ is the set of $\p$ for which $\pi$ is ramified and $h\geq x^{1-\delta}$ for some $\delta>0$.  Recall from the discussion before Theorem \ref{thm:hoheisel_sato_tate} in Section \ref{sec:Section1} that $I$ can be $\mathrm{Sym}^N$-minorized if there exist $b_0,\dots,b_N\in\R$ such that $b_0>0$ and \eqref{eqn:symn-minor1} holds for all $t\in[-1,1]$.  Thus, if $I$ can be $\mathrm{Sym}^n$-minorized, we can obtain a non-trivial lower bound for \eqref{eqn:satotategoal} by considering an appropriate linear combination of the logarithmic derivatives of $L(s,\mathrm{Sym}^n \pi,K)$ for $n\leq N$.

\begin{proof}[Proof of Theorem \ref{thm:hoheisel_sato_tate}]

The upper bound follows from GRC at the unramified primes and the Brun-Titchmarsh theorem \cite[Corollary 2]{MR0374060}, so we proceed to the lower bound.  Suppose that $I\subset[-1,1]$ can be $\mathrm{Sym}^n$-minorized and that $L(s,\mathrm{Sym}^n \pi,K)$ is automorphic for each $0\leq n\leq N$.  Let $b_0,\dots,b_n$ be as in \eqref{eqn:symn-minor1} and set $B=\max_{0\leq n\leq N}|b_n|/b_0$.  Let $T\geq1$, $A=4\Cr{c1}(N+1)^4$, and
\[
x=\max\Big\{D_K[K:\Q]^{[K:\Q]},T^{[K:\Q]}\max_{0\leq n\leq N}\q(\mathrm{Sym}^n\pi)\Big\}^A.
\]

First, observe that
\begin{equation}
\label{eqn:leftovers}
\sum_{\substack{\mathrm{N}\p\leq x \\ (\p,S)=1}}U_n(\cos\theta_{\p})\log\mathrm{N}\p=\sum_{\substack{\mathrm{N}\a\leq x \\ (\a,S)=1}}\Lambda_{\mathrm{Sym}^n\pi}(\a)-\sum_{\substack{\mathrm{N}\p^m\leq x \\ m\geq2 \\ (\p,S)=1}}\Lambda_{\mathrm{Sym}^n\pi}(\p^m).
\end{equation}
There are $\ll (\log x)\log\q(\mathrm{Sym}^n\pi)$ ramified prime powers whose norm is in the interval $[x,x+h]$.  Using \eqref{eqn:almost-GRC}, the contribution to the sum over prime powers on the right hand side of \eqref{eqn:leftovers} arising from the ramified prime powers is
\begin{equation}
\label{eqn:error_1_ST}
\ll n\sum_{\substack{\mathrm{N}\p^m\leq x \\ m\geq2 \\ (\p,S)=1}}\Lambda_{K}(\p^m)\mathrm{N}\p^{m/2}\ll n x^{1/2} (\log x)^3.
\end{equation}
Using GRC at the unramified primes, the contribution to the sum over prime powers on the right hand side of \eqref{eqn:leftovers} arising from the unramified prime powers is
\begin{equation}
\label{eqn:error_2_ST}
\ll n\sum_{\substack{\mathrm{N}\a\leq x \\ (\a,S)=1}}\Lambda_K(\a) \ll n[K:\Q]\sqrt{x}\ll n x^{1/2} (\log x)^3.
\end{equation}
This establishes the lower bound
\begin{equation}
\label{eqn:double_sum_ST}
\sum_{\substack{\mathrm{N}\p\leq x \\ (\p,S)=1}}\mathbf{1}_{I}(\cos\theta_{\p})\log \mathrm{N}\p\geq \sum_{n=0}^N b_n \sum_{\substack{\mathrm{N}\a\leq x \\  (\a,S)=1}}\Lambda_{\mathrm{Sym}^n\pi}(\a)-O(b_0 B N^2 x^{1/2}(\log x)^3).
\end{equation}

It is unclear whether the double sum in \eqref{eqn:double_sum_ST} is monotonically increasing for all $x$, so we cannot use the arguments from the proof of Theorem \ref{effective-hoheisel}.  Thus we begin with the standard Perron integral \cite[Chapter 17]{Davenport}; if $\sigma_0=1+\frac{1}{\log x}$, then
\[
\sum_{\substack{\mathrm{N}\a\leq x \\  (\a,S)=1}}\Lambda_{\mathrm{Sym}^n\pi}(\a)=-\frac{1}{2\pi i}\int_{\sigma_0-iT}^{\sigma_0+iT}\frac{L_S'}{L_S}(s,\mathrm{Sym}^n \pi)\frac{x^s}{s}ds+O\Big(x\sum_{(\a,S)=1}\frac{|\Lambda_{\mathrm{Sym}^n \pi}(\a)|}{\mathrm{N}\a^{\sigma_0}}\min\Big\{1,\frac{1}{T|\log\frac{x}{\mathrm{N}\a}|}\Big\}\Big),
\]
where $L_S(s,\mathrm{Sym}^n\pi,K)$ is the partial $L$-function with the Euler factors at ramified $\p$ removed.  Let $H\geq 2$.  If $|\mathrm{N}\a-x|> \frac{x}{H}$, then $|\log \frac{x}{\mathrm{N}\a}|\gg\frac{1}{H}$.  Therefore,
\begin{align}
\label{eqn:sum_with_ramified}
x\sum_{\substack{|\mathrm{N}\a-x| > \frac{x}{H} \\ (\a,S)=1}} \frac{|\Lambda_{\mathrm{Sym}^n \pi}(\a)|}{\mathrm{N}\a^{\sigma_0}}\min\Big\{1,\frac{1}{T|\log\frac{x}{\mathrm{N}\a}|}\Big\}\leq x\frac{H}{T}\sum_{\substack{(\a,S)=1}}\frac{|\Lambda_{\mathrm{Sym}^n \pi}(\a)|}{\mathrm{N}\a^{\sigma_0}}.
\end{align}
Using Lemma \ref{1.10} to bound the sum over unramified primes, we bound \eqref{eqn:sum_with_ramified} by $O(\frac{Hx}{T}(\log x)^2)$.

It remains to estimate
\[
x\sum_{\substack{|\mathrm{N}\a-x|\leq \frac{x}{H} \\ (\a,S)=1}}\frac{|\Lambda_{\mathrm{Sym}^n \pi}(\a)|}{\mathrm{N}\a^{\sigma_0}}
=x\sum_{\substack{|\mathrm{N}\p^m-x|\leq \frac{x}{H} \\ (\p,S)=1}} \frac{|\Lambda_{\mathrm{Sym}^n \pi}(\p^m)|}{(\mathrm{N}\p^m)^{\sigma_0}}.
\]
There are $\ll [K:\Q] x/H$ powers of unramified primes with norm between $x(1-\frac{1}{H})$ and $x(1+\frac{1}{H})$, so GRC at the unramified primes implies that the above display is bounded by $[K:\Q](n+1)x(\log x)/H\ll x(\log x)^2/H$.  Collecting the above estimates and recalling the definitions of $x$ and $T$, we choose $H=T^{1/2}$ to obtain
\[
\sum_{\substack{\mathrm{N}\a\leq x \\ (\a,S)=1}}\Lambda_{\mathrm{Sym}^n \pi}(\a)=-\frac{1}{2\pi i}\int_{\sigma_0-iT}^{\sigma_0+iT}\frac{L_S'}{L_S}(s,\mathrm{Sym}^n \pi)\frac{x^s}{s}ds+O(x^{1-\frac{1}{2A}}(\log x)^2).
\]

We deduce from a standard residue theorem computation that
\begin{equation}
\label{eqn:ST_explicit_formula}
\sum_{\substack{\mathrm{N}\a\leq x \\ (\a,S)=1}}\Lambda_{\mathrm{Sym}^n \pi}(\a)=r(\mathrm{Sym}^n\pi)x-\sum_{\substack{\rho=\beta+i\gamma \\ 0\leq\beta<1 \\ |\gamma|\leq T}}\frac{x^\rho}{\rho}+O\Big(\sum_{\p\in S}\sum_{j=1}^{n+1}\frac{x^{\rho_{\p}}}{\rho_{\p}}+x^{1-\frac{1}{2A}}(\log x)^2\Big),
\end{equation}
where $\rho_{\p}=(\log \beta_{\mathrm{Sym}^n\pi}(j,\p))/\log\mathrm{N}\p$.  Using \eqref{eqn:LRS} (which also holds at the ramified primes by the work in \cite{MS}) and the fact that $(n+1)\log \q(\pi)\ll\log x$, we find that the error term in \eqref{eqn:ST_explicit_formula} is $O(x^{1-\frac{1}{2A}}(\log x)^2)$. Thus the lower bound
\[
\sum_{x<\mathrm{N}\p\leq x+h} {\bf 1}_{I}(\cos\theta_{\p})\log\mathrm{N}\p\geq b_0 \Big(h-B\sum_{n=0}^N ~\sum_{\substack{\rho=\beta+i\gamma \\ |\gamma|\leq T \\ 0\leq \beta<1}}\frac{(x+h)^\rho-x^\rho}{\rho}-O(BN x^{1-\frac{1}{2A}}(\log x)^3)\Big)
\]
easily follows from \eqref{eqn:double_sum_ST} and \eqref{eqn:ST_explicit_formula}, where $\rho$ runs through the zeros of $L(s,\mathrm{Sym}^n\pi,K)$.

By a calculation nearly identical to \eqref{eqn:bound_sum_zeros_LFZDE}, we deduce the existence of a sufficiently large $\Cl[abcon]{int-ineq3}>0$ such that
\begin{align*}
&\sum_{x<\mathrm{N}\p\leq x+h} {\bf 1}_{I}(\cos\theta_{\p})\log\mathrm{N}\p \\
&\geq b_0\Big((1 - \Cr{int-ineq3}BN e^{-3\Cr{c1}\Cr{zero-free}}-o_{h\to\infty}(1))h-O(BN  x^{1-\frac{1}{2A}}(\log x)^3)\Big).
\end{align*}
The $o_{h\to\infty}(1)$ term arises from the contributions of the possible Siegel zeros, and we make $\Cr{c1}$ sufficiently large to account for the errors in \eqref{eqn:error_1_ST} and \eqref{eqn:error_2_ST}.  Because $\Cr{c1}$ is large and absolute, we may replace $\Cr{c1}$ with the larger constant $\max\{\Cr{c1},(3\Cr{zero-free})^{-1}\log(4 \Cr{int-ineq3}BN)\}$, so
\[
\sum_{x<\mathrm{N}\p\leq x+h}{\bf 1}_{I}(\cos\theta_{\p})\log\mathrm{N}\p\geq b_0\Big(\Big(\frac{3}{4}-o_{h\to\infty}(1)\Big)h-O(BN x^{1-\frac{1}{2A}}(\log x)^3)\Big).
\]
Choosing $h\gg BN x^{1-\frac{1}{2A}}(\log x)^4$, we obtain the lower bound
\begin{equation}
\label{eqn:hoh_linnik}
\sum_{x<\mathrm{N}\p\leq x+h}{\bf 1}_{I}(\cos\theta_{\p})\log\mathrm{N}\p\geq\frac{b_0 h}{2}(1-o_{h\to\infty}(1)).
\end{equation}
when $T$ (hence $x$) is sufficiently large.
\end{proof}

\begin{proof}[Proof of Theorem \ref{thm:least_sato_tate}]
We now address the contribution from Siegel zeros in \eqref{eqn:hoh_linnik}.  Note that if $\pi\in\mathcal{A}_2(K)$ is genuine, then $\mathrm{Sym}^n\pi\in\mathcal{A}_{n+1}(K)$ for $n\leq 4$. Also, by \cite[Theorem B]{Hoffstein}, if $\mathrm{Sym}^j\pi\in\mathcal{A}_{j+1}(K)$ for $j\in \{n-2,n,n+2\}$, then $L(s,\mathrm{Sym}^n\pi,K)$ has no Siegel zero.  We conclude that if $N\geq2$, $I$ can be $\mathrm{Sym}^N$-minorized, and $L(s,\mathrm{Sym}^n\pi,K)$ is automorphic over $K$ for all $n\leq N$, then the only $n\leq N$ for which $L(s,\mathrm{Sym}^n\pi,K)$ might have a Siegel zero are $n\in\{0,N-1,N\}$.

By hypothesis, $I$ can be $\mathrm{Sym}^N$-minorized without admitting Siegel zeros; thus $b_n\leq0$ for each $n\in\{N-1,N\}$ such that $L(s,\mathrm{Sym}^n\pi,K)$ has a Siegel zero.  Thus if $n\in\{N-1,N\}$ and $L(s,\mathrm{Sym}^n\pi,K)$ has a Siegel zero, then such a Siegel zero gives a positive contribution to the lower bound in \eqref{eqn:hoh_linnik}, and we may discard this contribution.  Therefore, if $\zeta_K(s)$ has no Siegel zero, then we can omit the $o(1)$ error term from  \eqref{eqn:hoh_linnik}.  Taking $h=x$, we see that there is an unramified $\p$ such that $\cos\theta_{\p}\in I$ and $\p\leq 2x$, where $x$ is defined at the beginning of the proof of Theorem \ref{thm:hoheisel_sato_tate} after taking $T$ to be a sufficiently large absolute constant.

It remains to bound the maximum of the analytic conductors.  Let $0\leq n\leq N$.  For each unramified $\p$, consider the identity
\[
L_{\p}(s,\pi\otimes\mathrm{Sym}^{n-1}\pi,K)=L_{\p}(s,\mathrm{Sym}^n\pi,K)L_{\p}(s,\mathrm{Sym}^{n-2}\pi,s).
\]
Using \eqref{eqn:zero-count} to relate the arithmetic conductor of each side, we conclude by induction on $n$ that $\log q(\mathrm{Sym}^n\pi)\ll n^3\log q(\pi) \ll N^3 \log q(\pi)$.  (See also Rouse \cite[Lemma 2.1]{Rouse}.  His proof gives an implied constant depending on $[K:\Q]$, but this dependence is easily removed.)  From the shape of $L_{\infty}(s,\mathrm{Sym}^n\pi,K)$ proved by Moreno and Shahidi \cite{MS2}, it follows that
\[
\log\Big(\prod_{j=1}^{(n+1) [K:\Q]}(|\kappa_{\mathrm{Sym}^n \pi}(j)|+3)\Big)\ll n\log\Big(n\prod_{j=1}^{2 [K:\Q]}(|\kappa_{\pi}(j)|+3)\Big),
\]
and the result follows.  In the special case that $\pi$ corresponds to a newform of $\Q$ of squarefree level and trivial nebentypus, Cogdell and Michel \cite{CM} use the local Landglands correspondence to show that $\log q(\mathrm{Sym}^n \pi)=n \log q(\pi)\ll N\log q(\pi)$, which accounts for the claimed improvement.
\end{proof}
\begin{remark}
Suppose now that $\zeta_K(s)$ does have a Siegel zero.  A slight reformulation of the proof of Theorem \ref{thm:hoheisel_sato_tate} with $h=x$ yields the lower bound
\[
\sum_{x<\mathrm{N}\p\leq 2x} {\bf 1}_{I}(\cos\theta_{\p})\log\mathrm{N}\p \geq b_0\Big(\sum_{x<\mathrm{N}\p\leq 2x}\log\mathrm{N}\p-\Cr{int-ineq3}BN e^{-3\Cr{c1}\Cr{zero-free}}x-O(BN  x^{1-\frac{1}{2A}}(\log x)^3)\Big),
\]
Using the lower bound for $\sum_{x<\mathrm{N}\p\leq 2x}\log\mathrm{N}\p$ that follows from \cite[Theorem 5.2]{weiss} (a result which is independent of whether $\zeta_K(s)$ has a Siegel zero), we conclude that there exists a sufficiently large constant $\Cl[abcon]{weiss_const}$ such that
\[
\sum_{x<\mathrm{N}\p\leq 2x} {\bf 1}_{I}(\cos\theta_{\p})\log\mathrm{N}\p \geq b_0\Big(\frac{1}{([K:\Q]^{[K:\Q]}D_K)^{\Cr{weiss_const}}}x-\Cr{weiss_const}BN e^{-3\Cr{c1}\Cr{zero-free}}x-O(BN  x^{1-\frac{1}{2A}}(\log x)^3)\Big).
\]
One can now easily find an effective value of $x$ (which is at least as large as the upper bound in Theorem \ref{thm:hoheisel_sato_tate}) such that there exists an unramified $\p$ such that $\cos\theta_{\p}\in I$ and $\mathrm{N}\p\leq 2x$.  Here, $\Cr{c1}$ needs to be sufficiently large with respect to $B$, $K$, and $N$.
\end{remark}

%%%%%%%%%%%%%%%%%%%%%%%%%%%%%%%%%%%%%%%%%%%%%%%%%%%%%%%%%%%%%%%%%%%%%%%%%%%%%%%%%
\section{Proof of Theorem \ref{thm:auto_BV}}
\label{sec:auto_BV}
%%%%%%%%%%%%%%%%%%%%%%%%%%%%%%%%%%%%%%%%%%%%%%%%%%%%%%%%%%%%%%%%%%%%%%%%%%%%%%%%%

In this section, all implied constants depend at most on $\q(\pi)$.

\begin{proof}[Proof of Theorem \ref{thm:auto_BV}]

Let $\pi\in\mathcal{A}_2(\Q)$.  Let $Q^5=T\leq x^{\frac{1}{512\Cr{c1'}}},$ and suppose that $x\leq hQ$ and $\log x\leq (\log Q)^2$.  Let $\chi$ be a primitive Dirichlet character modulo $q\leq Q$. It follows from the work of Ramakrishnan and Wang \cite[Theroem A]{RW} that $L(s,(\pi\otimes\chi)\otimes\tilde{\pi},\Q)$ has a Siegel zero $\beta_{\mathrm{exc}}$ if and only if it is inherited from $L(s,\chi,\Q)$.

The proof follows \cite[Section 4]{Gallagher2}.  By arguments similar to those in the proof of Theorem \ref{effective-hoheisel}, it follows that
\[
\sum_{x<n\leq x+h}\Lambda_{\pi\otimes\tilde{\pi}}(n)\chi(n)-\delta(\chi)h+h\xi^{\beta_{\mathrm{exc}}-1}\ll h\Big(\sum_{|\gamma|\leq T}x^{\beta-1}+Q^2/T\Big),
\]
for some $\xi\in[x,x+h]$, where the summation on the right-hand side is over the nontrivial zeros of $L(s,(\pi\otimes\chi)\otimes\tilde{\pi},\Q)$ which are not $\beta_{\mathrm{exc}}$.  Thus
\begin{align}
\label{eqn:triple}
\sum_{q\leq Q}~\sideset{}{^\star}\sum_{\chi\bmod q}\Big|\sum_{x<n\leq x+h}\Lambda_{\pi\otimes\tilde{\pi}}(n)\chi(n)&-\delta(\chi)h+\delta_{q,*}(\chi)h\xi^{\beta_{\mathrm{exc}}-1}\Big|\notag\\
&\ll h\Big(\sum_{q\leq Q}~\sideset{}{^\star}\sum_{\chi\bmod q}~\sum_{|\gamma|\leq T}x^{\beta-1}+Q^4/T\Big).
\end{align}
The triple sum in \eqref{eqn:triple} is bounded by
\[
\log x\int_{\frac{1}{2}}^1 x^{\sigma-1}\sum_{q\leq Q}~\sideset{}{^\star}\sum_{\chi\bmod q}N_{(\pi\otimes\chi)\otimes\tilde{\pi}}(\sigma,T)d\sigma+x^{-1/2}\sum_{q\leq Q}~\sideset{}{^\star}\sum_{\chi\bmod q}N_{(\pi\otimes\chi)\otimes\tilde{\pi}}(1/2,T)
\]
Using Theorem \ref{thm:no_GRC} and recalling our choice of $T$, we bound the above display by
\[
\log x\int_{\frac{1}{2}}^{1-\frac{\Cr{zero-free}}{\mathcal{L}'}}x^{\frac{1}{2}(\sigma-1)}d\sigma+x^{-\frac{1}{4}}\ll x^{-\frac{\Cr{zero-free}}{2\mathcal{L}'}}+x^{-\frac{1}{4}},
\]
where $\mathcal{L}'=256\log(\q(\pi)^2 QT)$.  Since $T=Q^5$, the right-hand side of \eqref{eqn:triple} is bounded as claimed in the statement of Theorem \ref{thm:auto_BV}.
\end{proof}

\appendix
\section{$\mathrm{Sym}^n$-minorants}
We close with two lemmas on $\mathrm{Sym}^n$-minorants.  The first explicitly classifies the intervals which can be $\mathrm{Sym}^4$-minorized, i.e. those intervals we can access unconditionally for any $L(s,\pi,K)$.  The second concerns the asymptotics of the smallest $n$ needed to access the set of primes with $|\lambda_\pi(\p)|>2(1-\delta)$ as $\delta\to 0$ and obtains an improvement over the na\"ive Fourier bound.

\begin{lemma}
\label{lem:sym4-minorized}
Let $\beta_0 = \frac{1+\sqrt{7}}{6} = 0.6076\dots$ and $\beta_{1}=\frac{-1+\sqrt{7}}{6}=0.2742\dots$.  The interval $[a,b]\subseteq [-1,1]$ can be $\mathrm{Sym}^4$-minorized if and only if it satisfies one of the following conditions:
\begin{enumerate}
\item $a=-1$ and $b> -\beta_0$,
\item $-1<a\leq -\beta_0$ and $b>\frac{a+\sqrt{16a^4-11a^2+2}}{2(1-4a^2)}$,
\item $-\beta_0 \leq a \leq -\beta_{1}$ and $b> \frac{-1}{6a}$,
\item $-\beta_{1} \leq a < \beta_{1}$ and $b > \frac{a+\sqrt{16a^4-11a^2+2}}{2(1-4a^2)}$, and
\item $\beta_{1} \leq a < \beta_0$ and $b=1$.
\end{enumerate}
\end{lemma}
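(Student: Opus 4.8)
The plan is to recast $\mathrm{Sym}^4$-minorizability as a truncated moment (equivalently, nonnegative-polynomial) problem, whose answer is governed by a handful of explicit inequalities, and then to read the five cases off those inequalities. Since $U_0\equiv 1$ and $\{U_n\}_{n\ge 0}$ is orthonormal for $\mu_{\mathrm{ST}}$, the coefficient $b_0$ of a polynomial $P=\sum_{n=0}^{4}b_nU_n$ of degree $\le 4$ equals $\int_{-1}^{1}P\,d\mu_{\mathrm{ST}}$, so $I=[a,b]$ can be $\mathrm{Sym}^4$-minorized exactly when there is a real polynomial $P$ with $\deg P\le 4$, $P\le{\bf 1}_{I}$ on $[-1,1]$, and $\int_{-1}^{1}P\,d\mu_{\mathrm{ST}}>0$. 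Because $P$ is continuous, the constraint $P\le{\bf 1}_I$ forces $P(a)\le 0$ when $a>-1$ and $P(b)\le 0$ when $b<1$; thus it is equivalent to $P\le{\bf 1}_J$, where the ``effective interior'' $J$ equals $(a,b)$, $[-1,b)$, $(a,1]$ or $[-1,1]$ according to which (if any) of $a=-1$, $b=1$ hold. Writing $q=-P$, we conclude: $I$ is $\mathrm{Sym}^4$-minorizable if and only if there is a polynomial $q$ of degree $\le 4$ with $q\ge 0$ on $K:=[-1,1]\setminus J$ and $\int_{-1}^{1}q\,d\mu_{\mathrm{ST}}<0$.

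Thus $I$ fails to be minorizable precisely when the linear functional $q\mapsto\int q\,d\mu_{\mathrm{ST}}$ is nonnegative on every degree-$\le 4$ polynomial that is nonnegative on $K$. For $K=[-1,a]\cup[b,1]=\{t\in[-1,1]\colon 1-t^2\ge 0,\ (t-a)(t-b)\ge 0\}$, classical representations of nonnegative polynomials on a union of intervals express such $q$ in terms of sums of squares multiplied by $1$, $1-t^2$, $(t-a)(t-b)$, and $(1-t^2)(t-a)(t-b)$, so the functional being nonnegative there is equivalent to positive semidefiniteness of the associated (small) moment and localizing matrices evaluated at the moments $(1,0,\tfrac14,0,\tfrac18)$ of $\mu_{\mathrm{ST}}$. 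The $3\times 3$ Hankel matrix and the $2\times 2$ localizing matrix of $1-t^2$ are unconditionally positive definite; the localizing matrix of $(t-a)(t-b)$ has entries $\tfrac14+ab$, $-\tfrac14(a+b)$, $-\tfrac14(a+b)$, $\tfrac18+\tfrac14ab$, and the localizing scalar of $(1-t^2)(t-a)(t-b)$ equals $\tfrac18+\tfrac34ab$. Their nonnegativity reduces, after simplification, to the two conditions
\[
6ab+1\ge 0 \qquad\text{and}\qquad 4a^2b^2+ab-a^2-b^2+\tfrac12\ge 0,
\]
the first of which subsumes the diagonal condition $4ab+1\ge 0$. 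For the endpoint shapes $J=[-1,b)$ or $J=(a,1]$ one repeats this with $K=[b,1]$ or $K=[-1,a]$; the localizing matrix of $(t-b)(1-t)$, respectively $(t+1)(a-t)$, yields the conditions $6b^2+2b-1\ge 0$ and $b\le-\tfrac14$, respectively $6a^2-2a-1\ge 0$ and $a\ge\tfrac14$, i.e.\ $b\le-\beta_0$, respectively $a\ge\beta_0$, since $\beta_0$ and $-\beta_1$ are the roots of $6t^2-2t-1$.

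It then only remains to rewrite the complement of these conditions as the five stated families. The curve $6ab+1=0$ is $b=-1/(6a)$; regarded as a quadratic in $b$, $4a^2b^2+ab-a^2-b^2+\tfrac12=0$ has discriminant $16a^4-11a^2+2$, which is positive for all real $a$, and roots $b=\bigl(a\pm\sqrt{16a^4-11a^2+2}\bigr)/\bigl(2(1-4a^2)\bigr)$. Using $6\beta_0^2-2\beta_0-1=0=6\beta_1^2+2\beta_1-1$ and $\beta_0\beta_1=\tfrac16$ one checks that $-1/(6a)$ and the relevant branch of the conic agree at $a=-\beta_0$ (value $\beta_1$) and at $a=-\beta_1$ (value $\beta_0$), that the conic branch equals $1$ at $a=\beta_1$, and that each inequality is the binding one on exactly the $a$-ranges appearing in the statement; tracking this as $a$ passes through $-1,-\beta_0,-\beta_1,\beta_1,\beta_0$ gives conditions (1)–(5) (the symmetry $[a,b]\leftrightarrow[-b,-a]$, valid since $U_n(-t)=(-1)^nU_n(t)$ and $\mu_{\mathrm{ST}}$ is even, halves the work).

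The main obstacle is the passage in the first two paragraphs: the positive-semidefiniteness conditions are clearly \emph{necessary} for $q\mapsto\int q\,d\mu_{\mathrm{ST}}$ to be nonnegative on the cone in question, but establishing \emph{sufficiency} requires knowing that the relevant nonnegative polynomials genuinely admit the above representations in degree $\le 4$, equivalently that the moment vector $(1,0,\tfrac14,0,\tfrac18)$ is actually represented by a positive measure on $K$ whenever the inequalities hold. I would settle this by exhibiting those representing measures explicitly: for $a\notin[-\beta_0,-\beta_1]$ (and its reflection) a three-atom quadrature rule exact on polynomials of degree $\le 4$ with atoms at $a$, $b$ and $-(a+b)/(1+4ab)$, and — in the delicate range $-\beta_0\le a\le-\beta_1$, where that third atom escapes $[-1,1]$ and the closed form $\bigl(a+\sqrt{16a^4-11a^2+2}\bigr)/\bigl(2(1-4a^2)\bigr)$ degenerates at $a=\pm\tfrac12$ — a four-atom rule with atoms at $-1,a,b,1$, whose weights are positive exactly on $6ab+1\le 0$; it is this replacement that produces case (3). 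Dually, when the inequalities fail one must produce an explicit admissible minorant $P$ with positive $\mu_{\mathrm{ST}}$-mass, which follows by taking $-P$ to be the corresponding nonnegative-polynomial certificate. Once these constructions are in hand the remainder is elementary manipulation of quadratics.
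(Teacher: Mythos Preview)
Your approach is correct and genuinely different from the paper's. The paper proves only sufficiency, exhibiting for each case an explicit quartic $F$ nonpositive off $[a,b]$ with $\int F\,d\mu_{\mathrm{ST}}>0$, and declares necessity to be ``tedious casework, which we omit.'' Your moment-problem reformulation instead derives the thresholds systematically as the vanishing loci of the localizing scalar $(1+6ab)/8$ for $(1-t^2)(t-a)(t-b)$ and the $2\times2$ localizing determinant for $(t-a)(t-b)$, and treats both directions at once. The two viewpoints are exact duals: when the localizing scalar is negative your SOS certificate is $q=(1-t^2)(t-a)(t-b)$, so $P=-q=(t-1)(t+1)(t-a)(t-b)$ is precisely the paper's case-(3) minorant; when the $2\times2$ localizing determinant is negative, its null vector produces the squared linear factor $\bigl(t+\tfrac{a+b}{1+4ab}\bigr)^2$, recovering the paper's cases (2) and (4). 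Conversely, your quadrature atoms are the roots of the paper's polynomials. What your route buys is an explanation of \emph{where} the thresholds come from and a uniform mechanism for necessity; what the paper's buys is that sufficiency is a two-line verification per case.

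One point needs tightening. Your three-atom rule at $a,b,c=-(a+b)/(1+4ab)$ matches the first five moments of $\mu_{\mathrm{ST}}$ only on the conic $4a^2b^2+ab-a^2-b^2+\tfrac12=0$: orthogonality of the node polynomial $(t-a)(t-b)(t-c)$ to constants forces your value of $c$, but orthogonality to $t$ then forces the conic relation. Likewise the four-atom rule at $\pm1,a,b$ is exact to degree $4$ only when $\int(1-t^2)(t-a)(t-b)\,d\mu_{\mathrm{ST}}=0$, i.e.\ on $6ab+1=0$. So these representing measures exist only on the \emph{boundary} of the non-minorizable region, not throughout it as your phrasing suggests. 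This is harmless once you invoke monotonicity --- if $[a,b]$ is not $\mathrm{Sym}^4$-minorizable then neither is any subinterval, so a representing measure supported in $[-1,a]\cup[b^\ast(a),1]$ for the critical $b^\ast(a)$ also lies in $[-1,a]\cup[b,1]$ for every $b\le b^\ast(a)$ --- but you should state this explicitly. You also still owe the check that on each boundary arc the quadrature weights are nonnegative and (for the three-atom rule) that $c\in[-1,a]\cup[b,1]$; it is exactly the failure of the latter on $a\in[-\beta_0,-\beta_1]$ that forces the switch to the four-atom rule, with $c$ hitting $\pm1$ at the endpoints $a=-\beta_0,-\beta_1$.
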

\begin{proof}
We begin with sufficiency.  For each case, we list a polynomial $F(x)$ which, for $x\in[-1,1]$, is positive only if $x\in[a,b]$.  We then compute
\[
b_0(F):=\int_{-1}^1 Fd\mu_{ST}
\]
and verify that it is positive.  This is sufficient, since any such $F(x)$ can be scaled to minorize the indicator function.

\begin{enumerate}
\item $F(x)=(x-1)(x-b)(x-\beta_{1})^2$ and $b_0(F)=(b+\beta_0)(\frac{14+\sqrt{7}}{36})$.
\item $F(x)=-(x-a)(x-b)\Big(x+\frac{a+b}{4ab+1}\Big)^2$ and $b_0(F)=\frac{(1-4a^2)b^2-ab+a^2-1/2}{4(4ab+1)}$.
\item $F(x)=(x-1)(x+1)(x-a)(x-b)$ and $b_0(F)=-\frac{3}{4}(ab+\frac{1}{6})$.
\item $F(x)=-(x-a)(x-b)\Big(x+\frac{a+b}{4ab+1}\Big)^2$.
\item $F(x)=(x+1)(x-a)(x+\beta_{1})^2$ and $b_0(F)=(\beta_0-a)(\frac{14+\sqrt{7}}{36})$.
\end{enumerate}

The proof of necessity necessarily involves tedious casework, which we omit.  Let us say only that we consider polynomials $F(x)$, ordered by degree, the number of real roots, and the placement of those roots relative to $a,b,1,$ and $-1$, and in each case we determine conditions under which $b_0(F)>0$.
\end{proof}

\begin{lemma}
\label{lem:extreme-values}
If $n\geq1$, then the set $[-1,-a]\cup[a,1]$ can be $\mathrm{Sym}^{2n}$-minorized if $a < \sqrt{1-\frac{3/2}{n+1}}$.
\end{lemma}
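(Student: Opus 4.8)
The plan is to produce, for every $a$ with $a<\sqrt{1-\tfrac{3/2}{n+1}}$, an explicit even polynomial of degree $2n$ that minorizes $\mathbf{1}_{I}$ on $[-1,1]$, where $I=[-1,-a]\cup[a,1]$, and whose expansion in the $\mu_{\mathrm{ST}}$-orthonormal basis $\{U_m\}$ has positive constant term. Since $I$ is symmetric about $0$, I would look for $F$ of the form $F(t)=(t^2-a^2)G(t)^2$ with $\deg G\le n-1$, so that $\deg F\le 2n$, $F\le 0$ on $[-a,a]=[-1,1]\setminus I$, and $F\ge 0$ on $I$. For a sufficiently small $\varepsilon>0$ the polynomial $\varepsilon F$ satisfies $\varepsilon F\le 1$ on $[-1,1]$, hence $\varepsilon F\le\mathbf{1}_{I}$ on all of $[-1,1]$; writing $\varepsilon F=\sum_{m=0}^{2n}b_mU_m$, the requirement $b_0>0$ becomes $b_0=\langle\varepsilon F,U_0\rangle_{\mu_{\mathrm{ST}}}=\varepsilon\int_{-1}^1(t^2-a^2)G(t)^2\,d\mu_{\mathrm{ST}}(t)>0$ (using $U_0=1$). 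Thus it suffices to exhibit a polynomial $G$ of degree at most $n-1$ with
\[
\frac{\int_{-1}^1(1-t^2)G(t)^2\,d\mu_{\mathrm{ST}}(t)}{\int_{-1}^1 G(t)^2\,d\mu_{\mathrm{ST}}(t)}\le\frac{3}{2(n+1)},
\]
since then $\int t^2G^2\,d\mu_{\mathrm{ST}}\big/\int G^2\,d\mu_{\mathrm{ST}}\ge 1-\tfrac{3/2}{n+1}>a^2$.

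The evaluation of this ratio is clean in the basis $\{U_m\}$. From the recursion $tU_m=\tfrac12(U_{m+1}+U_{m-1})$ (with $U_{-1}:=0$) one gets $(1-t^2)U_0=\tfrac34U_0-\tfrac14U_2$ and $(1-t^2)U_m=\tfrac12U_m-\tfrac14U_{m+2}-\tfrac14U_{m-2}$ for $m\ge 1$; since $\{U_m\}$ is $\mu_{\mathrm{ST}}$-orthonormal this gives $\langle(1-t^2)U_j,U_k\rangle_{\mu_{\mathrm{ST}}}=\tfrac34,\ \tfrac12,\ -\tfrac14$ according as $j=k=0$, $j=k\ge1$, or $|j-k|=2$, and $0$ otherwise. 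I would take $G=\sum_{j\in\mathcal J}U_j$ with $\mathcal J=\{\,0\le j\le n-1:\ j\equiv n-1\ (\mathrm{mod}\ 2)\,\}$, an arithmetic progression of common difference $2$. Then $\int G^2\,d\mu_{\mathrm{ST}}=\#\mathcal J$, while in $\langle(1-t^2)G,G\rangle_{\mu_{\mathrm{ST}}}=\sum_{j,k\in\mathcal J}\langle(1-t^2)U_j,U_k\rangle_{\mu_{\mathrm{ST}}}$ the $\#\mathcal J-1$ consecutive pairs each contribute $-\tfrac12$, which telescopes against all but one unit of the diagonal, leaving $\langle(1-t^2)G,G\rangle_{\mu_{\mathrm{ST}}}=\tfrac34$ when $0\in\mathcal J$ and $\tfrac12$ when $0\notin\mathcal J$.

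If $n$ is odd then $0\in\mathcal J$ and $\#\mathcal J=\tfrac{n+1}{2}$, so the ratio is exactly $\tfrac{3}{2(n+1)}$; if $n$ is even then $0\notin\mathcal J$ and $\#\mathcal J=\tfrac n2$, so the ratio is $\tfrac1n$, and $\tfrac1n\le\tfrac{3}{2(n+1)}$ for every $n\ge2$. (For $n=1$ one simply takes $G\equiv 1$, with ratio $\tfrac34=\tfrac{3}{2(n+1)}$.) In all cases the displayed inequality holds, so the construction of the first paragraph yields the required $\mathrm{Sym}^{2n}$-minorant of $\mathbf{1}_{I}$, proving the lemma. I do not expect a genuine obstacle here; the points requiring care are the rescaling by $\varepsilon$ (so that $F$ becomes a legitimate pointwise minorant of $\mathbf{1}_{I}$, not merely positive on $I$) and the choice of the parity class of $\mathcal J$, which is exactly what makes the off-diagonal $-\tfrac14$ terms telescope and delivers the bound $\tfrac{3}{2(n+1)}$ rather than the slightly weaker $\tfrac{3}{2n}$ coming from the even-index progression when $n$ is even. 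One could of course take $G$ to be the true minimizing eigenvector and obtain a stronger bound, but the simple choice above already suffices for the stated result.
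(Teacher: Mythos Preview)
Your argument is correct, but it takes a different route from the paper's.  The paper simply chooses $G(t)=t^{n-1}$, i.e.\ the minorant $f_{n,a}(t)=(t^2-a^2)t^{2n-2}$, and evaluates $\int_{-1}^{1}f_{n,a}\,d\mu_{\mathrm{ST}}$ using the moment formula $\int_{-1}^{1}t^{2m}\,d\mu_{\mathrm{ST}}=C_m/4^m$ with $C_m$ the $m$-th Catalan number; a one-line computation of the ratio $C_n/(4C_{n-1})=(2n-1)/(2(n+1))$ then gives $\int f_{n,a}\,d\mu_{\mathrm{ST}}=\frac{C_{n-1}}{4^{n-1}}\bigl(1-a^2-\tfrac{3}{2(n+1)}\bigr)$, yielding the same threshold $\tfrac{3}{2(n+1)}$ you obtain.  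Your approach instead expands in the Chebyshev basis, exploits the tridiagonal structure of multiplication by $1-t^2$, and picks $G=\sum_{j\in\mathcal J}U_j$ so that the off-diagonal $-\tfrac14$'s telescope against the diagonal $\tfrac12$'s.  It is a pleasant coincidence that your $G$ and the paper's $G=t^{n-1}$, though different polynomials for $n\ge 5$, give exactly the same Rayleigh quotient.  The paper's version is shorter and needs no rescaling step (since $|f_{n,a}|\le 1$ on $[-1,1]$ automatically); your version has the virtue of making transparent \emph{why} the bound has the shape $3/(2(n+1))$ via the spectral picture, and your remark about optimizing $G$ further points toward a sharper result.
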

\begin{proof}
We recall the well-known fact that
\[
\int_{-1}^1 x^{2m} \,d\mu_{\mathrm{ST}} = \frac{1}{m+1}{2m\choose m} =: C_m.
\]
Given $n$ and $a$ satisfying the conditions of the lemma, we use the minorant $f_{n,a}(x)=(x^2-a^2)x^{2n-2}$, and we find that
\[
\int_{-1}^1 f_{n,a} \,d\mu_{\mathrm{ST}} = \frac{C_{n-1}}{4^{n-1}} \Big(1-a^2-\frac{3/2}{n+1}\Big).
\]
\end{proof}

\begin{remark}
The Sato-Tate measures of the sets considered in Lemma \ref{lem:extreme-values} satisfy $\mu^{-1} \gg n^{3/2}$, so the minorants in the proof provide a significant improvement over those arising from a na\"ive Fourier approximation.
\end{remark}

\bibliographystyle{abbrv}
\bibliography{GeneralizedLinnik}

\def\cprime{$'$}
\begin{thebibliography}{10}

\bibitem{AT}
A.~Akbary and T.~S. Trudgian.
\newblock A {L}og-{F}ree {Z}ero-{D}ensity {E}stimate and {S}mall {G}aps in
  {C}oefficients of {$L$}-{F}unctions.
\newblock {\em Int. Math. Res. Not. IMRN}, (12):4242--4268, 2015.

\bibitem{BO}
A.~Balog and K.~Ono.
\newblock The {C}hebotarev density theorem in short intervals and some
  questions of {S}erre.
\newblock {\em J. Number Theory}, 91(2):356--371, 2001.

\bibitem{BGG}
T.~Barnet-Lamb, T.~Gee, and D.~Geraghty.
\newblock The {S}ato-{T}ate conjecture for {H}ilbert modular forms.
\newblock {\em J. Amer. Math. Soc.}, 24(2):411--469, 2011.

\bibitem{Sato-Tate}
T.~Barnet-Lamb, D.~Geraghty, M.~Harris, and R.~Taylor.
\newblock A family of {C}alabi-{Y}au varieties and potential automorphy {II}.
\newblock {\em Publ. Res. Inst. Math. Sci.}, 47(1):29--98, 2011.

\bibitem{Blasius}
D.~Blasius.
\newblock Hilbert modular forms and the {R}amanujan conjecture.
\newblock In {\em Noncommutative geometry and number theory}, Aspects Math.,
  E37, pages 35--56. Vieweg, Wiesbaden, 2006.

\bibitem{Bombieri}
E.~Bombieri.
\newblock On the large sieve.
\newblock {\em Mathematika}, 12:201--225, 1965.

\bibitem{Brumley}
F.~Brumley.
\newblock Effective multiplicity one on {${\rm GL}_N$} and narrow zero-free
  regions for {R}ankin-{S}elberg {$L$}-functions.
\newblock {\em Amer. J. Math.}, 128(6):1455--1474, 2006.

\bibitem{ClozelThorne}
L.~Clozel and J.~Thorne.
\newblock Level-raising and symmetric power functoriality, iii.
\newblock {\em preprint}.

\bibitem{CM}
J.~Cogdell and P.~Michel.
\newblock On the complex moments of symmetric power {$L$}-functions at {$s=1$}.
\newblock {\em Int. Math. Res. Not.}, (31):1561--1617, 2004.

\bibitem{Davenport}
H.~Davenport.
\newblock {\em Multiplicative number theory}, volume~74 of {\em Graduate Texts
  in Mathematics}.
\newblock Springer-Verlag, New York-Berlin, second edition, 1980.
\newblock Revised by Hugh L. Montgomery.

\bibitem{Deligne}
P.~Deligne.
\newblock La conjecture de {W}eil. {I}.
\newblock {\em Inst. Hautes \'Etudes Sci. Publ. Math.}, (43):273--307, 1974.

\bibitem{Fogels}
E.~Fogels.
\newblock On the zeros of {$L$}-functions.
\newblock {\em Acta Arith}, 11:67--96, 1965.

\bibitem{Gallagher2}
P.~X. Gallagher.
\newblock A large sieve density estimate near {$\sigma =1$}.
\newblock {\em Invent. Math.}, 11:329--339, 1970.

\bibitem{GJ}
S.~Gelbart and H.~Jacquet.
\newblock A relation between automorphic representations of {${\rm GL}(2)$} and
  {${\rm GL}(3)$}.
\newblock {\em Ann. Sci. \'Ecole Norm. Sup. (4)}, 11(4):471--542, 1978.

\bibitem{MR2078295}
S.~S. Gelbart, E.~M. Lapid, and P.~Sarnak.
\newblock A new method for lower bounds of {$L$}-functions.
\newblock {\em C. R. Math. Acad. Sci. Paris}, 339(2):91--94, 2004.

\bibitem{MR3434887}
L.~Greni{\'e} and G.~Molteni.
\newblock Explicit versions of the prime ideal theorem for {D}edekind zeta
  functions under {GRH}.
\newblock {\em Math. Comp.}, 85(298):889--906, 2016.

\bibitem{Heath-Brown1988}
D.~R. Heath-Brown.
\newblock The number of primes in a short interval.
\newblock {\em J. Reine Angew. Math.}, 389:22--63, 1988.

\bibitem{Hoffstein}
J.~Hoffstein and D.~Ramakrishnan.
\newblock Siegel zeros and cusp forms.
\newblock {\em Internat. Math. Res. Notices}, (6):279--308, 1995.

\bibitem{Hoheisel}
G.~Hoheisel.
\newblock Primzahl probleme in der {A}nalysis.
\newblock {\em S.-B. Preuss. Akad. Wiss.}, 8:580--588, 1930.

\bibitem{Huxley}
M.~N. Huxley.
\newblock On the difference between consecutive primes.
\newblock {\em Invent. Math.}, 15:164--170, 1972.

\bibitem{IK}
H.~Iwaniec and E.~Kowalski.
\newblock {\em Analytic number theory}, volume~53 of {\em American Mathematical
  Society Colloquium Publications}.
\newblock American Mathematical Society, Providence, RI, 2004.

\bibitem{Jutila}
M.~Jutila.
\newblock A statistical density theorem for {$L$}-functions with applications.
\newblock {\em Acta Arith.}, 16:207--216, 1969/1970.

\bibitem{Kim}
H.~H. Kim.
\newblock Functoriality for the exterior square of {${\rm GL}_4$} and the
  symmetric fourth of {${\rm GL}_2$}.
\newblock {\em J. Amer. Math. Soc.}, 16(1):139--183, 2003.
\newblock With appendix 1 by Dinakar Ramakrishnan and appendix 2 by Kim and
  Peter Sarnak.

\bibitem{KS1}
H.~H. Kim and F.~Shahidi.
\newblock Cuspidality of symmetric powers with applications.
\newblock {\em Duke Math. J.}, 112(1):177--197, 2002.

\bibitem{KS2}
H.~H. Kim and F.~Shahidi.
\newblock Functorial products for {${\rm GL}_2\times{\rm GL}_3$} and the
  symmetric cube for {${\rm GL}_2$}.
\newblock {\em Ann. of Math. (2)}, 155(3):837--893, 2002.
\newblock With an appendix by Colin J. Bushnell and Guy Henniart.

\bibitem{KM}
E.~Kowalski and P.~Michel.
\newblock Zeros of families of automorphic {$L$}-functions close to 1.
\newblock {\em Pacific J. Math.}, 207(2):411--431, 2002.

\bibitem{Linnik}
U.~V. Linnik.
\newblock On the least prime in an arithmetic progression.
\newblock {\em Rec. Math. [Mat. Sbornik] N.S.}, 15(57):139--178,347--368, 1944.

\bibitem{LRS}
W.~Luo, Z.~Rudnick, and P.~Sarnak.
\newblock On the generalized {R}amanujan conjecture for {${\rm GL}(n)$}.
\newblock In {\em Automorphic forms, automorphic representations, and
  arithmetic ({F}ort {W}orth, {TX}, 1996)}, volume~66 of {\em Proc. Sympos.
  Pure Math.}, pages 301--310. Amer. Math. Soc., Providence, RI, 1999.

\bibitem{Mitsui}
T.~Mitsui.
\newblock On the prime ideal theorem.
\newblock {\em J. Math. Soc. Japan}, 20:233--247, 1968.

\bibitem{MR0249375}
H.~L. Montgomery.
\newblock Zeros of {$L$}-functions.
\newblock {\em Invent. Math.}, 8:346--354, 1969.

\bibitem{Montgomery10}
H.~L. Montgomery.
\newblock {\em Ten lectures on the interface between analytic number theory and
  harmonic analysis}, volume~84 of {\em CBMS Regional Conference Series in
  Mathematics}.
\newblock Published for the Conference Board of the Mathematical Sciences,
  Washington, DC; by the American Mathematical Society, Providence, RI, 1994.

\bibitem{MR0374060}
H.~L. Montgomery and R.~C. Vaughan.
\newblock The large sieve.
\newblock {\em Mathematika}, 20:119--134, 1973.

\bibitem{Moreno}
C.~J. Moreno.
\newblock The {H}oheisel phenomenon for generalized {D}irichlet series.
\newblock {\em Proc. Amer. Math. Soc.}, 40:47--51, 1973.

\bibitem{Moreno2}
C.~J. Moreno.
\newblock Analytic proof of the strong multiplicity one theorem.
\newblock {\em Amer. J. Math.}, 107(1):163--206, 1985.

\bibitem{MS2}
C.~J. Moreno and F.~Shahidi.
\newblock The {$L$}-functions {$L(s,{\rm Sym}^m(r),\pi)$}.
\newblock {\em Canad. Math. Bull.}, 28(4):405--410, 1985.

\bibitem{Moret-Bailly}
L.~Moret-Bailly.
\newblock Groupes de {P}icard et probl\`emes de {S}kolem. {I}, {II}.
\newblock {\em Ann. Sci. \'Ecole Norm. Sup. (4)}, 22(2):161--179, 181--194,
  1989.

\bibitem{Motohashi}
Y.~Motohashi.
\newblock On sums of {H}ecke-{M}aass eigenvalues squared over primes in short
  intervals.
\newblock {\em J. Lond. Math. Soc. (2)}, 91(2):367--382, 2015.

\bibitem{MS}
W.~M{\"u}ller and B.~Speh.
\newblock Absolute convergence of the spectral side of the {A}rthur trace
  formula for {${\rm GL}_n$}.
\newblock {\em Geom. Funct. Anal.}, 14(1):58--93, 2004.
\newblock With an appendix by E. M. Lapid.

\bibitem{MR1792292}
D.~Ramakrishnan.
\newblock Modularity of the {R}ankin-{S}elberg {$L$}-series, and multiplicity
  one for {${\rm SL}(2)$}.
\newblock {\em Ann. of Math. (2)}, 152(1):45--111, 2000.

\bibitem{RW}
D.~Ramakrishnan and S.~Wang.
\newblock On the exceptional zeros of {R}ankin-{S}elberg {$L$}-functions.
\newblock {\em Compositio Math.}, 135(2):211--244, 2003.

\bibitem{Rouse}
J.~Rouse.
\newblock Atkin-{S}erre type conjectures for automorphic representations on
  {${\rm GL}(2)$}.
\newblock {\em Math. Res. Lett.}, 14(2):189--204, 2007.

\bibitem{SerreLectures}
J.-P. Serre.
\newblock {\em Lectures on {$N_X (p)$}}, volume~11 of {\em Chapman \& Hall/CRC
  Research Notes in Mathematics}.
\newblock CRC Press, Boca Raton, FL, 2012.

\bibitem{Shahidi}
F.~Shahidi.
\newblock Symmetric power {$L$}-functions for {${\rm GL}(2)$}.
\newblock In {\em Elliptic curves and related topics}, volume~4 of {\em CRM
  Proc. Lecture Notes}, pages 159--182. Amer. Math. Soc., Providence, RI, 1994.

\bibitem{Silverman}
J.~H. Silverman.
\newblock {\em The arithmetic of elliptic curves}, volume 106 of {\em Graduate
  Texts in Mathematics}.
\newblock Springer, Dordrecht, second edition, 2009.

\bibitem{Turan}
V.~T. S{\'o}s and P.~Tur{\'a}n.
\newblock On some new theorems in the theory of {D}iophantine approximations.
\newblock {\em Acta Math. Acad. Sci. Hungar.}, 6:241--255, 1955.

\bibitem{weiss}
A.~Weiss.
\newblock The least prime ideal.
\newblock {\em J. Reine Angew. Math.}, 338:56--94, 1983.

\bibitem{Xylouris}
T.~Xylouris.
\newblock {\em \"{U}ber die {N}ullstellen der {D}irichletschen {L}-{F}unktionen
  und die kleinste {P}rimzahl in einer arithmetischen {P}rogression}.
\newblock Bonner Mathematische Schriften [Bonn Mathematical Publications], 404.
  Universit\"at Bonn, Mathematisches Institut, Bonn, 2011.
\newblock Dissertation for the degree of Doctor of Mathematics and Natural
  Sciences at the University of Bonn, Bonn, 2011.

\end{thebibliography}
\end{document}